\theoremstyle{plain}
\newtheorem{thm}[subsubsection]{Theorem}
\newtheorem{lem}[subsubsection]{Lemma}
\newtheorem{prop}[subsubsection]{Proposition}
\newtheorem{cor}[subsubsection]{Corollary}
\newtheorem{ex}[subsubsection]{Example}
\theoremstyle{definition}
\newtheorem{rem}[subsubsection]{Remark}
\newtheorem{defn}[subsubsection]{Definition}
\newcommand\brem{\begin{rem}\begin{sffamily}\begin{upshape}}
\newcommand\erem{\end{upshape}\end{sffamily}\end{rem}}
\newcommand\bdefn{\begin{defn}\begin{rm}}
\newcommand\edefn{\end{rm}\hfill$\Box$\end{defn}}
\newcommand\bex{\begin{ex}\begin{rm}}
\newcommand\eex{\end{rm}\hfill$\Box$\end{ex}}
\newenvironment{Proof}{%
\par\noindent{\scshape Proof:}\begin{rm}}{\hfill$\Box$\end{rm}\newline}
\newcommand{\es}{\emptyset}
\newcommand{\bN}{\mathbb{N}}
\newcommand{\ga}{\alpha}
\newcommand{\gb}{\beta}
\newcommand{\gc}{\gamma}
\newcommand{\gi}{\iota}
\newcommand{\gt}{\theta}
\newcommand{\disjunion}{\,\,\dot{\cup}\,\,}
\newcommand{\NN}{\bN^2}
\newcommand{\gB}{\ol{\gb}\times\gb}
\newcommand{\BBZ}{\gB}
\newcommand{\pr}{R}
\newcommand{\qr}{S}
\newcommand{\leqT}{\leq}
\newcommand{\gtT}{>}
\newcommand{\ltT}{<}
\newcommand{\ol}{\overline}
\newcommand{\la}[1]{\stackrel{#1}{\leftarrow}}
\newcommand{\ra}[1]{\stackrel{#1}{\rightarrow}}
\newcommand{\abc}{_{\ga,\gb}^\gc}
\newcommand\field{\mathfrak{k}}
\newcommand\id{I(d)}
\newcommand\modV{(V)}
\newcommand\misd{\mathfrak{M}_d\modV}
\newcommand\miso{\misd}
\newcommand\affinev{\mathbb{A}\modv}
\newcommand\ortho{\ensuremath{\mathfrak{O}}}
\newcommand\dortho{\ensuremath{\mathfrak{A}}}
\newcommand\pos{\mathfrak{N}(v)}
\newcommand\roots{\mathfrak{R}(v)}
\newcommand\posb{\mathfrak{N}(\gb)}
\newcommand\rootsb{\mathfrak{R}(\gb)}
\newcommand\rootsbstar{\mathfrak{R}(\gb^*)}
\newcommand\rootsvstar{\mathfrak{R}(v^*)}
\newcommand\modv{}
\newcommand\st{\,|\,}
\newcommand\posv{\ortho\pos\modv}
\newcommand\rootsv{\ortho\roots\modv}
\newcommand\andposv{\pos\modv}
\newcommand\androotsv{\roots\modv}
\newcommand\dposv{\dortho\pos\modv}
\newcommand\drootsv{\dortho\roots\modv}
\newcommand\orootsb{\ortho\rootsb}
\newcommand\drootsb{\dortho\rootsb\modv}
\newcommand\diag{\mathfrak{d}}
\newcommand\diagv{\diag^{v}}
\newcommand\diagb{\diag^{\gb}}
\newcommand\hash{\#}
\newcommand{\mon}{\mathfrak{S}}
\newcommand\path{\Lambda}
\newcommand\init{\textup{in}_{\torder}}
\newcommand\torder{\vartriangleright}
\newcommand\card\#
\numberwithin{equation}{subsection}
\numberwithin{figure}{subsection}
\title{Initial ideals of tangent cones to Richardson varieties in the Orthogonal Grassmannian via a Orthogonal-Bounded-RSK-Correspondence}
\date{}
\author {Shyamashree Upadhyay}
\begin{document}
\maketitle

\begin{abstract}
A Richardson variety $X_\ga^\gc$ in the Orthogonal Grassmannian is defined
to be the intersection of a Schubert variety $X^\gc$ in the Orthogonal Grassmannian and
a opposite Schubert variety $X_\ga$ therein.  We give an explicit description of the initial ideal (with respect to certain conveniently chosen term order) for the ideal of the tangent cone at any $T$-fixed point of
$X_\ga^\gc$, thus generalizing a result of Raghavan-Upadhyay
\cite{Ra-Up2}. Our proof is based on a generalization of the Robinson-Schensted-Knuth (RSK)
correspondence, which we call the Orthogonal bounded RSK (OBRSK). The OBRSK correspondence will give a degree-preserving bijection
between a set of monomials defined by the initial ideal of the ideal of the tangent cone (as mentioned above) and a
`standard monomial basis'. A similar work for Richardson varieties in the ordinary Grassmannian was done by Kreiman in \cite{Kr-bkrs}.  
\end{abstract}

\tableofcontents

\section{Introduction}

The Orthogonal Grassmannian is as defined in \S 1.1 of \cite{sthesis}. A Richardson variety $X_\ga^\gc$ in the Orthogonal
Grassmannian\footnote{Richardson varieties in the ordinary Grassmannian are
also studied by Stanley in \cite{St}, where these varieties are
called {\it skew Schubert varieties}.  Discussion of these
varieties in the ordinary Grassmannian also appears in \cite{Ho-Pe}.} is defined to be the
intersection of a Schubert variety $X^\gc$ in the Orthogonal Grassmannian with a opposite Schubert
variety $X_\ga$ therein.  In particular, Schubert and opposite Schubert
varieties are special cases of Richardson varieties.  In this paper, we provide an explicit description of the initial ideal (with respect to certain conveniently chosen term order) for the ideal of the tangent cone at any $T$-fixed point $e_\gb$ of
$X_\ga^\gc$. It should be noted that the local properties of Schubert varieties at
$T$-fixed points determine their local properties at all other
points, because of the $B$-action; but this does not extend to
Richardson varieties, since Richardson varieties only have a
$T$-action.

In Kodiyalam-Raghavan \cite{Ko-Ra} and Kreiman-Lakshmibai
\cite{Kr-La}, an explicit Gr\"obner basis for the ideal of the
tangent cone of the Schubert variety $X^\gc$ (in the ordinary Grassmannian) at any torus fixed point $e_\gb$ is
obtained. In Raghavan-Upadhyay \cite{Ra-Up2}, an explicit description of the initial ideal (with respect to certain conveniently chosen term orders) for the ideal of the tangent cone at any $T$-fixed point of a Schubert variety in the Orthogonal Grassmannian has been obtained. In this paper, we generalize the result of \cite{Ra-Up2} to the case of Richardson varieties in the Orthogonal Grassmannian.

Sturmfels \cite{Stu} and Herzog-Trung \cite{He-Tr} proved results
on a class of determinantal varieties which are equivalent to the
results of \cite{Ko-Ra}, \cite{Kr-La}, and \cite{Kr-bkrs} for the case
of Schubert varieties (in the ordinary Grassmannian) at the $T$-fixed point $e_{id}$. The key to
their proofs was to use a version of the Robinson-Schensted-Knuth
correspondence (which we shall call the `ordinary' RSK)
in order to establish a degree-preserving bijection
between a set of monomials defined by an initial ideal and a
`standard monomial basis'. The difficulty in generalizing this method of proof to the case of
Schubert varieties (in the ordinary Grassmannian) at an arbitrary $T$-fixed point $e_\gb$ lies in
generalizing this bijection. All three of \cite{Ko-Ra},
\cite{Kr-La}, and \cite{Kr-bkrs} obtain generalizations of this
bijection; the generalization in \cite{Kr-bkrs} is slightly more general, since it
applies to Richardson varieties, and not just to Schubert varieties. These three generalizations, when restricted to Schubert varieties in the ordinary Grassmannian, are in fact the same bijection\footnote{This supports the
conviction of the authors in \cite{Ko-Ra} that this bijection is
natural and that it is in some sense the only natural bijection
satisfying the required geometric conditions.}, although this is
not immediately apparent. In \cite{Ko-Ra} and \cite{Kr-La}, this `generalized bijection' is not viewed as a generalization of the `ordinary' RSK correspondence. It is only in the work of Kreiman in \cite{Kr-bkrs}, where this `generalized bijection' has been viewed as a generalization of the `ordinary' RSK correspondence, which he calls the Bounded-RSK correspondence. Although the formulations of the
bijections in \cite{Ko-Ra} and \cite{Kr-La} are similar to
eachother, the formulation of the bijection in \cite{Kr-bkrs} is in terms of different combinatorial
indexing sets. The relationship between the formulation in \cite{Kr-bkrs} and the
formulations in \cite{Ko-Ra} and \cite{Kr-La} is analogous to the
relationship between the Robinson-Schensted correspondence and
Viennot's version of the Robinson-Schensted correspondence \cite{Sa,Vi}.

Results analogous to those of \cite{Ko-Ra} and \cite{Kr-La} have
been obtained for the symplectic and orthogonal Grassmannians
(see \cite{Gh-Ra}, \cite{Ra-Up1}, \cite{Ra-Up2}). Given any torus fixed point in a Schubert variety in the Orthogonal Grassmannian, it is known (see, for instance \cite{Ra-Up2} or \cite{sthesis}) that the ideal of the tangent cone at this torus fixed point is generated by certain special kind of pfaffians. In the case when the Schubert variety is of a special kind and, the torus fixed point corresponds to the `identity coset', and the pfaffian generators of the ideal of the tangent cone are of a fixed size, Herzog and Trung provide a Gr\"obner basis for the ideal of the tangent cone in their paper \cite{He-Tr}. In the paper \cite{He-Tr}, Herzog and Trung use a version of the Robinson-Schensted-Knuth
correspondence (which we shall call the `ordinary' RSK)
in order to establish a degree-preserving bijection
between a set of monomials defined by an initial ideal and a
`standard monomial basis'. In \cite{Ra-Up2}, Raghavan and Upadhyay generalize the results of Herzog and Trung as in \cite{He-Tr} to ideals of tangent cones at any torus fixed point in any Schubert variety in the Orthogonal Grassmannian. In fact, Raghavan and Upadhyay give an explicit computation of the initial ideal (with respect to certain conveniently chosen
term orders) of the ideal of the tangent cone at any torus fixed point of any Schubert variety in the Orthogonal Grassmannian. But the computation in \cite{Ra-Up2} is done in the same spirit as in \cite{Ko-Ra} (for the ordinary Grassmannian) and \cite{Gh-Ra} (for the symplectic Grassmannian). The work done in \cite{Ra-Up2} does not involve any version of the RSK correspondence, unlike by Herzog and Trung in \cite{He-Tr}. The work done in \cite{Ra-Up2} relies on a degree-preserving bijection between a set of monomials defined by an initial ideal and a
`standard monomial basis', and this bijection is proved by Raghavan and Upadhyay in \cite{Ra-Up1}. It is mentioned in \cite{Ra-Up1} that it will be nice if the bijection proved therein can be viewed as a kind of `Bounded-RSK' correspondence, as done by Kreiman in \cite{Kr-bkrs} for the case of Richardson varieties in the ordinary Grassmannian. This paper fulfills the expectation made in \cite{Ra-Up1} of being able to view the bijection there as a generalized-bounded-RSK correspondence, which we call here the Orthogonal-bounded-RSK correspondence (OBRSK, for short). In fact, it is also mentioned by Kreiman in his paper \cite{Kr-bkrs} that he believes that it is possible to adapt the methods of \cite{Kr-bkrs} to Richardson varieties in the Symplectic and the Orthogonal Grassmannian as well. This paper also supports the above mentioned conviction of Kreiman made in his paper \cite{Kr-bkrs}. 

The OBRSK correspondence (as defined in this paper) is not a special case of the bounded-RSK correspondence as in \cite{Kr-bkrs}, however its basics rely upon the frame of the bounded-RSK correspondence. In fact, the OBRSK gives a bijective correspondence between certain special kind of pairs of multisets and certain special kind of bitableaux, unlike in the case of the bounded-RSK where the bijective correspondence was between certain special kind of multisets (not `pairs of multisets') and certain special kind of bitableaux. It will be nice if one can answer the following question:--- What can be an interpretation (in terms of representation theory of groups) of the fact that the bijection given in this paper is a generalized version of the RSK correspondence? In more details: It is proved in this paper that a set of certain special kind of bitableaux forms a basis for the coordinate ring of the tangent cone to a Richardson variety in the Orthogonal Grassmannian at any given torus fixed point of it. Now we can ask the following question:--- Does the above-mentioned set of special kind of bitableaux form a basis for modules of any group? If yes, then for what group? But before one asks such questions for the bijection given in this paper, the same questions need to be answered for the bijection given in the paper of Kreiman\cite{Kr-bkrs} in the case of Richardson varieties in the ordinary Grassmannian. And even before that, one needs to answer the question that what was the significance of the use of the RSK-correspondence in the work of Sturmfels (\cite{Stu}) and in the work of Herzog-Trung (\cite{He-Tr}). 
\subsection{Important note}
In this paper, we will be using lots of results, terminology and notation from \cite{sthesis} as well as \cite{Kr-bkrs}.
\subsection{Acknowledgements}
First of all I would like to thank Victor Kreiman whose paper (\cite{Kr-bkrs}) has motivated me to write this paper. I would also like to thank K. N. Raghavan, A. Conca and Sudhir. R. Ghorpade for valuable discussions, suggestions and corrections that had helped me in writing this paper.
\vspace{1em}
\section{The Orthogonal Grassmannian and Richardson varieties in it}\label{s.OGRV}
Fix an algebraically closed field $\field$ of characteristic not
equal to~$2$. Fix a natural number $d$, a vector space $V$ of dimension $2d$ over $\field$ and a non-degenerate symmetric bilinear form~$\langle\ ,\ \rangle$ on~$V$. For $k$ an integer such that $1\leq k\leq 2d$,  
set $k^*:=2d+1-k$.   Fix a basis 
$e_1,~\ldots,~e_{2d}$ of $V$ such that
\[
\langle e_i, e_k \rangle = \left\{ \begin{array}{rl}
    1 & \mbox{ if $i=k^*$ }\\ 
    0 & \mbox{ otherwise}\\ \end{array}\right.
 \]
Denote by $SO(V)$ the group of linear automorphisms of $V$ that preserve the bilinear 
form $\langle\ ,\ \rangle$, and also the volume form. A linear subspace of $V$ is said to be \textit{isotropic} if the bilinear form $\langle\ ,\ \rangle$ vanishes identically on it. Denote by $\miso'$ the closed sub-variety
of the Grassmannian of $d$-dimensional subspaces consisting of the points corresponding to maximal isotropic subspaces. The action of $SO(V)$ on $V$ induces an
action on $\miso'$ . There are two orbits for this action. These orbits are
isomorphic: acting by a linear automorphism that preserves the form but
not the volume form gives an isomorphism. We denote by $\miso$ the orbit
of the span of $e_1,\ldots,e_d$ and call it the \textit{(even) orthogonal Grassmannian}. One can define the Orthogonal Grassmannian in the case when the dimension of $V$ is not necessarily even, see \S 1.1 of \cite{sthesis} for instance. But it is enough to consider the case when the dimension of $V$ is even: this is proved in \S 1.3 of \cite{sthesis}. Therefore, now onwards we call the (even) orthogonal Grassmannian $\miso$ (as defined above for a $2d$ dimensional vector space $V$) to be the \textit{Orthogonal Grassmannian}. Let $\miso\subseteq G_{d}(V)\hookrightarrow \mathbb{P}(\wedge^d V)$ be the Pl\"ucker embedding (where $G_d(V)$ denotes the
Grassmannian of all $d$-dimensional subspaces of $V$). Thus $\miso$ is a closed subvariety of the projective variety $G_d(V)$, and hence $\miso$ inherits the structure of a projective variety.

We take $B$ (resp. $B^-$) to be the subgroup of $SO(V)$ consisting of those elements that are upper triangular (resp. lower triangular) 
with respect to the basis $e_1 ,\ldots, e_{2d}$, and the subgroup $T$ of $SO(V)$ consisting of those elements
that are diagonal with respect to $e_1,\ldots,e_{2d}$. It can be easily checked that $T$ is a maximal torus of $SO(V)$; $B$ and $B^-$ are Borel subgroups of $SO(V)$ which contain $T$. The group $SO(V)$ acts transitively on $\miso$, the $T$-fixed points of $\miso$ under this action are easily seen to be of the form $\langle e_{i_1},\ldots,e_{i_d}\rangle$ for $\{i_1,\ldots,i_d\}$ in $I(d)$, where $I(d)$ is the set of subsets of $\{1,\ldots,2d\}$ of cardinality $d$ satisfying the following two conditions:---\\
\begin{itemize}
\item for each $k$, $1\leq k\leq 2d$, the subset contains exactly one of $k$, $k^*$,
and
\item the number of elements in the subset that exceed~$d$ is even.
\end{itemize}
We write $I(d,2d)$ for the set of all $d$-element subsets of $\{1,\ldots,2d\}$.
There is a natural partial order on $I(d,2d)$ and so
also on $\id$:  $v=(v_1<\ldots<v_d)\leq w=(w_1<\ldots<w_d)$ if and only
if $v_1\leq w_1$, \ldots, $v_d\leq w_d$. For $\mu=\{\mu_1,\ldots,\mu_d\}\in I(d,2d), \mu_1<\cdots<\mu_d$, define the \textbf{complement} of $\mu$ as $\{1,\ldots,2d\}\setminus\mu$ and denote it by $\ol{\mu}$. 

The $B$-orbits (as well as $B^-$-orbits) of $\miso$ are naturally indexed by its $T$ -fixed points: each
$B$-orbit (as well as $B^-$-orbit) contains one and only one such point. Let $\ga\in\id$ be arbitrary and let $e_\ga$ denote the corresponding $T$-fixed point of $\miso$. The Zariski closure of the $B$ (resp. $B^-$) orbit through $e_\ga$, with canonical reduced
scheme structure, is called a \textbf{Schubert variety} (resp.
\textbf{opposite Schubert variety}), and denoted by $X^\ga$ (resp.
$X_\ga$).  For $\ga,\gc\in\id$, the scheme-theoretic
intersection $X_\ga^\gc=X_\ga\cap X^\gc$ is called a
\textbf{Richardson variety}. Each $B$-orbit (as well as $B^-$-orbit) being
irreducible and open in its closure, it follows that $B$-orbit closures (resp. $B^-$-orbit closures) are indexed
by the $B$-orbits (resp. $B^-$-orbits). Thus the set $\id$ becomes an indexing set for Schubert varieties in $\miso$, and the set consisting of all pairs of elements of $\id$ becomes an indexing set for Richardson varieties in $\miso$. It can be shown that $X_\ga^\gc$ is
nonempty if and only if $\ga\leq \gc$; that for $\gb\in\id$, $e_\gb\in X_\ga^\gc$
if and only if $\ga\leq\gb\leq\gc$; and that if $X_\ga^\gc$ is
nonempty, it is reduced and irreducible (see \cite{Br,Kr-La3,La-Go,Ri}). 
 
\section{Statement of the problem and the strategy of the proof}\label{s.state-problem}
In this section, we will first make an initial statement of the problem tackled in this paper, and then in different subsections of this section, we will develop necessary concepts, terminology and notation to make a statement of the main theorem (This will happen in the last subsection of this section, the main theorem being Theorem~\ref{t.main}), which will solve the problem tackled in this paper. Also in the last subsection, we will give a strategy of the proof. 
\subsection{Initial statement of the problem}\label{ss.In-state-problem}
The problem that is tackled in this paper is this: given
a $T$-fixed point on a Richardson variety in $\miso$, compute the initial ideal, with
respect to some convenient term order, of the ideal of functions vanishing on
the tangent cone to the Richardson variety at the given $T$-fixed point. The term order
is specified in \ref{ss.termorder}, and the answer is given in Theorem ~\ref{t.main}. 

For the rest of this paper, $\ga,\gb,\gc$ are arbitrarily fixed elements of $\id$ such that $\ga\leq\gb\leq\gc$. So, the problem tackled in this paper can be restated as follows: Given the Richardson variety $X_\ga^\gc$ in $\miso$ and the $T$-fixed point $e_\gb$ in it, find the initial ideal of the ideal of functions vanishing on the tangent cone at $e_\gb$ to $X_\ga^\gc$, with respect to some conveniently chosen term order. The tangent cone being a subvariety of the tangent
space at $e_\gb$ to $\miso$, we first choose a convenient set of coordinates for the
tangent space. But for that we need to fix some notation.

\subsection{Basic notation}\label{ss.basicnot}
For this subsection, let us fix an arbitrary element $v$ of $I(d,2d)$. We will be dealing extensively with ordered pairs $(r,c)$,
$1\leq r,c\leq 2d$,  such that $r$ is not and $c$ is an entry of~$v$.
Let $\androotsv$ denote the set of all such ordered pairs, and set
\\[1mm]
\begin{minipage}{6cm}
\begin{align*}
  \andposv &:= \left\{(r,c)\in\androotsv\st r>c\right\}\\
  \rootsv &:=  \left\{(r,c)\in\androotsv\st r<c^*\right\}\\
  \posv &:= \left\{(r,c)\in\androotsv\st r>c, r<c^*\right\}\\
  &=\rootsv\cap\andposv\\
  \diagv &:= \left\{(r,c)\in\androotsv\st r=c^*\right\}\\
\end{align*}\vfill
\end{minipage}
\hfill \begin{minipage}{6cm}
\setlength{\unitlength}{.34cm}
\begin{picture}(12,12)(-3,0)
\multiput(0,0)(12,0){2}{\line(0,1){12}}
\multiput(0,0)(0,12){2}{\line(1,0){12}}
\linethickness{.05mm}
\multiput(0,0)(1,0){13}{\line(0,1){12}}
\multiput(0,0)(0,1){13}{\line(1,0){12}}
\thicklines
\put(0,0){\line(1,1){12}}
\put(3.25,2.75){diagonal}
\thicklines
\put(5.25,10.25){boundary}\put(5.25,9.25){of $\andposv$}
\put(0,12){\line(1,0){3}}
\put(3,12){\line(0,-1){1}}
\put(3,11){\line(1,0){2}}
\put(5,11){\line(0,-1){2}}
\put(5,9){\line(1,0){2}}
\put(7,9){\line(0,-1){1}}
\put(7,8){\line(1,0){1}}
\put(8,8){\line(0,-1){1}}
\put(8,7){\line(1,0){1}}
\put(9,7){\line(0,-1){2}}
\put(9,5){\line(1,0){2}}
\put(11,5){\line(0,-1){2}}
\put(11,3){\line(1,0){1}}
\put(12,3){\line(0,-1){3}}
\put(0,1){\circle*{.4}}
\put(1,7){\circle*{.4}}
\put(6,8){\circle*{.4}}
\put(7,9){\circle*{.4}}
\put(8,10){\circle*{.4}}
\put(9,11){\circle*{.4}}
\put(10,12){\circle*{.4}}
\end{picture}
\end{minipage}
\\

The picture shows a drawing of $\androotsv$.   We think of $r$ and $c$
in $(r,c)$ as row index and column index respectively.    The columns
are indexed from left to right 
by the entries of~$v$ in ascending order,  the rows from top to bottom by
the entries of $\{1,\ldots,2d\}\setminus v$ in ascending order.
The points of $\diagv$ are those on the diagonal, 
the points of $\rootsv$ are those that are (strictly) above the diagonal, and
the points of $\andposv$ are those that are to the South-West of the
poly-line captioned ``boundary of $\andposv$''---we draw the
boundary so that points on the boundary belong to $\andposv$.
The reader can readily verify that  $d=13$ and $v=(1,2,3,4,6,7,10,11,13,15,18,19,22)$ for the particular picture drawn.   The points of $\rootsv$ indicated by solid circles form an \em{extended} $v$-\em{chain} (see the figure above), the definition of an extended $v$-chain is given later in \S~\ref{ss.extd-v-chains}.
 
We will be considering {\em monomials} (also called 
{\em multisets}) in some of these sets.
A {\em monomial},  as usual,  is a subset with each member being allowed
a multiplicity (taking values in the non-negative integers).  The 
{\em degree\/} of a monomial has also the usual sense: 
it is the sum of the multiplicities in the
monomial over all elements of the set.  The {\em intersection\/} 
of a monomial in a set with a subset of the set has also the natural meaning:
it is a monomial in the subset,  the multiplicities being those in the
original monomial. We will refer to $\diagv$ as the {\em diagonal}.

\begin{center}
Moreover, let $\drootsv:=\{(r,c)\in\androotsv\st r>c^*\}$\\
and  $\dposv:=\{(r,c)\in\androotsv\st r>c, r>c^*\}$
\end{center}
In other words, $\drootsv$ denotes the part of the grid (as in the picture above) that lies strictly below the {\em diagonal} and $\dposv$ denotes the intersection of $\drootsv$ with $\andposv$.

Given any two multisets $A$ and $B$ consisting of elements of $\androotsv$, let $set(A)$ and $set(B)$ denote the underlying sets of $A$ and $B$ respectively. We say that $B\subseteq A (as\ multisets)$ if $set(B)\subseteq set(A)$ and the multiplicity with which every element occurs in the multiset $B$ is less than or equal to the multiplicity with which the same element occurs in the multiset $A$. Given two multisets $A$ and $B$ consisting of elements of $\androotsv$ such that $B\subseteq A (as\ multisets)$, we can define a multiset called the \textit{`multiset minus' of $B$ from $A$} (denoted by $A\setminus_m B$) as follows: Take any element $x$ of $set(B)$. If the multiplicity with which $x$ occurs in $A$ is $m_x(A)$ and the multiplicity with which $x$ occurs in $B$ is $m_x(B)$, then the multiplicity with which $x$ occurs in the multiset $A\setminus_m B$ is $m_x(A)-m_x(B)$. And any element in $set(A)\setminus set(B)$ occurs in the multiset $A\setminus_m B$ with the same multiplicity with which it occurs in $A$. This finishes the description of $A\setminus_m B$.

\brem\label{r.idd}
Note that in this subsection, $v$ was any element of $I(d,2d)$, $v$ was not necessarily in $\id$. In particular, all the above basic notation will hold true if we take $v\in\id$ as well. 
\erem

\subsection{The tangent space to $\miso$ at $e_\gb$}\label{ss.tgtspace}
Let $\miso\subseteq G_{d}(V)\hookrightarrow \mathbb{P}(\wedge^d V)$ be the Pl\"ucker embedding (where $G_d(V)$ denotes the
Grassmannian of all $d$-dimensional subspaces of $V$). For $\theta$ in $I(d, 2d)$, let $p_\theta$ denote the corresponding Pl\"ucker coordinate. Consider the affine patch $\affinev$ of $\mathbb{P}(\wedge^d V)$ given by $p_\gb\neq 0$, where $\gb$ is the element of $\id$ which was fixed at the beginning of this section. The affine patch $\affinev^\gb:=\miso\cap\affinev$ of the orthogonal
Grassmannian $\miso$ is an affine space whose coordinate ring can be taken
to be the polynomial ring in variables of the form $X_{(r,c)}$ with $(r, c)\in\orootsb$.
Taking $d=5$ and $\gb=(1,3,4,6,9)$ for example, a general element of $\affinev^\gb$ has
a basis consisting of column vectors of a matrix of the following form:
\begin{equation}\label{eq.matrix}\left(\begin{array}{ccccc}
1 & 0 & 0 & 0 & 0 \\
X_{21} & X_{23} & X_{24} & X_{26} & 0 \\
0 & 1 & 0 & 0 & 0 \\
0 & 0 & 1 & 0 & 0 \\
X_{51} & X_{53} & X_{54} & 0 & -X_{26}\\
0 & 0 & 0 & 1 & 0\\
X_{71} & X_{73} & 0 & -X_{54} & -X_{24}\\
X_{81} & 0 & -X_{73} & -X_{53} & -X_{23}\\
0 & 0 & 0 & 0 & 1\\
0 & -X_{81} & -X_{71} & -X_{51} & -X_{21}\\
\end{array}\right)\end{equation}
The origin of the affine space $\affinev^\gb$ , namely the point at which all $X_{(r,c)}$ vanish,
corresponds clearly to $e_\gb$. The tangent space to $\miso$ at $e_\gb$ can therefore
be identified with the affine space $\affinev^\gb$ with co-ordinate functions $X_{(r,c)}$.

\subsection{The ideal of the tangent cone to $X_\ga^\gc$ at $e_\gb$}\label{ss.ideal-of-tgtcone}
Set $Y_\ga^\gc(\gb):=X_\ga^\gc\cap\affinev^\gb$. From \cite{La-Se-II} we can deduce a set of generators for the ideal $I$ of functions on $\affinev^\gb$ vanishing on $Y_\ga^\gc(\gb)$ (see for example \cite{sthesis}, \S 3.2.2 for the special case of Schubert varieties). We recall this result now.

In the matrix~\ref{eq.matrix}, columns are numbered by the entries of $\gb$, the
rows by $\{1,\ldots,2d\}$. For $\theta\in\id$, consider the submatrix given by the rows
numbered $\theta\setminus\gb$ and columns numbered $\gb\setminus\theta$. Such a submatrix being of even
size and skew-symmetric along the anti-diagonal, we can define its \textit{Pfaffian} (see \S 3 of \cite{Ra-Up2}). Let $f_{\theta,\gb}$ denote this Pfaffian. We have
\begin{equation}\label{eq.ideal}
I=\left(f_{\tau,\gb}\st \tau\in\id, \ga\not\leq\tau\ or\ \tau\not\leq\gc\right).
\end{equation}
We are interested in the tangent cone to $X_\ga^\gc$ at $e_\gb$ or, what is the
same, the tangent cone to $Y_\ga^\gc(\gb)\subseteq\affinev^\gb$ at the origin. Observe that $f_{\tau,\gb}$ is a
homogeneous polynomial of degree the $\gb$-degree of $\tau$, where the $\gb$-degree of $\tau$
is defined as one half of the cardinality of $\gb\setminus\tau$. Because of this, $Y_\ga^\gc(\gb)$ itself
is a cone and so equal to its tangent cone. The ideal of the tangent cone to $X_\ga^\gc$ at $e_\gb$ is
therefore the ideal $I$ in equation~\ref{eq.ideal}.

\subsection{The term order}\label{ss.termorder}
We now specify the term order $\torder$ on monomials in the coordinate functions (of the tangent space to $\miso$ at the torus fixed point $e_\gb$) with respect to which the
initial ideal of the ideal $I$ of the tangent cone is to be taken.

Let $>$ be a total order on~$\orootsb$ satisfying all of the following $6$ conditions:
\begin{itemize}
\item 
$\mu>\nu$ if $\mu\in\ortho\posb$,  $\nu\in\orootsb\setminus\ortho\posb$,
and the row indices of $\mu$ and $\nu$ are equal.
\item
$\mu>\nu$ if $\mu\in\ortho\posb$,  $\nu\in\ortho\posb$,
the row indices of $\mu$ and $\nu$ are equal,  and the column index
of~$\mu$ exceeds that of~$\nu$.
\item 
$\mu>\nu$ if $\mu\in\ortho\posb$,  $\nu\in\orootsb$ and the row index of $\mu$
is less than that of~$\nu$.
\item 
$\mu>\nu$ if $\mu\in\orootsb\setminus\ortho\posb$,  $\nu\in\ortho\posb$,
and the column indices of $\mu$ and $\nu$ are equal.
\item
$\mu>\nu$ if $\mu\in\orootsb\setminus\ortho\posb$,  $\nu\in\orootsb\setminus\ortho\posb$,
the column indices of $\mu$ and $\nu$ are equal,  and the row index
of~$\mu$ exceeds that of~$\nu$.
\item 
$\mu>\nu$ if $\mu\in\orootsb\setminus\ortho\posb$,  $\nu\in\orootsb$ and the column index of $\mu$
is less than that of~$\nu$.
\end{itemize}
Note here that the first $3$ conditions above are the same as the conditions put on the total order $>_1$ as mentioned in \S 1.6 of \cite{Ra-Up2}. Recall that in the paper \cite{Ra-Up2}, initial ideals of ideals of tangent cones at torus fixed points to Schubert varieties in Orthogonal Grassmannians were computed, the paper \cite{Ra-Up2} did not deal with Richardson varieties. The last $3$ conditions above arise in this paper as an addition to the $3$ conditions put on the total order $>_1$ (as mentioned in \S 1.6 of \cite{Ra-Up2}), because here we are dealing with Richardson varieties in $\miso$. 

Let~$\torder$ be the term order on monomials in~$\orootsb$
(terminology as in~\cite[pages~329,~330]{ebud}) given by:
\begin{itemize}
\item the homogeneous lexicographic order with respect to~$>$. 
\end{itemize}

\brem\label{r.termorder}
The total order $>$ on~$\orootsb$ satisfying the 6 properties mentioned above can be realized as a concrete total order on $\orootsb$ if we put the following extra condition on it :
\begin{center}
Let $r(\mu), r(\nu), c(\mu), c(\nu)$ denote the row index of $\mu$, the row index of $\nu$, the column index of $\mu$, and the column index of $\nu$ respectively. If $r(\mu)<r(\nu)$, $\mu\in\orootsb\setminus\ortho\posb$, $\nu\in\ortho\posb$ and $c(\nu)<c(\mu)$, then
\begin{itemize}
 \item $\nu>\mu$ when $(r(\nu),c(\mu))\notin\posb$ AND $\mu>\nu$ when $(r(\nu),c(\mu))\in\posb$.
\end{itemize}
\end{center}
\erem\label{r.termorder}

\subsection{Extended $v$-chains and associated elements of $\id$}\label{ss.extd-v-chains}
For this subsection, let $v$ be a arbitrarily fixed element of $I(d,2d)$ (not necessarily an element of $\id$, unless otherwise stated). For elements $\lambda=(R,C),\mu=(r,c)$ of $\androotsv$, we write $\lambda>\mu$ if $R>r$ and $C<c$ (Note that these are strict inequalities). A sequence $\lambda_1>\cdots>\lambda_k$ of elements of $\rootsv$ is called an \textit{extended} $v$-chain. The points indicated by solid circles in the picture in \S~\ref{ss.basicnot} form an extended $v$-chain. Note that an extended $v$-chain can also be empty. Letting $C$ to be an extended $v$-chain, we define $C^+:=C\cap\posv$ and $C^-:=C\cap(\rootsv\setminus\posv)$. We call $C^+$ (resp. $C^-$) to be the \em{positive} (resp. \em{negative}) parts of the extended $v$-chain $C$. We call an extended $v$-chain $C$ to be \em{positive} (resp. \em{negative}) if $C=C^+$ (resp. $C=C^-$). The extended $v$-chain $C$ is called \textit{non-vanishing} if at least one of its positive or negative parts is non-empty. Clearly then, every non-empty extended $v$-chain is non-vanishing. Note that if we specialize to the case when $v\in\id$, then whatever is called a $v$-chain in \S 2.2.1 of \cite{sthesis} is a positive extended $v$-chain over here. To every extended $v$-chain $C$, we will now associate $2$ subsets $\diagv_C(+)$ and $\diagv_C(-)$ of $\diagv$ (each of even cardinality), but for that we first need to fix some notation and recall certain terminology from \cite{sthesis}. 

\bdefn\label{defn.PrandPro}
\textbf{$Pr$ and $Pro$}: Given any subset $D$ of $\posv$, let us denote by $Pr(D)$ the multiset (that means, counting multiplicities) of the projections (both vertical and horizontal, as defined in \S 5.3.1 of \cite{sthesis}) of all its elements on $\diagv$. For $\lambda=(r,c)$ in $\androotsv$, define $\lambda^\hash:=(c^*,r^*)$. The involution $\lambda\mapsto\lambda^\hash$ is just
the reflection with respect to the diagonal $\diagv$. For a subset $\mathfrak{E}$ of
$\andposv$, the symbol $\mathfrak{E}^\hash$ has the obvious meaning. We call $\mathfrak{E}$ \textit{symmetric} if $\mathfrak{E}=\mathfrak{E}^\hash$. Given any symmetric subset $E$ of $\andposv$, let us denote by $E(top)$ the set $E\cap\posv$ and by $E(diag)$, the set $E\cap\diagv$, and by $Pro(E)$ the multiset formed by taking the union of the subset $E(diag)$ with the multiset $Pr(E(top))$. Let us make the definition of $Pro(E)$ more precise: The multiplicity with which any element occurs in the multiset $Pro(E)$ is equal to the sum of the multiplicities with which the element occurs in the subset $E(diag)$ and the multiset $Pr(E(top))$. So for any symmetric subset $E$ of $\andposv$, $Pro(E)$ is a multiset consisting of elements from the diagonal. Similarly for any subset $D$ of $\posv$, $Pr(D)$ is also a multiset consisting of elements from the diagonal. 
\edefn

If we take $v$ to be in $\id$, we can recall from \S 5.3 of \cite{sthesis} the definition of the monomial $\mon_C$ attached to a $v$-chain $C$ (Note that a $v$-chain in \cite{sthesis} is a positive extended $v$-chain over here). Note that even if we take $v$ to be in $I(d,2d)$ (and not merely in $\id$) and define $\mon_C$ for any positive extended $v$-chain $C$ exactly in the same way as we did in \S 5.3 of \cite{sthesis}, there is no logical inconsistency. Hence we extend the definition of $\mon_C$ to any positive extended $v$-chain $C$ where $v\in I(d,2d)$. Clearly $\mon_C$ is a symmetric subset of $\andposv$. Hence the multiset $Pro(\mon_C)$ is well defined for any positive extended $v$-chain $C$ where $v\in I(d,2d)$. 

\bdefn\label{defn.flip}
\textbf{The flip map $F$}: For any $v\in I(d,2d)$ and any element $\lambda=(r,c)\in\androotsv$, let $F(\lambda)$ be the element of $\rootsvstar$ given by $F(\lambda):=(c,r)$. So $F$ is an invertible map from $\androotsv$ to $\rootsvstar$ (note here that if $v\in\id$, then $v^*$ need not always belong to $\id$), let us denote the inverse of $F$ by $F^{-1}$. The map  $F$ naturally induces an invertible map from the set of all multisets in $\androotsv$ to the set of all multisets in $\rootsvstar$. We continue to call the induced map also as $F$ and its inverse as $F^{-1}$. 
\edefn
 
\bdefn\label{defn.diagvC+-}
\textbf{The subsets $\diagv_C(+)$ and $\diagv_C(-)$ of $\diagv$}: Given any extended $v$-chain $C$, we will now associate $2$ subsets $\diagv_C(+)$ and $\diagv_C(-)$ of $\diagv$ (each of even cardinality) to it as mentioned towards the beginning of this subsection. Let 
\begin{center}
$\diagv_C(+):=\left\{\begin{array}{ccc}
Pro(\mon_C) & if & C\ is\ positive\\
F^{-1}(Pr(F(C))\setminus_m Pro(\mon_{F(C)})) & if & C\ is\ negative\\
Pro(\mon_{C^+}) & if & C\ is\ non-vanishing\\ \end{array}\right.$
\end{center}
Similarly, let 
\begin{center}
$\diagv_C(-):=\left\{\begin{array}{ccc}
Pr(C)\setminus_m Pro(\mon_C) & if & C\ is\ positive\\
F^{-1}(Pro(\mon_{F(C)})) & if & C\ is\ negative\\
F^{-1}(Pro(\mon_{F(C^-)})) & if & C\ is\ non-vanishing\\ \end{array}\right.$
\end{center} 
It is an easy exercise to check that $\diagv_C(+)$ and $\diagv_C(-)$ thus defined are actually subsets (not multisets) of $\diagv$ and that each of them has even cardinality. 
\edefn

\bdefn\label{defn.wC+-}
\textbf{Elements of $\id$ associated to $\diagv_C(+)$ and $\diagv_C(-)$}: For this definition, we let $v$ to be an arbitrary element in $\id$. Note that given any subset $S$ of $\diagv$ of even cardinality, we can naturally associate an element of $\id$ to it by removing those entries from $v$ which appear as column indices in the elements of the set $S$ and then adding to it the row indices of all the elements of $S$. It is easy to check that the resulting element actually belongs to $\id$. We denote the resulting element by $\id(S)$. If $S$ is empty, then $\id(S)$ is taken to be $v$ itself.

Let $w_C^+(v):=\id(\diagv_C(+))$ and $w_C^-(v):=\id(\diagv_C(-))$. These are the two elements of $\id$ that we can naturally associate to the subsets $\diagv_C(+)$ and $\diagv_C(-)$ of $\diagv$.
\edefn

\subsection{The main theorem and a strategy of the proof}\label{ss.mainthm+strategy}
Recall that the ideal of the tangent cone to $X_\ga^\gc$ at $e_\gb$ is
the ideal $I$ given by equation~\ref{eq.ideal}, that is,\\
\begin{equation}
I=\left(f_{\tau,\gb}\st \tau\in\id, \ga\not\leq\tau\ or\ \tau\not\leq\gc\right).
\end{equation}
Let $\torder$ be as in \ref{ss.termorder}. For any element $f\in I$, let $\init f$ denote the initial term of $f$ with respect to the term order $\torder$. We define $\init I$ to be the ideal $\langle\init f\st f\in I\rangle$ inside the polynomial ring $P:=\field[X_{(r,c)}\st (r,c)\in\orootsb]$. For any monomial $U$ in $\orootsb$, let us denote by $X_U$ the product of all the elements $X_{(r,c)}$ where $(r,c)$ runs over all elements in $U$. 

Let $Chains_\ga^\gc(\gb)$ denote the set $\{X_C\st C\ is\ a\ non-vanishing\ extended\ \gb-chain\ in\ \orootsb\ such\ that\ either\ (i)\ or\ (ii)\ of\ \ref{eq.chains}\ holds\}$.
\begin{equation}\label{eq.chains}
(i) C^-\ is\ non-empty\ and\ \ga\not\leq w_{C^-}^-(\gb).
(ii) C^+\ is\ non-empty\ and\ w_{C^+}^+(\gb)\not\leq\gc.
\end{equation}

\noindent The main result of this paper is the following:---

\begin{thm}\label{t.main}
$\init I=\langle Chains_\ga^\gc(\gb)\rangle$. 
\end{thm}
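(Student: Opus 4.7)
The plan is to prove the equality of ideals $\init I = \langle Chains_\ga^\gc(\gb)\rangle$ by verifying the two inclusions separately, with the reverse inclusion obtained via a Hilbert-function argument powered by the OBRSK correspondence advertised in the introduction.

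For the inclusion $\langle Chains_\ga^\gc(\gb)\rangle \subseteq \init I$, the task is to exhibit, for each non-vanishing extended $\gb$-chain $C$ satisfying (i) or (ii) of \ref{eq.chains}, an element $g_C \in I$ whose initial term with respect to $\torder$ is (a nonzero scalar multiple of) $X_C$. The natural candidate is a Pfaffian of the form $f_{\tau,\gb}$ with $\tau := w_{C^-}^-(\gb)$ when (i) holds and $\tau := w_{C^+}^+(\gb)$ when (ii) holds; in either case the defining condition of $Chains_\ga^\gc(\gb)$ guarantees $\tau \in \id$ with $\ga \not\leq \tau$ or $\tau \not\leq \gc$, hence $f_{\tau,\gb} \in I$. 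The computation then reduces to checking that, under the conditions on $\torder$ laid down in \ref{ss.termorder}, the dominant term in the Pfaffian expansion of $f_{\tau,\gb}$ is precisely $X_C$. This is the analogue for the orthogonal Grassmannian and for Richardson (rather than Schubert) varieties of the leading-term computations in \cite{Ra-Up2} and \cite{Kr-bkrs}; the new ingredient is that the last three clauses defining $\torder$ allow the Pfaffian to ``see'' the part of the chain lying outside $\posb$, which is exactly what encodes the opposite Schubert condition $\ga \leq \tau$.

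For the reverse inclusion $\init I \subseteq \langle Chains_\ga^\gc(\gb)\rangle$, I would pass to Hilbert functions. Call a monomial in $\orootsb$ \emph{standard} (relative to $\ga,\gb,\gc$) if it is not divisible by any $X_C$ with $X_C \in Chains_\ga^\gc(\gb)$; the classes of these standard monomials span $P/\langle Chains_\ga^\gc(\gb)\rangle$. Since $\langle Chains_\ga^\gc(\gb)\rangle \subseteq \init I$ already, it suffices to show that the images of the standard monomials in $P/I$ are linearly independent, because then they will be a basis and the two quotients will have equal Hilbert functions. Independence will follow once a degree-preserving bijection is produced between the set of standard monomials of each degree and a known $\field$-basis of $P/I$ (the ``standard bitableaux'' basis inherited from \cite{Kr-bkrs} and \cite{sthesis}, indexed by pairs compatible with $\ga \leq \theta \leq \gc$). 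Combined with the explicit spanning by standard bitableaux furnished by Lakshmibai--Seshadri-style standard monomial theory on Richardson varieties in $\miso$, this forces $\init I$ and $\langle Chains_\ga^\gc(\gb)\rangle$ to have the same Hilbert function and hence to coincide.

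The main obstacle, and the technical heart of the paper, is constructing the OBRSK correspondence and verifying that it delivers precisely the bijection required in the previous paragraph: it must be degree-preserving, match the combinatorial meaning of the $\posb$-part of a standard monomial with the ``positive'' bitableau data (as in Kreiman's bounded-RSK) while simultaneously using the flip map $F$ of Definition \ref{defn.flip} and the multiset operations $Pr$ and $Pro$ of Definition \ref{defn.PrandPro} to handle the $\orootsb \setminus \posb$-part and pair it correctly with the opposite Schubert data encoded by $w_{C^-}^-(\gb)$. The image condition for a monomial to be standard, once unwound through Definition \ref{defn.wC+-}, should correspond exactly to the bitableau conditions $\ga \leq \theta$ and $\theta \leq \gc$; verifying that OBRSK respects this correspondence chain-by-chain is expected to be the delicate combinatorial step, and will be carried out by reducing, via the flip $F$, to two applications of Kreiman's bounded-RSK (one for each side of the Richardson condition) glued along the diagonal $\diagb$.
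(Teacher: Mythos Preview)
Your overall architecture---one containment via an initial-term computation, the reverse via a Hilbert-function count mediated by OBRSK---is exactly the paper's strategy. Two points need correction, though.

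\textbf{The forward inclusion.} Your choice $\tau = w_{C^{\pm}}^{\pm}(\gb)$ and the claim that $\init f_{\tau,\gb} = X_C$ do not work as stated. First, a degree check: the $\gb$-degree of $w_{C^+}^+(\gb)$ is $\tfrac{1}{2}|\diagb_{C^+}(+)|$, which matches $|C^+|$, not $|C|$; so when $C$ has both a positive and a negative part, no Pfaffian built from $w_{C^{\pm}}^{\pm}(\gb)$ alone can have $X_C$ as its leading monomial. The paper instead shows $X_{C^+}\in\init I$ (respectively $X_{C^-}\in\init I$) and then uses that $\init I$ is an ideal to absorb the other factor. Second, even for a pure positive chain, the element of $\id$ that actually yields $X_{C^+}$ as the initial term of a Pfaffian is not $w_{C^+}^+(\gb)$ but the finer element $y_{C^+}$ built from $\textup{Proj}^e$ as in \cite{Ra-Up2}; one has $y_{C^-}\leq w_{C^-}^-(\gb)$ with strict inequality in general (see Remark~\ref{r.Chains-in-initI}), so your $\tau$ need not satisfy the required leading-term property. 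The construction of the analogues of the ``new forms'', $\textup{Proj}$, $\textup{Proj}^e$ for the negative part via the flip $F$ is precisely what this step needs, and you should route through them rather than through $w_{C^{\pm}}^{\pm}(\gb)$ directly.

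\textbf{The reverse inclusion and OBRSK.} Here only an inequality is needed: a degree-preserving \emph{injection} from standard monomials (monomials not in $\langle Chains_\ga^\gc(\gb)\rangle$) to the standard-monomial basis of $P/I$ suffices, since combined with $\langle Chains_\ga^\gc(\gb)\rangle\subseteq\init I$ it forces equality of Hilbert functions. The paper establishes only that OBRSK carries bounded skew-symmetric multisets to bounded skew-symmetric notched bitableaux (Lemma~\ref{l.g.dominance}); surjectivity onto the bounded codomain is deduced \emph{a posteriori}. Your final paragraph describes OBRSK as ``two applications of Kreiman's bounded-RSK glued along the diagonal'', but this is not accurate: OBRSK is a single insertion procedure on a \emph{pair} of arrays linked by the duality $D_{\pi_1,\pi_2}$, with the insertion into $P$ and the dual insertion into $Q$ performed simultaneously and interlocked (the bumping path in $P$ dictates the dual path in $Q$). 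It does not decompose as two independent runs of Kreiman's BRSK, and the skew-symmetry of the output bitableau depends on this coupling.
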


\brem\label{r.initI}
It follows from the statement of Theorem~\ref{t.main} above that: The set of all monomials in $\orootsb$ which contain at least one extended $\gb$-chain $C$ such that $X_C\in Chains_\ga^\gc(\gb)$, form a vector space basis of the initial ideal $\init I$ over the field $\field$. In the special case when the Richardson variety is a Schubert variety, it is easy to see that the previous statement of this remark says exactly what has been said in the main theorem (Theorem 1.8.1) of \cite{Ra-Up2}.  \erem

We now briefly sketch the proof of Theorem~\ref{t.main} (omitting details, which can be found in Section \ref{s.grobner}). In order to introduce the main combinatorial objects of interest and outline a strategy of the proof, we will first need to prove that the set $Chains_\ga^\gc(\gb)\subseteq\init I$, and this proof will follow from whatever is said in Remark~\ref{r.Chains-in-initI} below.
\brem\label{r.Chains-in-initI}
Let $C$ be a non-vanishing extended $\gb$-chain in $\orootsb$ such that $X_C\in Chains_\ga^\gc(\gb)$. If $C^+$ is non-empty and $w_{C^+}^+(\gb)\not\leq\gc$, then it can be proved that $X_{C^+}\in\init I$, the proof being exactly the same as that in \S 4 of \cite{Ra-Up2}. Then since $\init I$ is an ideal and $X_C=X_{C^-}X_{C^+}$, it follows that $X_C\in\init I$. 

If $C^-$ is non-empty and $\ga\not\leq w_{C^-}^-(\gb)$, look at $F(C^-)$ where $F$ is the \textit{flip} map as defined in \S \ref{defn.flip} from the set of all multisets in $\rootsb$ to the set of all multisets in $\rootsbstar$. Then $F(C^-)$ is a positive extended $\gb^*$-chain in $\ortho\rootsbstar$. We need to prove that $X_{C}\in\init I$, for which it is enough to show that $X_{C^-}\in\init I$. To prove that $X_{C^-}\in\init I$, we will proceed in a way equivalent to the proof done in \S 4 of \cite{Ra-Up2}. But there is a subtle difference between what is proved in \S 4 of \cite{Ra-Up2} and what we are going to prove here, namely: Whatever was proved in \S 4 of  \cite{Ra-Up2} can be rephrased in the language of this paper as `Every positive extended $\gb$-chain $D$ satisfying the property that $w_{D}^+(\gb)\not\leq\gc$ belongs to the initial ideal of the ideal of the tangent cone', but here we are going to prove that `Every negative extended $\gb$-chain $D$ satisfying the property that $\ga\not\leq w_{D}^-(\gb)$ belongs to the initial ideal of the ideal of the tangent cone'. 

Because of this subtle difference, we need to construct certain gadjets for negative extended $\gb$-chains, which will play role similar to the role of the objects like the \textit{new forms}, $Proj$ and $Proj^e$ corresponding to positive extended $\gb$-chains (For positive extended $\gb$-chains, such objects are already defined in \cite{Ra-Up2}). This construction is given in the following paragraph.

Consider the positive extended $\gb^*$-chain $F(C^-)$. We can construct \textit{new forms}, $Proj$ and $Proj^e$ for $F(C^-)$ in the same way as they were constructed in \cite{Ra-Up2}, note here the fact that $\gb^*$ may or may not belong to $\id$ does not really effect the construction of the new forms, $Proj$ and $Proj^e$ for $F(C^-)$. Then we apply the map $F^{-1}$ to these objects constructed for $F(C^-)$ , the resulting objects are the analogues of the \textit{new forms}, $Proj$ and $Proj^e$ for the negative extended $\gb$-chain $C^-$. We apply similar treatment to any other monomial related to $F(C^-)$ that we happen to encounter if we replace the `$v$-chain $A$' in \S 4.2 of \cite{Ra-Up2} by `the positive extended $\gb^*$-chain $F(C^-)$'. 

In \S 2.4 of \cite{Ra-Up2}, an element $y_E$ of $\id$ corresponding to any $v$-chain $E$ (the notion of a $v$-chain being as in \S 1.7 of \cite{Ra-Up2}) has been defined. The analogous element of $\id$ for the negative extended $\gb$-chain $C^-$ (We call it $y_{C^-}$ here) can be obtained from $F^{-1}(Proj^e(F(C^-)))$ by following the natural process: \textit{the column indices of elements of $F^{-1}(Proj^e(F(C^-)))$ occur as members of $\gb$; these are replaced by the row indices to obtain the desired element of $\id$ for $C^-$}. It is easy to check that $y_{C^-}$ belongs to $\id$ and that $y_{C^-}\leq w_{C^-}^-(\gb)\leq\gb$. Since we already have that $\ga\not\leq w_{C^-}^-(\gb)$, it follows that $\ga\not\leq y_{C^-}$. These facts about $y_{C^-}$ will be needed to produce an analogue of the main proof of \cite{Ra-Up2} in our present case. To be more precise, these facts about $y_{C^-}$ give the analogues of Propositions 2.4.1 and 2.4.2 of \cite{Ra-Up2} and these two propositions had been used quite crucially inside the main proof of \cite{Ra-Up2}.  

With all these analogues constructed for negative extended $\gb$-chains, we can proceed in an `equivalent' manner (Here, by `equivalent' we mean: keeping track of the subtle difference as mentioned above and working accordingly) as in the paper \cite{Ra-Up2} and end up proving the desired fact, viz., $X_{C^-}\in\init I$.  
\erem
 
Since $Chains_\ga^\gc(\gb)\subseteq\init I$, we have $\langle Chains_\ga^\gc(\gb)\rangle\subseteq\init I$. To prove Theorem~\ref{t.main}, we now need to show that $\langle Chains_\ga^\gc(\gb)\rangle\supseteq\init I$. For this, it suffices to show that in any degree, the number of monomials of $\langle Chains_\ga^\gc(\gb)\rangle$ is $\geq$ the number of monomials of $\init I$. Or equivalently, it suffices to show that in any degree, the number of monomials of $P/\langle Chains_\ga^\gc(\gb)\rangle$ is $\leq$ the number of monomials of $P/ \init I$. 

Recall from \S~\ref{ss.ideal-of-tgtcone} the affine patch $Y_\ga^\gc(\gb)(:=X_\ga^\gc\cap\affinev^\gb)$ of the Richardson variety $X_\ga^\gc$. The following is a well known result (see \cite{Br,La-Go}, for
instance).
\begin{thm}\label{t.coord-Ygagcgb} $\field[Y_\ga^\gc(\gb)]=P/I$ where $P=\field[X_{(r,c)}\st (r,c)\in\orootsb]$ and $I$ is as in equation~\ref{eq.ideal}.
\end{thm}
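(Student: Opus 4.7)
The plan is to reduce the claim to the already-established descriptions for the Schubert variety $X^\gc$ and the opposite Schubert variety $X_\ga$, and then invoke the reducedness of the Richardson variety $X_\ga^\gc$.

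First I would observe, from \S\ref{s.OGRV}, that $X_\ga^\gc$ is by definition the scheme-theoretic intersection $X_\ga\cap X^\gc$ inside $\miso$, and that (when nonempty) it is reduced and irreducible. Intersecting with the affine patch $\affinev^\gb$ of \S\ref{ss.tgtspace} commutes with scheme-theoretic intersection, so $Y_\ga^\gc(\gb)=Y_\ga(\gb)\cap Y^\gc(\gb)$ as a scheme in $\affinev^\gb$, where $Y^\gc(\gb):=X^\gc\cap\affinev^\gb$ and $Y_\ga(\gb):=X_\ga\cap\affinev^\gb$. Consequently it is enough to identify the defining ideal of each of $Y^\gc(\gb)$ and $Y_\ga(\gb)$ in $P$, show their sum equals $I$, and then check that the sum is already radical.

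For the Schubert case I would quote the Pfaffian description coming from \cite{La-Se-II} (see also \cite{sthesis}, \S 3.2.2): the ideal $I^\gc$ of $Y^\gc(\gb)$ in $P$ is generated by $\{f_{\tau,\gb}\mid \tau\in\id,\ \tau\not\leq\gc\}$. For the opposite Schubert case I would apply the involution on $V$ that reverses the basis $e_1,\ldots,e_{2d}$ (preserving the form up to sign); it swaps $B$ and $B^-$, sends Schubert varieties to opposite Schubert ones and partial order to its reverse, and carries $\affinev^\gb$ to the analogous patch around the image $T$-fixed point. Transporting the Schubert description through this involution, and checking that $f_{\tau,\gb}$ matches the Pfaffian on the opposite side (up to a unit), one gets that the ideal $I_\ga$ of $Y_\ga(\gb)$ in $P$ is generated by $\{f_{\tau,\gb}\mid \tau\in\id,\ \ga\not\leq\tau\}$. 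Adding the two, $I^\gc+I_\ga=I$ as in equation~\ref{eq.ideal}.

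To finish, the scheme-theoretic intersection identity above says that the ideal of $Y_\ga^\gc(\gb)$ in $P$ is $I^\gc+I_\ga=I$; reducedness of $X_\ga^\gc$, hence of the open subscheme $Y_\ga^\gc(\gb)$, guarantees this ideal is already radical, so $\field[Y_\ga^\gc(\gb)]=P/I$. The main obstacle is the opposite Schubert step: one must verify that the chosen involution is compatible with the explicit matrix presentation~\ref{eq.matrix} and that it sends the Pfaffian generator $f_{\tau,\gb}$ to (a unit multiple of) the Pfaffian associated to the opposite setting, with $\tau\not\leq\gc$ converting into $\ga\not\leq\tau$. Alternatively, the opposite case can be established by repeating the Lakshmibai--Seshadri argument of \cite{La-Se-II} directly on $Y_\ga(\gb)$; either way, once this bookkeeping is in place, the rest of the proof is formal.
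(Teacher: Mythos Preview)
The paper does not actually supply a proof of this theorem: it is stated with the preface ``The following is a well known result (see \cite{Br,La-Go}, for instance)'', and the description of $I$ itself was already asserted in \S\ref{ss.ideal-of-tgtcone} as a consequence of \cite{La-Se-II} (with \cite{sthesis}, \S 3.2.2 cited for the Schubert case). So there is no proof in the paper to compare your proposal against; the author simply imports the statement from the literature.

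That said, your outline is a perfectly reasonable way to justify the result, and it is essentially the argument one finds in the cited references. Your decomposition into $I^\gc$ and $I_\ga$, together with the reducedness of $X_\ga^\gc$ to ensure $I^\gc+I_\ga$ is already radical, is the standard route. The one place you correctly flag as needing care---the opposite Schubert case via the order-reversing involution on the basis---does require checking compatibility with the specific matrix presentation of $\affinev^\gb$ and with the Pfaffian generators $f_{\tau,\gb}$; this is straightforward but not entirely formal, since the involution moves $e_\gb$ to a different $T$-fixed point and one must transport back. The alternative you mention, redoing the \cite{La-Se-II} argument directly for $X_\ga$, is cleaner and is what the references effectively do.
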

 
Both the monomials of $P/\init I$ and the standard monomials on $Y_\ga^\gc(\gb)$ form a basis for $P/I$, and thus agree in cardinality in any degree. Therefore, to prove that in any degree, the number of monomials of $P/\langle Chains_\ga^\gc(\gb)\rangle$ is $\leq$ the number of monomials of $P/\init I$, it suffices to give a degree-preserving injection from the set of all monomials in $P/\langle Chains_\ga^\gc(\gb)\rangle$ to the set of all standard monomials on $Y_\ga^\gc(\gb)$. We construct such an injection, the \textbf{Orthogonal-bounded-RSK (OBRSK)}, from an indexing set of the former to an indexing set of the later. These indexing sets are given in the table of figure ~\ref{f.i.main_objects}.\\

\begin{figure}[!h]
\begin{center}
\begin{tabular}{|l|l|}\hline
\multicolumn{1}{|c|}{\slshape Set of elements in $P=\field[X_{(r,c)}\st (r,c)\in\orootsb]$}&
\multicolumn{1}{c|}{\slshape Indexing set}\\
\hline\hline
&{pairs of non-vanishing skew-symmetric multisets}\\
\raisebox{.5em}[0pt]{monomials of $P/\langle Chains_\ga^\gc(\gb)\rangle$}& on $\BBZ$ bounded by $T_\ga, W_\gc$\\
\hline
&{non-vanishing skew-symmetric notched bitableaux}\\
\raisebox{.5em}[0pt]{standard monomials on $Y_\ga^\gc(\gb)$}&on $\BBZ$
bounded by $T_\ga, W_\gc$
\\
\hline
\end{tabular}
\end{center}
\caption{\label{f.i.main_objects}Two subsets of the ring $P=\field[X_{(r,c)}\st (r,c)\in\orootsb]$ and
their indexing sets}
\end{figure}

\noindent In Sections \ref{s.ss_lex_arrays},
\ref{s.ss_notched_bitab}, \ref{s.obrsk}, and \ref{s.gobrsk_props}, we develop the necessary things and finally also 
define pairs of non-vanishing skew-symmetric multisets on $\BBZ$ bounded by $T_\ga, W_\gc$,
non-vanishing skew-symmetric notched bitableaux on $\BBZ$ bounded by
$T_\ga, W_\gc$, and the injection \textbf{OBRSK} from the former to the
latter. In Section \ref{s.grobner}, we prove that these two
combinatorial objects are indeed indexing sets for the monomials
of $P/\langle Chains_\ga^\gc(\gb)\rangle$ and the standard monomials on
$Y_\ga^\gc(\gb)$ respectively, and use this to prove Theorem~\ref{t.main}.

\section{Skew-symmetric Notched Bitableaux}\label{s.ss_notched_bitab}
This section onwards, the terminology and notation of \S 4 and \S 5 of \cite{Kr-bkrs} will be in force. Recall the definition of a \textbf{semistandard} notched bitableau from \S 5 of \cite{Kr-bkrs}. 

\bdefn\label{defn.dual-in-bitableau}
\textbf{Dual of an element with respect to a semistandard notched bitableau}: Let $(P,Q)$ be a semistandard notched bitableau. Let $p_{i,j}$ (resp. $q_{i,j}$) denote the entry in the $i$-th row and $j$-th column of $P$ (resp. $Q$). For any row number $i$ of $P$ (or of $Q$), let  $k_i$ denote the total number of entries in the $i$-th row of $P$ (or $Q$). We call the entry $q_{i,k_i+1-j}$ of $Q$ to be the \textbf{dual of} the entry $p_{i,j}$ of $P$ \textbf{with respect to} $(P,Q)$. Similarly, we call the entry $p_{i,k_i+1-j}$ of $P$ to be the \textbf{dual of} the entry $q_{i,j}$ of $Q$ \textbf{with respect to} $(P,Q)$.\edefn 

Note that any entry of $P$ or $Q$ can be identified uniquely by specifying $4$ coordinates, namely: the entry $x$, the tableau $A$ ($A=P$ or $Q$) in which the entry lies, the row number $i$ of the entry in the tableau $A$, and the column number $j$ of the entry in the tableau $A$. Let $Set(P,Q)$ denote the set of all $4$-tuples of the form $(x,A,i,j)$. Given any $4$-tuple $(x,A,i,j)\in Set(P,Q)$, let us denote by $D_{(P,Q)}(x,A,i,j)$ the dual of $x$ with respect to $(P,Q)$ as defined in \ref{defn.dual-in-bitableau} above. For $(x,A,i,j), (x',A',i',j')\in Set(P,Q)$, we say that $(x,A,i,j)\leq (x',A',i',j')$ if $x\leq x'$, and similarly for strict inequality and equality.

A semistandard notched bitableau $(P,Q)$ is said to be \textbf{Skew-symmetric} if the following 2 conditions are satisfied simultaneously:---\\
(i) The bitableau $(P,Q)$ should be of {\em even} size, that is, the number of elements in each row of $P$ and $Q$ should be even.

\noindent (ii) If $(x,A,i,j), (x',A',i',j')\in Set(P,Q)$ are such that $(x,A,i,j)\leq (x',A',i',j')$, then $D_{(P,Q)}(x,A,i,j)\geq D_{(P,Q)}(x',A',i',j')$. Moreover, $(x,A,i,j)<(x',A',i',j')$ implies $D_{(P,Q)}(x,A,i,j)>D_{(P,Q)}(x',A',i',j')$ and $(x,A,i,j)=(x',A',i',j')$ implies $D_{(P,Q)}(x,A,i,j)=D_{(P,Q)}(x',A',i',j')$.

\noindent Property (ii) above will be henceforth referred to as \textbf{the duality property} associated to the Skew-symmetric notched bitableau $(P,Q)$. Note that a \textbf{Skew-symmetric} notched bitableau is \textbf{a semistandard notched bitableau} by default. The \textbf{degree} of a Skew-symmetric notched bitableau $(P,Q)$ is the total number of boxes in $P$ (or $Q$). The notions of \textbf{negative, positive} and \textbf{nonvanishing} Skew-symmetric notched bitableau remain the same as in \S 5 of \cite{Kr-bkrs}. The notion of a Skew-symmetric notched bitableau $(P,Q)$ being \textbf{bounded by} $\mathbf{T, W}$ (where $T,W$ are subsets of $\bN^2$), as well as the notion of \textbf{negative} and \textbf{positive parts} of a Skew-symmetric notched bitableau $(P,Q)$ remain the same as they were in \S 5 of \cite{Kr-bkrs}. 

If $(P,Q)$ is a nonvanishing skew-symmetric notched bitableau,
define $\gi(P,Q)$ to be the notched bitableau obtained by
reversing the order of the rows of $(Q,P)$. One checks that
$\gi(P,Q)$ is a nonvanishing skew-symmetric notched bitableau. The
map $\gi$ is an involution, and it maps negative skew-symmetric
notched bitableaux to positive ones and visa-versa. Thus $\gi$
gives a bijective pairing between the sets of negative and
positive skew-symmetric notched bitableaux. 
\section{Skew-symmetric lexicographic arrays}\label{s.ss_lex_arrays}
By a two-row \textbf{lexicographic array}, we mean: A two-row array of positive integers
\begin{equation}\pi= \left(\begin{array}{ccc}
\beta_1 & \cdots & \beta_t\\
\alpha_1 & \cdots & \alpha_t\\ \end{array}\right)\end{equation} 
such that $\beta_k\geq\beta_{k+1}\ \forall\ k$, and if $\beta_k=\beta_{k+1}$, then $\alpha_k\geq\alpha_{k+1},\ k=1,\ldots,t-1$. 

Given a lexicographic array $\pi$, let $\pi^t$ denote the array (not necessarily lexicographic) obtained by switching the two rows of $\pi$. We call the array $\pi^t$ to be the \textbf{transpose of the array $\pi$}.

Consider a pair $\{\pi_1,\pi_2\}$ of two-row arrays (not necessarily lexicographic) where both $\pi_1$ and $\pi_2$ are of the same \textbf{degree} (say, $t$, the degree of a two-row array is the number of columns in the array) of positive integers where $\pi_1$ and $\pi_2$ are given by:---

\begin{equation}\label{eq.arrays}\pi_1= \left(\begin{array}{ccc}
b_1 & \cdots & b_t\\
a_1 & \cdots & a_t\\ \end{array}\right)\ and\ \pi_2= \left(\begin{array}{ccc}
c_1 & \cdots & c_t\\
d_1 & \cdots & d_t\\ \end{array}\right)\end{equation} 

We call the lower row of the array $\pi_1$ the \textbf{$a$-row}, the upper row of the array $\pi_1$ the \textbf{$b$-row}, the lower row of the array $\pi_2$ the \textbf{$d$-row} and, the upper row of the array $\pi_2$ the \textbf{$c$-row}. Any entry in the pair $\{\pi_1,\pi_2\}$ of arrays can be identified uniquely by specifying $3$ coordinates: the row $\mathcal{r}$ of $\{\pi_1,\pi_2\}$ in which the entry lies ($\mathcal{r}=a,b,c\ or\ d$), the position $i$ (counting from left to right) of the entry in the row $\mathcal{r}$ and, the value $\mathcal{v}(\mathcal{r},i)$ of the entry sitting in the $i$-th position of the row $\mathcal{r}$.   
 
Set $S_{\pi_1,\pi_2}:=\{x|x=(\mathcal{r},i,\mathcal{v}(\mathcal{r},i)),\mathcal{r}\in\{a,b,c,d\},i\in\{1,\ldots,t\}\}$. For any $x\in S_{\pi_1,\pi_2}$, let 
\begin{center}
$D_{\pi_1,\pi_2}(x):=\left\{\begin{array}{cccc}
(c,t+1-i,\mathcal{v}(c,t+1-i)) & if & x=(a,i,\mathcal{v}(a,i)) & \\
(d,t+1-i,\mathcal{v}(d,t+1-i)) & if & x=(b,i,\mathcal{v}(b,i)) & \forall\ i\in\{1,\ldots,t\}\\
(a,t+1-i,\mathcal{v}(a,t+1-i)) & if & x=(c,i,\mathcal{v}(c,i)) & \\
(b,t+1-i,\mathcal{v}(b,t+1-i)) & if & x=(d,i,\mathcal{v}(d,i)) & \\ \end{array}\right.$
\end{center} 

We call $D_{\pi_1,\pi_2}(x)$ the \textbf{Dual of $x$ with respect to the pair $\{\pi_1,\pi_2\}$ of arrays}. Note that for every $x\in S_{\pi_1,\pi_2}$, we have $D_{\pi_1,\pi_2}(x)\in S_{\pi_1,\pi_2}$. For any two elements $x,x'\in S_{\pi_1,\pi_2}$ where $x=(\mathcal{r},i,\mathcal{v}(\mathcal{r},i))$ and $x'=(\mathcal{r}',i',\mathcal{v}(\mathcal{r}',i'))$, we say that $x\leq x'$ if $\mathcal{v}(\mathcal{r},i)\leq \mathcal{v}(\mathcal{r}',i')$. Similar notion applies to saying that $x<x'$ or $x=x'$. 

The above pair $\{\pi_1,\pi_2\}$ of arrays is said to be \textbf{Skew-symmetric lexicographic} if the following conditions are satisfied simultaneously:---

(i) $\pi_1$ is a lexicographic array.

(ii) $\pi_2^t$ is a lexicographic array.

(iii) $a_i<d_{t+1-i}\forall\ i\in\{1,\ldots,t\}$.

(iv) $b_i<c_{t+1-i}\forall\ i\in\{1,\ldots,t\}$.

(v) For any $x,y\in S_{\pi_1,\pi_2}$, if $x\leq y$, then $D_{\pi_1,\pi_2}(x)\geq D_{\pi_1,\pi_2}(y)$. Also strict inequality on one side implies strict inequality on the other side, in the sense that if $x<y$, then $D_{\pi_1,\pi_2}(x)>D_{\pi_1,\pi_2}(y)$. And similarly for equality. ($\rightarrow$ This property is called the \textbf{Duality Property} associated to the pair $\{\pi_1,\pi_2\}$ of Skew-symmetric lexicographic arrays.)

(vi) For each $k\in\{1,\ldots,t\}$, if $a_k<b_k$, then $d_{t+1-k}<c_{t+1-k}$, and if $a_k>b_k$, then $d_{t+1-k}>c_{t+1-k}$.

For any pair $\{\pi_1,\pi_2\}$ of Skew-symmetric lexicographic arrays, we define the \textbf{degree of the pair} to be $2$ times the degree of $\pi_1$ (or of $\pi_2$, they are the same). A pair $\{\pi_1,\pi_2\}$ of Skew-symmetric lexicographic arrays is said to be \textbf{negative} if $a_k<b_k,k=1,\ldots,t$, \textbf{positive} if $a_k>b_k,k=1,\ldots,t$, and \textbf{non-vanishing} if $a_k\neq b_k,k=1,\ldots,t$. Note that condition (vi) above will imply that if $a_k<b_k\forall k=1,\ldots,t$, then $d_{t+1-k}<c_{t+1-k}\forall k=1,\ldots,t$. Similarly, if $a_k>b_k\forall k=1,\ldots,t$, then $d_{t+1-k}>c_{t+1-k}\forall k=1,\ldots,t$ and if $a_k\neq b_k\forall k=1,\ldots,t$, then $d_{t+1-k}\neq c_{t+1-k}\forall k=1,\ldots,t$. 

Let $\{\pi_1,\pi_2\}$ be a pair of non-vanishing Skew-symmetric lexicographic arrays. Let us denote by $\pi_1^-$ (resp. $\pi_1^+$) the lexicographic array consisting of those columns of $\pi_1$ such that $a_i<b_i$ (resp. $a_i>b_i$). Let us denote by $\pi_2^-$ (resp. $\pi_2^+$) the lexicographic array consisting of those columns of $\pi_2$ such that $d_i<c_i$ (resp. $d_i>c_i$). We call $\{\pi_1^-,\pi_2^-\}$ and $\{\pi_1^+,\pi_2^+\}$ to be the \textbf{negative} and \textbf{positive parts} respectively of the pair $\{\pi_1,\pi_2\}$. Note here that because of condition (vi) above, $\pi_1^-$ and $\pi_2^-$ will have the same degree, and the same holds true for $\pi_1^+$ and $\pi_2^+$. It is easy to see now that both the pairs $\{\pi_1^-,\pi_2^-\}$ and $\{\pi_1^+,\pi_2^+\}$ of arrays are Skew-symmetric lexicographic in their own right. 

Given a lexicographic array $\pi$, define $l(\pi)$ to be the lexicographic array obtained by first switching the two rows of $\pi$ and then rearranging the columns so that the new array is lexicographic. Let $l^t$ be a map from the set of all lexicographic arrays to itself given by first switching the two rows of a given lexicographic array $\pi$, and then rearranging the columns so that the resulting array's transpose becomes lexicographic.

We now define a map $L$ from the set of all pairs of Skew-symmetric lexicographic arrays to itself, as follows:---
\begin{center}
$L(\{\pi_1,\pi_2\}):=\{l(\pi_1),l^t(\pi_2)\}$
\end{center}  
It is easy to check that the above map $L$ is well-defined, it is an involution, and it maps pairs of negative Skew-symmetric lexicographic arrays to positive ones, and vice-versa. Thus $L$ gives a bijective pairing between the set of all pairs of negative Skew-symmetric lexicographic arrays and the set of all pairs of positive Skew-symmetric lexicographic arrays.

\section{The Orthogonal-Bounded RSK Correspondence}\label{s.obrsk}
We next define the \textbf{Orthogonal bounded RSK correspondence, OBRSK} a function which maps a pair of negative Skew-symmetric lexicographic arrays to a negative Skew-symmetric notched bitableau. Let $\{\pi_1,\pi_2\}$ be a pair of negative Skew-symmetric lexicographic arrays whose entries are labelled as in \ref{eq.arrays}. We inductively form a sequence of notched
bitableaux $(P^{(0)},Q^{(0)})$, $(P^{(1)},Q^{(1)}),$
$\ldots,(P^{(t)},Q^{(t)})$, such that each $(P^{(i)},Q^{(i)})$ is of even size and $P^{(i)}$ is semistandard on
$b_i$ for every $i=1,\ldots,t$, as follows:\\
\begin{quote}
Let $(P^{(0)},Q^{(0)})=(\es,\es)$, and let $b_0=b_1$. Assume
inductively that we have formed $(P^{(i)}, Q^{(i)})$, such that the notched bitableau $(P^{(i)}, Q^{(i)})$ is of even size, $P^{(i)}$ is semistandard on $b_i$, and thus on $b_{i+1}$, since
$b_{i+1}\leq b_i$. 

Let us first fix some notation and terminology. Let $p_{kj}^{(i)}$ (resp. $q_{kj}^{(i)}$) denote the entry in the $k$-th row and $j$-th column of $P^{(i)}$ (resp. $Q^{(i)}$). Let $2l_k^{(i)}$ denote the total number of entries (note that it is always even) in the $k$-th row of $P^{(i)}$ (or $Q^{(i)}$). 

Given an arbitrary notched tableau $P$, and any row number $k$ of $P$, we call the entry in the $j$-th box \textbf{(counting from left to right)} as the \textbf{Forward $j$-th entry of the $k$-th row of $P$}. Similarly, we call the entry in the $j$-th box \textbf{(counting from right to left)} of $P$ as the \textbf{Backward $j$-th entry of the $k$-th row of $P$}. 

It is now easy to see that the backward $j$-th entry of the $k$-th row of $Q^{(i)}$ is actually equal to the forward $(2l_k^{(i)}+1-j)$-th entry of $Q^{(i)}$. We now describe the \textbf{OBRSK} correspondence for the pair $\{\pi_1,\pi_2\}$ of negative Skew-symmetric lexicographic arrays as mentioned above in \ref{eq.arrays}. 

Perform the bounded insertion process $P^{(i)}\la{b_{i+1}}a_{i+1}$ as in \cite{Kr-bkrs}. In this finite-step process of bounded insertion, suppose that $a_{i+1}$ had bumped the `Forward $j_1$-th entry' of the $1$-st row of $P^{(i)<b_{i+1}}$, again say the `Forward $j_1$-th entry' of the $1$-st row of $P^{(i)<b_{i+1}}$ has bumped the `Forward $j_2$-th entry' of the $2$-nd row of $P^{(i)<b_{i+1}}$,... and so on until, at some point, a number is placed in a new box at the right end of some row of $P^{(i)<b_{i+1}}$, say this happens at the row number $K_{(i)}$ of $P^{(i)<b_{i+1}}$. Say that the entry of the new box (as mentioned in the previous statement) becomes the \textbf{Forward $j_{K_{(i)}}$-th entry of the $K_{(i)}$-th row of $P^{(i)}\la{b_{i+1}}a_{i+1}$}. Then we construct a new notched tableau (call it $Q^{(i)}\la{dual}c_{t-i}$) out of the tableau $Q^{(i)}$ and the entry $c_{t+1-(i+1)}(=c_{t-i})$ (note that $c_{t+1-(i+1)}$ is the same as $D_{\pi_1,\pi_2}(a_{i+1})$) of the array $\pi_2$ as follows:-- We let $c_{t-i}$ bump the `Backward $j_1$-th entry' of the $1$-st row of $Q^{(i)}$, then we let the `Backward $j_1$-th entry' of the $1$-st row of $Q^{(i)}$ bump the `Backward $j_2$-th entry' of the $2$-nd row of $Q^{(i)}$,... and so on until, at some point, a number is placed in a new box at the \textbf{Backward $j_{K_{(i)}}$-th position of the $K_{(i)}$-th row of $Q^{(i)}$}, shifting all entries in the Backward $1$-st ... upto (and including) the Backward $(j_{K_{(i)}}-1)$-th positions of the $K_{(i)}$-th row of $Q^{(i)}$ to the right by one box. We denote the resulting notched tableau by $Q^{(i)}\la{dual}c_{t-i}$. 

Note here that this integer $K_{(i)}$ can be equal to $1$ in some cases, then there are no `bumps' in the process of bounded insertion $P^{(i)}\la{b_{i+1}}a_{i+1}$. In such situations, look at the position of $a_{i+1}$ in the first row of the notched tableau $P^{(i)}\la{b_{i+1}}a_{i+1}$, say $a_{i+1}$ is the forward $j$-th entry of the $1$-st row of $P^{(i)}\la{b_{i+1}}a_{i+1}$. Then we place $c_{t-i}$ in a new box at the backward $j$-th position of the $1$-st row of $Q^{(i)}$, shifting all those entries which were in the Backward $1$-st ... upto (and including) the backward $(j-1)$-th positions of the $1$-st row of $Q^{(i)}$ to the right by one box. We denote the resulting notched tableau by $Q^{(i)}\la{dual}c_{t-i}$.

Basically, the idea is that whatever we did for the bounded insertion process producing $P^{(i)}\la{b_{i+1}}a_{i+1}$, we do a dual version of the same process on $Q^{(i)}$ with the integer $c_{t-i}$. Let us denote the resulting tableau by $Q^{(i)}\la{dual}c_{t-i}$. Note here that the tableaux $P^{(i)}\la{b_{i+1}}a_{i+1}$ and $Q^{(i)}\la{dual}c_{t-i}$ so constructed are of the same shape, but there exists one row in both of them in which the total number of entries is {\em odd}. We wanted to construct a notched bitableau $(P^{(i+1)}, Q^{(i+1)})$ inductively from $(P^{(i)}, Q^{(i)})$ which should be of even size. We make it possible in the following way:---

Recall the row number $K_{(i)}$ of $P^{(i)}$ (or of $Q^{(i)}$) at which the above mentioned insertion algorithm had stopped. Place $d_{t+1-(i+1)}$ ($=d_{t-i}$) in a new box at the rightmost end of the $K_{(i)}$-th row of $P^{(i)}\la{b_{i+1}}a_{i+1}$. We denote the resulting notched tableau by $P^{(i+1)}$. By the construction of $P^{(i)}\la{b_{i+1}}a_{i+1}$ (and as explained in \cite{Kr-bkrs}), we know that $P^{(i)}\la{b_{i+1}}a_{i+1}$ is semistandard on $b_{i+1}$. It is an easy exercise now to see that $P^{(i+1)}$ as constructed above will also continue to be semistandard on $b_{i+1}$, well the reason briefly is that $d_{t-i}$ is bigger than or equal to all entries of $P^{(i)}\la{b_{i+1}}a_{i+1}$ (This follows from the defining properties of the pair of negative Skew-symmetric lexicographic arrays $\{\pi_1,\pi_2\}$). After this, we place $b_{i+1}$ in a new box at the leftmost end of the $K_{(i)}$-th row of $Q^{(i)}\la{dual}c_{t-i}$, shifting all previously existing entries in the $K_{(i)}$-th row of $Q^{(i)}\la{dual}c_{t-i}$ to the right by one box. We denote the resulting notched tableau by $Q^{(i+1)}$. Clearly $P^{(i+1)}$ and $Q^{(i+1)}$ have the same shape. Now we have got hold of a notched bitableau $(P^{(i+1)}, Q^{(i+1)})$ which is of even size.  
\end{quote}
Then $OBRSK(\{\pi_1,\pi_2\})$ is defined to be $(P^{(t)},Q^{(t)})$.

In the process above, we write
$(P^{(i+1)},Q^{(i+1)})=(P^{(i)},Q^{(i)})\la{b_{i+1},c_{t+1-(i+1)}}a_{i+1},d_{t+1-(i+1)}$. In terms of this notation,
$$OBRSK(\{\pi_1,\pi_2\})=((\es,\es)\la{b_1,c_t}a_1,d_t)\cdots\la{b_t,c_1}a_t,d_1.$$ 

\begin{lem}\label{l.P(i)_row_strict}
With notation as in the definition of the $OBRSK$ correspondence mentioned above, $P^{(i)}$ is row strict for all $i\in\{1,\ldots,t\}$.
\end{lem}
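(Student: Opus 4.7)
I would prove this by induction on $i$. The base case $P^{(0)}=\es$ is vacuously row strict. For the inductive step, assume $P^{(i)}$ is row strict; the goal is to verify row strictness of $P^{(i+1)}$. Recall that $P^{(i+1)}$ is constructed from $P^{(i)}$ in two stages: (1) the bounded insertion $P^{(i)}\la{b_{i+1}}a_{i+1}$, and (2) appending $d_{t-i}$ at the rightmost end of row $K_{(i)}$ of the tableau resulting from stage (1).

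Stage (1) preserves row strictness. At each row along the bumping path, the inserted element $x$ replaces the leftmost entry of that row strictly greater than $x$ (subject to the bound $b_{i+1}$); the replacement value is strictly smaller than the replaced value, and by the inductive hypothesis the row was strictly increasing to begin with, so it remains strictly increasing after the replacement. When the bumping chain terminates in row $K_{(i)}$, the final bumped element (or $a_{i+1}$ itself, when $K_{(i)}=1$) is placed at the rightmost end in a position that preserves strict increase by virtue of the bounded insertion rules. This is a routine check using the construction of \cite{Kr-bkrs}.

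For stage (2), the task is to show that $d_{t-i}$ is strictly greater than the current rightmost entry of row $K_{(i)}$ of $P^{(i)}\la{b_{i+1}}a_{i+1}$. The entries of this tableau lie in $\{a_1,\ldots,a_{i+1}\}\cup\{d_t,d_{t-1},\ldots,d_{t+1-i}\}$. Since $\pi_2^t$ is lexicographic, $d_1\geq d_2\geq\cdots\geq d_t$, so $d_{t+1-j}\leq d_{t-i}$ for every $j\leq i+1$; combined with condition (iii), this gives $a_j<d_{t+1-j}\leq d_{t-i}$ strictly. Hence if the rightmost entry of row $K_{(i)}$ is an $a$-value, the appended $d_{t-i}$ strictly exceeds it, and row strictness is preserved in this subcase.

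The main obstacle is the case in which the rightmost entry of row $K_{(i)}$ is some $d_{t+1-j}$ (with $j\leq i$) that happens to equal $d_{t-i}$. To rule this out, I would track the migration of each $d$-value through the bumping processes of stages $j+1,\ldots,i+1$. The key observations are: (a) each $d_{t+1-j}$ is originally placed at the rightmost end of some row $K_{(j-1)}$ and, at the moment of placement, is strictly greater than every other entry of that row; (b) subsequent bumpings can relocate $d_{t+1-j}$ only if some strictly smaller element is inserted into its row and no entry strictly between them exists; and (c) the column-strict property of $P^{(i)}$ (which accompanies semistandardness in the sense of \cite{Kr-bkrs}) forces distinct copies of an equal $d$-value into distinct columns. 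Combining these with the lexicographic structure of $\pi_2^t$ and conditions (iii)-(vi), one shows that no copy of the value $d_{t-i}$ can occupy the rightmost position of row $K_{(i)}$ at the conclusion of stage (1). This fine analysis of the bumping chain interacting with the lexicographic order is where the real technical work lies; once settled, the inductive step, and hence the lemma, follows.
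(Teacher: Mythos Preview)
Your overall structure---induction on $i$, splitting the inductive step into the bounded insertion stage and the $d$-appending stage, and handling the case where the rightmost entry of row $K_{(i)}$ is an $a$-value---matches the paper's proof exactly, and those parts are fine.

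The gap is in your treatment of the hard case, when the rightmost entry of row $K_{(i)}$ in $P^{(i)}\la{b_{i+1}}a_{i+1}$ is some $d_{j_0}$ with $d_{j_0}=d_{t-i}$. Your observations (a)--(c) do not close the argument. In particular, observation (c) appeals to a ``column-strict property of $P^{(i)}$ (which accompanies semistandardness in the sense of \cite{Kr-bkrs})''; but the only column condition available is that $P^{(i)}$ is semistandard on $b_{i+1}$, which constrains only the part $(P^{(i)})^{<b_{i+1}}$. The $d$-entries need not lie in that part, so no column constraint applies to them, and nothing forces two equal $d$-values into distinct rows. You also concede that ``the real technical work lies'' in an analysis you do not carry out, so as written this step is genuinely missing.

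The paper resolves this case by a different, and much shorter, route that you overlook: it uses the duality axiom (v) for the pair $\{\pi_1,\pi_2\}$. From $d_{j_0}=d_{t-i}$ one gets, by duality, $b_{t+1-j_0}=b_{i+1}$ with $t+1-j_0<i+1$. The paper then appeals to the following fact (stated as an easy exercise): if $l<l'$ and $b_l=b_{l'}$, then in $P^{(l')}$ the entry $d_{t+1-l'}$ sits in a strictly lower row than $d_{t+1-l}$. Applying this with $l=t+1-j_0$ and $l'=i+1$ shows that $d_{j_0}$ and $d_{t-i}$ cannot occupy the same row of $P^{(i+1)}$, a contradiction. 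So the missing idea in your proposal is precisely this use of duality to convert the $d$-equality into a $b$-equality, after which a standard RSK-type row-monotonicity argument for insertions with the same bound finishes the job.
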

\begin{Proof}
We will prove the lemma by induction on $i$. The base case (i.e., when $i=1$) of induction is easy to see. 

Now let $i\in\{1,\ldots,t-1\}$. Assume inductively that $P^{(i)}$ is row strict. We will now prove that $P^{(i+1)}$ is row strict. That $P^{(i)}\la{b_{i+1}}a_{i+1}$ is row strict follows in the same way as in \cite{Kr-bkrs}. Note that $P^{(i+1)}$ is obtained from $P^{(i)}\la{b_{i+1}}a_{i+1}$ by adding $d_{t-i}$ at the rightmost end of some row of $P^{(i)}\la{b_{i+1}}a_{i+1}$, say the $k$-th row. It now suffices to ensure that $d_{t-i}$ is strictly bigger than all entries in the $k$-th row of $P^{(i)}\la{b_{i+1}}a_{i+1}$. It follows from the defining properties of the pair of negative skew-symmetric lexicographic arrays $\{\pi_1,\pi_2\}$ that $d_{t-i}$ is bigger than or equal to all entries of $P^{(i)}\la{b_{i+1}}a_{i+1}$. But here we need to prove something sharper, namely: $d_{t-i}$ is strictly bigger than all entries in the $k$-th row of $P^{(i)}\la{b_{i+1}}a_{i+1}$. We will prove this now.

Clearly, all the entries of $P^{(i)}\la{b_{i+1}}a_{i+1}$ are contained in $\{a_1,\ldots,a_{i+1}\}\disjunion\{d_t,\ldots,d_{t+1-i}\}$. Also, it is easy to observe that $a_j<d_{t-i}\ \forall\ j\in\{1,\ldots,i+1\}$. So if the rightmost element of the $k$-th row of $P^{(i)}\la{b_{i+1}}a_{i+1}$ equals $a_j$ for some $j\in\{1,\ldots,i+1\}$, then we are done. Otherwise, the element in the rightmost end of the $k$-th row of $P^{(i)}\la{b_{i+1}}a_{i+1}$ is $d_j$ for some $j\in\{t+1-i,\ldots,t\}$ (say $j_0$). If $d_{j_0}<d_{t-i}$, then we are done. If not, then clearly $d_{j_0}=d_{t-i}$. It then follows from duality that $b_{t+1-j_0}=b_{i+1}$ and it is also clear that $t+1-j_0<i+1$. 

But it is an easy exercise to check that if $l,l'\in\{1,\ldots,t\}$ are such that $l<l'$ and $b_l=b_{l'}$, then the number of the row in which $d_{t+1-l'}$ lies in $P^{(l')}$ is strictly bigger than the number of the row in which $d_{t+1-l}$ lies in $P^{(l')}$ (Here, the row number is counted from top to bottom). So $d_{j_0}$ and $d_{t-i}$ cannot lie in the same row of $P^{(i+1)}$, a contradiction. Hence proved.
\end{Proof}

\bex\label{ex.obrsk}
Let $\pi_1= \left(\begin{array}{ccccc}
17 & 17 & 14 & 10 & 9\\
4 & 3 & 3 & 7 & 4\\ \end{array}\right)\ and\ \pi_2= \left(\begin{array}{ccccc}
25 & 22 & 26 & 26 & 25\\
20 & 19 & 15 & 12 & 12\\ \end{array}\right)$. 
Since two-digit integers are not fit for the Young tableaux package used here for typesetting in latex, we will use some single letter notation for the entries in the above mentioned pair of negative Skew-symmetric lexicographic arrays, and the notation is given as follows:--
$\pi_1= \left(\begin{array}{ccccc}
A & B & C & D & E\\
F & G & H & I & J\\ \end{array}\right)\ and\ \pi_2= \left(\begin{array}{ccccc}
K & L & M & N & O\\
P & Q & R & S & T\\ \end{array}\right)$
where $A=17, B=17, C=14, D=10, E=9, F=4, G=3, H=3, I=7, J=4$ and $K=25, L=22, M=26, N=26, O=25, P=20, Q=19, R=15, S=12, T=12$. 
Then
\[
\begin{array}{l@{\hspace{.9cm}}l}
P^{(0)}=\es
&Q^{(0)}=\es\\ \\
P^{(0)}\la{A}F=\young(F)
&Q^{(0)}\la{dual}O=\young(O)\\ \\
P^{(1)}=\young(FT)
&Q^{(1)}=\young(AO)\\ \\
P^{(1)}\la{B}G=\young(FT)\la{B}G=\young(GT,F) 
&Q^{(1)}\la{dual}N=\young(AO)\la{dual}N=\young(AN,O)\\ \\
P^{(2)}=\young(GT,FS)
&Q^{(2)}=\young(AN,BO)\\ \\
P^{(2)}\la{C}H=\young(GT,FS)\la{C}H=\young(HT,GS,F) 
&Q^{(2)}\la{dual}M=\young(AN,BO)\la{dual}M=\young(AM,BN,O)\\ \\
P^{(3)}=\young(HT,GS,FR)
&Q^{(3)}=\young(AM,BN,CO)\\ \\
P^{(3)}\la{D}I=\young(HT,GS,FR)\la{D}I=\young(HIT,GS,FR) 
&Q^{(3)}\la{dual}L=\young(AM,BN,CO)\la{dual}L=\young(ALM,BN,CO)\\ \\
P^{(4)}=\young(HITQ,GS,FR)
&Q^{(4)}=\young(DALM,BN,CO)\\ \\
P^{(4)}\la{E}J=\young(HITQ,GS,FR)\la{E}J=\young(HJTQ,GIS,FR) 
&Q^{(4)}\la{dual}K=\young(DALM,BN,CO)\la{dual}K=\young(DAKM,BLN,CO)\\ \\ 
\end{array}
\]

\[
\begin{array}{l@{\hspace{.9cm}}l}
P^{(5)}=\young(HJTQ,GISP,FR)
&Q^{(5)}=\young(DAKM,EBLN,CO)\\ \\
\end{array}
\]
\vspace{.5em}

Therefore $OBRSK(\{\pi_1,\pi_2\})=\left(\, \young(HJTQ,GISP,FR)\ ,\
\young(DAKM,EBLN,CO)\right)$.
\eex
\vspace{.5em}
\noindent The proof of the following lemma appears in Section \ref{s.Proofs}.

\begin{lem}\label{l.neg-skewlexarrays-to-neg-skew-bitab}If $\{\pi_1,\pi_2\}$ is a pair of negative Skew-symmetric lexicographic arrays, then $OBRSK(\{\pi_1,\pi_2\})$ is a negative Skew-symmetric notched bitableau. \end{lem}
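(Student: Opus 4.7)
The plan is to proceed by induction on $t$, the common degree of $\pi_1$ and $\pi_2$. The base case $t=0$ gives $(P^{(0)},Q^{(0)})=(\es,\es)$, which vacuously satisfies all required properties. For the inductive step, suppose $(P^{(i)},Q^{(i)})$ is a negative Skew-symmetric notched bitableau of (even) size $2i$ and in which, for each row $k$, the forward $j$-th entry of $P^{(i)}$ is dual to the backward $j$-th entry of $Q^{(i)}$; I would then verify the four defining properties for $(P^{(i+1)},Q^{(i+1)})$: correct (even, matching) shape, semistandardness of each tableau, negativity, and the duality property.

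First I would handle the shape and evenness. By construction, the bounded insertion $P^{(i)}\la{b_{i+1}}a_{i+1}$ places exactly one new box at the end of some row $K_{(i)}$, and the dual insertion $Q^{(i)}\la{dual}c_{t-i}$ is defined so as to place its new box at the backward $j_{K_{(i)}}$-th position of the same row $K_{(i)}$; the subsequent placements of $d_{t-i}$ at the rightmost end of row $K_{(i)}$ in $P$ and of $b_{i+1}$ at the leftmost end of row $K_{(i)}$ in $Q$ add exactly one further box to each, so $P^{(i+1)}$ and $Q^{(i+1)}$ have the same shape and each row has even length. Semistandardness of $P^{(i+1)}$ then follows from Lemma~\ref{l.P(i)_row_strict} (for row-strictness) together with Kreiman's analysis of the bounded row insertion in \cite{Kr-bkrs} (for column-weak-monotonicity), supplemented by the observation, already used in the proof of Lemma~\ref{l.P(i)_row_strict}, that $d_{t-i}$ is strictly larger than any entry of row $K_{(i)}$ of $P^{(i)}\la{b_{i+1}}a_{i+1}$. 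The corresponding statement for $Q^{(i+1)}$ is obtained by applying Kreiman's results to the row-reversed tableau: the dual insertion of $c_{t-i}$ into $Q^{(i)}$ is, after reflecting each row left-to-right, an ordinary bounded insertion, to which the same lemmas apply after reindexing; appending $b_{i+1}$ at the leftmost end of row $K_{(i)}$ preserves the reflected row/column conditions by the inductive hypothesis on duality together with $b_{i+1}\leq b_i$.

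The main obstacle is the duality property. The key structural fact I would establish inductively is that the forward $j$-th entry of row $k$ of $P^{(i)}$ sits in dual position to the backward $j$-th entry of row $k$ of $Q^{(i)}$, and their values are related by $D_{\pi_1,\pi_2}$ applied to the corresponding array entries. Under the simultaneous step $(P^{(i)},Q^{(i)})\la{b_{i+1},c_{t-i}}a_{i+1},d_{t-i}$, the bumping path in $P$ produced by inserting $a_{i+1}$ and the bumping path in $Q$ produced by the dual insertion of $c_{t-i}$ occupy mutually reflected positions in each row by the very definition of the dual insertion; this means the newly positioned entries remain duals of each other after the step, and the previously placed entries retain their dual partners (possibly having shifted within their row, but by the same number of boxes on the mirrored side). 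The new pair $(d_{t-i}, b_{i+1})$ attached to row $K_{(i)}$ is dual by construction, and the required numerical comparisons $a_{i+1}<d_{t-i}$, $b_{i+1}<c_{t-i}$, etc., come directly from conditions (iii)--(v) of the definition of a pair of Skew-symmetric lexicographic arrays. The verification of the order-reversing duality then reduces to checking: (a) if a new entry $x$ in $P^{(i+1)}$ (respectively its dual in $Q^{(i+1)}$) is compared against an old entry $y$, then condition (v) applied to the corresponding array entries gives the required inequality on duals; (b) equalities in values force equalities in the duals, which again is exactly the equality clause of condition (v). Finally, negativity of the output bitableau is the statement $a_{i+1}<b_{i+1}$ and $d_{t-i}<c_{t-i}$ for every step, both of which are built into the hypothesis that $\{\pi_1,\pi_2\}$ is a negative pair (condition (vi) being precisely the propagation of $a_k<b_k$ to $d_{t+1-k}<c_{t+1-k}$). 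Applying this inductive scheme to $i=t-1$ yields the desired conclusion.
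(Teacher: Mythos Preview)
Your proposal has a genuine gap: you have misidentified what ``semistandard'' means for a notched bitableau. A semistandard notched bitableau $(P,Q)$ in the sense of \cite{Kr-bkrs} is \emph{not} defined by column-weak-monotonicity of $P$ and $Q$ separately; rather, the defining condition (beyond row-strictness) is the joint inequality
\[
P_i - Q_i \;\leq\; P_{i+1} - Q_{i+1}\qquad\text{for every pair of adjacent rows},
\]
where $A-B:=A\disjunion(\bN\setminus B)$ as multisets. Your appeal to ``Kreiman's analysis of the bounded row insertion \ldots\ for column-weak-monotonicity'' therefore does not touch the actual content of semistandardness. Establishing $P_i'-Q_i'\leq P_{i+1}'-Q_{i+1}'$ is precisely the technical heart of the lemma, and the paper does it by a careful three-case analysis depending on how the insertion step interacts with rows $i$ and $i+1$ (both rows change only via bumping; $a,d$ land in row $i$; or a bump in row $i$ causes $y_p,d$ to land in row $i+1$). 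Each case requires manipulating inequalities of the form $P_i\disjunion Q_{i+1}\leq P_{i+1}\disjunion Q_i$ together with the duality of the inserted elements; none of this is captured by your outline.

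Relatedly, your treatment of negativity is off target. Negativity of a semistandard notched bitableau is the condition $P_{r'}'-Q_{r'}'<\es$ on the bottom row, not the array condition $a_k<b_k$; the latter is the hypothesis, the former is what must be proved. The paper proves it by locating the largest entry of $P_{r'}'$ below $b_t$ and using duality to compare entries of $P_{r'}'$ and $Q_{r'}'$ position by position. By contrast, the duality property (ii) of a Skew-symmetric notched bitableau, which you treat as the ``main obstacle,'' is essentially automatic from the mirrored construction of the dual insertion---as the paper itself notes in a single sentence at the end of the proof. So your emphasis is inverted: the part you work hardest on is the easy part, and the part you dismiss with a citation is where the real argument lies.
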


\begin{lem}\label{l.obrsk-bijection-negative}
The map $OBRSK$ is a degree-preserving bijection from the set of all pairs of negative Skew-symmetric lexicographic arrays to the set of all negative Skew-symmetric notched bitableaux.
\end{lem}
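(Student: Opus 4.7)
Lemma~\ref{l.neg-skewlexarrays-to-neg-skew-bitab} already guarantees that $OBRSK$ lands in the correct codomain, so it remains to verify degree preservation and bijectivity. Degree preservation is immediate from the construction: passing from $(P^{(i)}, Q^{(i)})$ to $(P^{(i+1)}, Q^{(i+1)})$ adds exactly two boxes to each of $P^{(i)}$ and $Q^{(i)}$ --- one from the bounded insertion $P^{(i)}\la{b_{i+1}}a_{i+1}$ (resp.\ its dual $Q^{(i)}\la{dual}c_{t-i}$), and one from placing $d_{t-i}$ (resp.\ $b_{i+1}$) as the new rightmost (resp.\ leftmost) entry of row $K_{(i)}$. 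After $t$ iterations the output bitableau has $2t$ boxes on each side, matching the degree $2t$ of the input pair.

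For bijectivity I would construct an explicit inverse $OBRSK^{-1}$ by unwinding one step at a time. Given a negative skew-symmetric notched bitableau $(P,Q)$ of total size $2t$, the reverse step locates the row $K$ of the most recent insertion via the standard inverse-BRSK rule (the leftmost entry of row $K$ of $Q$ is the smallest among all leftmost-column entries, ties broken by largest row-index), reads off $b_t$ as this leftmost entry of row $K$ of $Q$ and $d_1$ as the rightmost entry of row $K$ of $P$, removes both outer-corner cells, and then performs Kreiman's reverse bounded bumping on $P$ starting from the rightmost remaining entry of row $K$ to recover $a_t$, together with the corresponding dual reverse bumping on $Q$ to recover $c_1$. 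This yields the final column of $\{\pi_1,\pi_2\}$ and reduces $(P,Q)$ to a bitableau of size $2(t-1)$, so an induction on $t$ produces the full pair of arrays.

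That $OBRSK^{-1}$ and $OBRSK$ are mutual inverses follows by induction: the placement/removal of the outer corners $(d_{t-i}, b_{i+1})$ is manifestly reversible, and the bounded-insertion and dual-insertion steps are inverted by reverse-BRSK as in \cite{Kr-bkrs}. To conclude it suffices to check that the arrays $\{\pi_1,\pi_2\}$ produced by $OBRSK^{-1}$ satisfy the six defining conditions of negative skew-symmetric lexicographic arrays. Conditions (i)--(ii) come from the row-strict structure of $P^{(i)}, Q^{(i)}$ (Lemma~\ref{l.P(i)_row_strict} and its $Q$-analogue together with the semistandard property of the bitableau); conditions (iii), (iv), and (vi) follow from the fact that the forward algorithm has $a_j<b_j$ forced by negativity, and dually $d_{t+1-j}<c_{t+1-j}$; condition (v) (duality) is forced by the skew-symmetry of $(P,Q)$.

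The main obstacle I expect is verifying condition (v), the duality property, for the reconstructed arrays. The forward algorithm produces two parallel bump-traces at each step --- one in $P$ triggered by $a_{i+1}$, one in $Q$ triggered dually by $c_{t-i}$ --- governed by the same sequence of row-depths $j_1, \ldots, j_{K_{(i)}}$. Showing that the reverse algorithm correctly pairs the duality map $D_{\pi_1,\pi_2}$ on the output arrays with the duality map $D_{(P,Q)}$ of the bitableau will require a careful bookkeeping argument tracking these parallel bump-traces under inversion; the essential input is the skew-symmetric property (ii) of Definition~\ref{defn.dual-in-bitableau}, which precisely says that dual entries of $(P,Q)$ arose from dual columns of $\{\pi_1,\pi_2\}$.
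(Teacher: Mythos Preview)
Your approach is essentially the same as the paper's: construct the inverse $ROBRSK$ one step at a time and then verify that the reconstructed arrays satisfy the skew-symmetric lexicographic axioms. One technical correction is needed, however. After removing $d_{t-i}$ from row $K$ of $P$, the reverse bounded bumping must \emph{not} begin at the rightmost remaining entry of that row, but at the greatest entry in that row strictly less than $b_{i+1}$. Row $K$ of $P^{(i+1)}$ can contain entries $\geq b_{i+1}$ (typically earlier $d_j$'s) sitting between the new box produced by the bounded insertion and the just-removed $d_{t-i}$; these entries were untouched by the insertion $P^{(i)}\la{b_{i+1}}a_{i+1}$, which by definition acts only on $P^{(i)<b_{i+1}}$. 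Starting reverse bumping from the literal rightmost remaining entry would therefore unbump the wrong element. In Example~\ref{ex.obrsk}, row~2 of $P^{(5)}$ is $G\,I\,S\,P = 3,7,12,20$; after removing $P=20$ the rightmost remaining entry is $S=12$, but $b_5=E=9$, so reverse bumping must begin at $I=7$. With this fix your argument matches the paper's.
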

\begin{Proof}
That $OBRSK$ is degree-preserving is obvious.
To show that $OBRSK$ is a bijection, we define its inverse, which we call the \textbf{reverse} of $OBRSK$, or $ROBRSK$. 

Note that the entire procedure used to form
$(P^{(i+1)},Q^{(i+1)})$ from $(P^{(i)},Q^{(i)})$, $a_{i+1}$, $b_{i+1}$, $c_{t-i}$ and $d_{t-i}$, 
$i=1,\ldots,t-1$, is reversible.  In other words, by knowing only
$(P^{(i+1)},Q^{(i+1)})$, we can retrieve $(P^{(i)},Q^{(i)})$,
$a_{i+1}$, $b_{i+1}$, $c_{t-i}$ and $d_{t-i}$. First, we obtain $b_{i+1}$; it is the
minimum entry of $Q^{(i+1)}$. Look at the lowest row in which $b_{i+1}$ appears in $Q^{(i+1)}$, say it is row number $s$ (counting from top to bottom). In the same row (row number $s$, counting from top to bottom) of $P^{(i+1)}$, look at the rightmost entry: this entry is precisely $d_{t-i}$. Remove this entry (which is $d_{t-i}$) from the $s$-th row of $P^{(i+1)}$, that will give us the notched tableau $P^{(i)}\la{b_{i+1}}{a_{i+1}}$. Similarly remove the leftmost entry (which is $b_{i+1}$) from the $s$-th row of $Q^{(i+1)}$ and all other entries in this row of $Q^{(i+1)}$ should be moved one box to the left: this will give us the notched tableau $Q^{(i)}\la{dual}{c_{t-i}}$. 

Then, in the $s$-th row of $P^{(i)}\la{b_{i+1}}{a_{i+1}}$, select the greatest entry which is less than $b_{i+1}$. This entry was the new box of the bounded insertion. If we begin reverse bounded insertion with this entry, we retrieve $P^{(i)}$ and $a_{i+1}$. Look at the \textbf{path} in $P^{(i)}\la{b_{i+1}}{a_{i+1}}$ starting from the $s$-th row to the topmost row, along which this reverse bounded insertion had happened. Trace the \textbf{`dual path'} in $Q^{(i)}\la{dual}{c_{t-i}}$ and do a \textbf{dual} of the reverse bounded insertion (which was done originally on $P^{(i)}\la{b_{i+1}}{a_{i+1}}$ to retrieve $P^{(i)}$ and $a_{i+1}$) on $Q^{(i)}\la{dual}{c_{t-i}}$: that will give us $Q^{(i)}$ and $c_{t-i}$ out of $Q^{(i)}\la{dual}{c_{t-i}}$. 

We call this process of obtaining
$(P^{(i)},Q^{(i)})$, $a_{i+1}$, $b_{i+1}$, $c_{t-i}$ and, $d_{t-i}$ from $(P^{(i+1)},Q^{(i+1)})$
described in the paragraphs above a \textbf{reverse step} and
denote it by $(P^{(i)},Q^{(i)})=(P^{(i+1)},Q^{(i+1)})\ra{b_{i+1},c_{t-i}}a_{i+1},d_{t-i}$. We will call the process of applying all the reverse steps sequentially
to retrieve $\{\pi_1,\pi_2\}$ from $(P^{(t)},Q^{(t)})$ the \textbf{reverse of} $OBRSK$, or $ROBRSK$. 

If $(P^{(t)},Q^{(t)})$ is an arbitrary negative skew-symmetric notched
bitableau (which we do not assume to be $OBRSK(\{\pi_1,\pi_2\})$, for some $\{\pi_1,\pi_2\}$),
then we can still apply a sequence of reverse steps to
$(P^{(t)},Q^{(t)})$, to sequentially obtain $(P^{(i)},Q^{(i)})$,
$a_{i+1}$, $b_{i+1}$, $c_{t-i}$ and, $d_{t-i}$, $i=t-1,\ldots,1$. For this process to be well-defined,
however, it must first be checked that the successive
$(P^{(i)},Q^{(i)})$ are negative skew-symmetric notched bitableaux.
For this, it suffices to prove a statement very similar to that proved in
Lemma \ref{l.neg-skewlexarrays-to-neg-skew-bitab}, namely: `If $(P,Q)$ is a negative
skew-symmetric notched bitableau, then $(P',Q'):=(P,Q)\ra{b,c}a,d$ is a
negative skew-symmetric notched bitableau, $a<b, d<c, a<d, b<c$ are positive
integers, $d$ is greater than or equal to all entries of $P$, and $b$ is less than or equal to all entries of $Q$'.
That $a<b, d<c, a<d, b<c$ are positive integers,  $d$ is greater than or equal to all entries of $P$, and $b$ is less than or equal to
all entries of $Q$ follow immediately from the definition of a
reverse step. That $(P',Q')$ is a negative skew-symmetric notched
bitableau follows in much the same manner as the proof of Lemma
\ref{l.neg-skewlexarrays-to-neg-skew-bitab}; we omit the details. 

It remains to show that the pair of arrays produced by applying this sequence of reverse steps to the
arbitrary skew-symmetric notched bitableau $(P^{(t)},Q^{(t)})$ is skew-symmetric lexicographic. The proof of this uses the duality property of skew-symmetric notched bitableaux, the facts mentioned in the preceding paragraph regarding the integers $a,b,c,d$, and the rest of the proof goes similarly as in the proof of lemma 6.3 of \cite{Kr-bkrs}. 

At each step, $OBRSK$ and the reverse of $ROBRSK$ are inverse to
eachother. Thus they are inverse maps.
\end{Proof}

The map $OBRSK$ can be extended to all pairs of nonvanishing skew-symmetric lexicographic arrays. If $\{\pi_1,\pi_2\}$ is a pair of positive skew-symmetric lexicographic arrays, then define $OBRSK(\{\pi_1,\pi_2\})$ to be $\gi(OBRSK(L(\{\pi_1,\pi_2\})))$. If $\{\pi_1,\pi_2\}$ is a pair of nonvanishing skew-symmetric lexicographic array, with negative and positive parts $\{\pi_1^-,\pi_2^-\}$ and $\{\pi_1^+,\pi_2^+\}$, then define $OBRSK(\{\pi_1,\pi_2\})$ to be the skew-symmetric notched bitableau whose negative and positive parts are $OBRSK(\{\pi_1^-,\pi_2^-\})$ and $OBRSK(\{\pi_1^+,\pi_2^+\})$ (see Figure~\ref{f.obrsk}). 
\setlength{\unitlength}{.50cm}
\begin{figure}[h!]
\begin{picture}(25,8)(0,0)
\put(1,3){$\{\pi_1,\pi_2\}$}
\put(3.5,3){\vector(1,1){2}}
\put(3.5,3){\vector(1,-1){2}}
\put(5.5,0.5){$\{\pi_1^+,\pi_2^+\}$}
\put(5.5,5.5){$\{\pi_1^-,\pi_2^-\}$}
\put(8.5,0.5){\vector(1,0){2}}
\put(8.5,5.5){\vector(1,0){2}}
\put(10.5,0.3){$OBRSK(\{\pi_1^+,\pi_2^+\})$}
\put(10.5,5.3){$OBRSK(\{\pi_1^-,\pi_2^-\})$}
\put(16.8,1){\vector(1,1){2}}
\put(16.8,5){\vector(1,-1){2}}
\put(19,2.8){$OBRSK(\{\pi_1,\pi_2\})$}
\end{picture}
\caption{The map $OBRSK$}
\label{f.obrsk}
\end{figure}
As a consequence of Lemma ~\ref{l.neg-skewlexarrays-to-neg-skew-bitab}, we obtain
\begin{prop}\label{p.obrsk_bijection}
The map $OBRSK$ is a degree-preserving bijection from the set of
all pairs of nonvanishing (resp. negative, positive) skew-symmetric lexicographic arrays to
the set of all nonvanishing (resp. negative, positive) skew-symmetric
notched bitableaux.
\end{prop}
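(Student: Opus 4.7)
The plan is to prove the three assertions (for negative, positive, and nonvanishing pairs respectively) in sequence, bootstrapping each from the previous. The negative case is precisely Lemma~\ref{l.obrsk-bijection-negative}, which I would simply invoke. Degree preservation in that case is built into the construction: each of the $t$ insertion steps appends two boxes to each of $P$ and $Q$, producing a bitableau of degree $2t$ that matches the degree of the input pair.

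For the positive case, I would apply the definition $OBRSK(\{\pi_1,\pi_2\}) := \gi(OBRSK(L(\{\pi_1,\pi_2\})))$ and exhibit $OBRSK$ on positive pairs as a composition of three degree-preserving bijections: the map $L$, which Section~\ref{s.ss_lex_arrays} establishes is a degree-preserving involution interchanging positive and negative skew-symmetric lexicographic array pairs; the map $OBRSK$ on the negative side, which is a degree-preserving bijection by the previous step; and the map $\gi$, which Section~\ref{s.ss_notched_bitab} establishes is a degree-preserving involution interchanging negative and positive skew-symmetric notched bitableaux. The composition of degree-preserving bijections is a degree-preserving bijection, so this gives the positive case.

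For the nonvanishing case, I would first verify that a pair of nonvanishing skew-symmetric lexicographic arrays $\{\pi_1,\pi_2\}$ decomposes uniquely into its negative and positive parts $\{\pi_1^-,\pi_2^-\}$ and $\{\pi_1^+,\pi_2^+\}$: one splits the columns of $\pi_1$ by the sign of $a_k-b_k$, and by condition~(vi) of Section~\ref{s.ss_lex_arrays} the columns of $\pi_2$ split consistently (by the sign of $d_{t+1-k}-c_{t+1-k}$). Conversely, any compatible pair of (negative part, positive part) recomposes uniquely into a nonvanishing pair by the lexicographic merge of columns. The analogous row-wise decomposition applies to nonvanishing skew-symmetric notched bitableaux. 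Since $OBRSK$ on a nonvanishing pair is \emph{defined} componentwise on these decompositions, and each component is a degree-preserving bijection by the two cases already handled, the nonvanishing statement follows, with degrees adding.

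The main (though fairly minor) obstacle I anticipate is verifying rigorously that the decomposition/recomposition of nonvanishing objects is genuinely bijective at the level of indexing sets--namely, that any negative part glued to any positive part yields a valid nonvanishing skew-symmetric lexicographic array pair (and similarly a valid nonvanishing skew-symmetric notched bitableau). This reduces to a direct check against the defining conditions, using condition~(vi) in the array case to ensure that the negative/positive splittings of $\pi_1$ and $\pi_2$ are compatible, and using the row-structure of skew-symmetric notched bitableaux to handle the bitableau case.
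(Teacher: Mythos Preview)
Your proposal is correct and follows essentially the same approach as the paper. The paper treats the proposition as an immediate consequence of the negative case together with the extension of $OBRSK$ to positive and nonvanishing pairs via $L$, $\gi$, and the negative/positive decomposition; your write-up simply spells out those steps in detail (and in fact more carefully than the paper, which cites only Lemma~\ref{l.neg-skewlexarrays-to-neg-skew-bitab} where Lemma~\ref{l.obrsk-bijection-negative} is really what is needed).
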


\section{Restricting the OBRSK Correspondence}\label{s.gobrsk_props}

Thus far, there has been no reference to $\ga$, $\gb$, or $\gc$ in
our definition or discussion of the $OBRSK$. In fact, each of
$\ga$, $\gb$, and $\gc$ is used to impose restrictions on the
domain and codomain of the $OBRSK$. It is the $OBRSK$,
with domain and codomain restricted according to $\ga$, $\gb$, and
$\gc$, which is used in Section \ref{s.grobner} to give
geometrical information about $Y\abc$.

In this section, we first show how $\gb$ restricts the domain and
codomain of the $OBRSK$.  We then show how two subsets $T$ and
$W$ of $\NN$, $T$ negative and $W$ positive satisfying condition \ref{e.g.Kreiman's condition (*)}, restrict the domain
and codomain of the $OBRSK$. In Section \ref{s.grobner}, these
two subsets will be replaced by $T_\ga$ and $W_\gc$, subsets of
$\NN$ determined by $\ga$ and $\gc$ respectively.

There is a natural degree-preserving bijection $\psi$ between the set of all pairs of arrays (the arrays in the pair can be so arranged that the first array in the pair is lexicographic and the transpose of the second array in the pair is lexicographic, and this can be done in a unique way) and the set of all pairs of multisets on $\NN$:
\begin{equation}\{\left(\begin{array}{ccc}
b_1 & \cdots & b_{t}\\
a_1 & \cdots & a_{t}\\ \end{array}\right)\ ,\ \left(\begin{array}{ccc}
c_1 & \cdots & c_{t}\\
d_1 & \cdots & d_{t}\\ \end{array}\right)\}\mapsto\ \{\{(a_1,b_1),\ldots,(a_t,b_t)\},\{(d_1,c_1),\ldots,(d_t,c_t)\}\}\end{equation}
We call the image of a pair of skew-symmetric lexicographic arrays under the map $\psi$ to be a \textbf{pair of skew-symmetric multisets on $\NN$}. Recall from \S 4 of \cite{Kr-bkrs} the notion of a non-vanishing (resp. negative, positive) multiset on $\NN$. We call a pair of skew-symmetric multisets on $\NN$ to be \textbf{non-vanishing (resp. negative, positive)} if both the multisets in the pair are non-vanishing (resp. negative, positive). Easy to see that the map $\psi$ restricts to a bijection between pairs of non-vanishing (resp. negative, positive) skew-symmetric lexicographic arrays and pairs of non-vanishing (resp. negative, positive) skew-symmetric multisets on $\NN$. We define the \textbf{degree} of a pair of skew-symmetric multisets on $\NN$ to be the degree of its pre-image under the map $\psi$. For our purposes, it is more convenient to work with pairs of skew-symmetric multisets on $\NN$ than with pairs of skew-symmetric lexicographic arrays.
\begin{cor}\label{c.p.obrsk_bijection}
The map $OBRSK$ induces a degree-preserving bijection from the set of
all pairs of nonvanishing (resp. negative, positive) skew-symmetric multisets on $\NN$ to
the set of all nonvanishing (resp. negative, positive) skew-symmetric
notched bitableaux.
\end{cor}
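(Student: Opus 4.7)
The plan is to simply compose the bijection $\psi$ of the preceding paragraph with the bijection $OBRSK$ of Proposition \ref{p.obrsk_bijection}. The work already done reduces the corollary to a routine transport-of-structure argument, so the proof proposal is short.

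First I would verify (or cite, since it has already been asserted in the paragraph preceding the statement) that $\psi$ restricts to a degree-preserving bijection between pairs of nonvanishing (resp. negative, positive) skew-symmetric lexicographic arrays and pairs of nonvanishing (resp. negative, positive) skew-symmetric multisets on $\NN$. Here the arrangement convention on a pair of multisets is that one writes each multiset as a two-row array by sorting its elements so that the first array's upper row is weakly decreasing (with the lower row breaking ties in weakly decreasing order) and so that the transpose of the second array has the analogous property; this makes $\psi$ and $\psi^{-1}$ well defined on the nose. The nonvanishing/negative/positive properties are defined purely in terms of the comparisons $a_i$ versus $b_i$ and $d_i$ versus $c_i$, so they are preserved under $\psi$, which is clearly degree-preserving.

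Next I would define the map claimed in the corollary as the composition $OBRSK \circ \psi^{-1}$, and its inverse as $\psi \circ ROBRSK$, where $ROBRSK$ is the inverse constructed in the proof of Lemma \ref{l.obrsk-bijection-negative} (extended to the positive and nonvanishing cases via the involution $\gi$ and the decomposition into positive and negative parts, just as in the definition of $OBRSK$ immediately preceding Figure \ref{f.obrsk}). By Proposition \ref{p.obrsk_bijection}, this composition is a degree-preserving bijection in each of the three cases (nonvanishing, negative, positive).

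There is no real obstacle: the only thing to be careful about is checking that the restrictions of $\psi$ to the three named classes (nonvanishing, negative, positive) actually land in the correspondingly-named classes of pairs of skew-symmetric multisets, and I would just unwind the definitions to see this. The corollary then follows immediately, and in the sequel we may and will identify pairs of skew-symmetric lexicographic arrays with pairs of skew-symmetric multisets on $\NN$ via $\psi$, as indicated in the last sentence of Section \ref{s.gobrsk_props}.
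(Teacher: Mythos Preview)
Your proposal is correct and matches the paper's approach exactly: the corollary is stated without proof as an immediate consequence of Proposition~\ref{p.obrsk_bijection} together with the degree-preserving bijection $\psi$ described in the preceding paragraph, and your write-up just makes this composition explicit.
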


\subsection*{Restricting by $\gb$}

Let $\gb\in\id$. We say that a \textbf{skew-symmetric notched bitableau} $\mathbf{(P,Q)}$ \textbf{is on} $\mathbf{\gB}$ if all entries of $P$ are in $\ol{\gb}$, all
entries of $Q$ are in $\gb$, and the sum of any entry in $P$ (or in $Q$) with its dual (with respect to $(P,Q)$) is $2d+1$. 

Given any monomial $U$ in $\orootsb$, we can define a monomial $U^{\hash}$ in $\drootsb$ as follows: $U^{\hash}:=\{(c^*,r^*)|(r,c)\in U\}$. We say that a pair $\{V_1,V_2\}$ of skew-symmetric multisets on $\NN$ is a \textbf{pair of skew-symmetric multisets on $\gB$} if $V_1$ is a monomial in $\orootsb$, $V_2$ is a monomial in $\drootsb$, number of elements (counting multiplicities) in $V_1$ and $V_2$ are the same, and $V_2=V_1^{\hash}$. In other words, a general pair of skew-symmetric multisets on $\gB$ will look like $\{V,V^{\hash}\}$ for some monomial $V$ in $\orootsb$. Given any monomial $U$ in $\orootsb$, there is naturally associated to it a pair of skew-symmetric multisets on $\BBZ$ given by $\{U,U^{\hash}\}$. 

It is clear (modulo the observations that any skew-symmetric notched bitableau has to be row-strict by its very definition, and that conditions (iii) and (iv) in the definition of a pair of skew-symmetric lexicographic arrays hold true for the inverse image under the map $\psi$ of any pair of skew-symmetric multisets on $\BBZ$) from the construction of
$OBRSK$ that if $\{U,U^{\hash}\}$ is a pair of nonvanishing skew-symmetric multisets on $\gB$, then
$OBRSK(\{U,U^{\hash}\})$ is a nonvanishing skew-symmetric notched bitableau on
$\gB$, and visa-versa. Thus, as a consequence of Corollary
\ref{c.p.obrsk_bijection}, we obtain

\begin{cor}\label{c.beta_obrsk_bijection}
The map $OBRSK$ restricts to a degree-preserving bijection from the
set of all pairs of nonvanishing (resp. negative, positive) skew-symmetric multisets on $\BBZ$
to the set of all nonvanishing (resp. negative, positive) skew-symmetric notched
bitableaux on $\BBZ$.
\end{cor}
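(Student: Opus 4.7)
The plan is to leverage Corollary \ref{c.p.obrsk_bijection}, which already gives a degree-preserving bijection $OBRSK$ between pairs of (nonvanishing, negative, positive) skew-symmetric multisets on $\NN$ and the corresponding bitableaux. Since the hash involution $(r,c)\mapsto(c^*,r^*)$ preserves the sign of a pair (if $r<c$ then $c^*<r^*$), the negative and positive parts of a pair on $\BBZ$ are themselves pairs on $\BBZ$, so the nonvanishing case reduces to the negative and positive cases. Moreover the positive case reduces to the negative case: $L$ interchanges the coordinates of each column (sending $\posb$-valued pairs to $\gb\times\ol{\gb}$-valued pairs), $OBRSK$ then produces a bitableau with $P$-entries in $\gb$ and $Q$-entries in $\ol{\gb}$, and finally $\iota$ swaps the two tableaux, yielding a bitableau with $P$-entries in $\ol{\gb}$ and $Q$-entries in $\gb$ (the extra row-reversal in $\iota$ does not affect which entries live where). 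So the proof concentrates on the negative case.

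Fix a pair $\{V,V^{\hash}\}$ of negative skew-symmetric multisets on $\BBZ$, with associated lexicographic arrays $\pi_1,\pi_2$ labelled as in \ref{eq.arrays}. Since $V\subseteq\orootsb$, we have $a_i\in\ol{\gb}$ and $b_i\in\gb$, and the hash relation $V^{\hash}=\{(b_i^*,a_i^*)\}$ together with the lexicographic arrangement of $\pi_2$ forces $d_{t+1-i}=b_i^*\in\ol{\gb}$ and $c_{t+1-i}=a_i^*\in\gb$. Tracing the $OBRSK$ construction, at each stage the entries newly placed in $P^{(i+1)}$ come from $\{a_{i+1},d_{t-i}\}\subseteq\ol{\gb}$ and those placed in $Q^{(i+1)}$ come from $\{b_{i+1},c_{t-i}\}\subseteq\gb$. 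Hence the first two conditions for $OBRSK(\{V,V^{\hash}\})$ to be on $\BBZ$ hold automatically.

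The core step is the third condition: for every entry $x$ of $P^{(i)}$, its dual in $(P^{(i)},Q^{(i)})$ equals $x^*$, and symmetrically for $Q^{(i)}$. I would prove this by induction on $i$. The base case $i=0$ is vacuous. For the inductive step, the key observation is that the bounded insertion of $a_{i+1}$ into $P^{(i)}$ and the dual insertion of $c_{t-i}=a_{i+1}^*$ into $Q^{(i)}$ are exact mirror-image processes: at each row $k$ visited, if the former bumps/places at forward position $j_k$, the latter bumps/places at backward position $j_k$, and these are dual positions in the row (which has even length by the inductive hypothesis). Since the value placed in $Q^{(i)}$ at the dual position is the star of the value placed in $P^{(i)}$ (this is maintained because bumped entries are themselves star-paired by induction), the duality sum $2d+1$ is preserved along the entire insertion path. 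The final appendage of $d_{t-i}=b_{i+1}^*$ at the right end of row $K_{(i)}$ of the modified $P$ and of $b_{i+1}$ at the left end of row $K_{(i)}$ of the modified $Q$ places star-conjugate values at mutually dual positions of the newly enlarged row, completing the inductive step.

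The main obstacle is formalizing the mirror-image claim: one must track carefully that the forward-$j_k$ insertion path in $P$ corresponds, step for step, to the backward-$j_k$ insertion path in $Q$, even as entries are shifted sideways when $b_{i+1}$ is inserted at the left of a row. Once this bookkeeping is done, the reverse direction follows from an entirely parallel induction on the reverse steps of $ROBRSK$: starting from a skew-symmetric notched bitableau on $\BBZ$, the observation that the minimum entry of $Q^{(i+1)}$ and the rightmost entry of the chosen row of $P^{(i+1)}$ are star-conjugates (by the duality property of being on $\BBZ$) shows that each reverse step extracts integers $a,b,c,d$ with $c=a^*$ and $d=b^*$; assembling the extracted columns and applying $\psi$ produces a pair $\{V,V^{\hash}\}$ with $V\subseteq\orootsb$, closing the bijection.
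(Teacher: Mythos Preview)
Your proposal is correct and follows the same approach as the paper: both deduce the corollary from Corollary~\ref{c.p.obrsk_bijection} by checking that $OBRSK$ and its inverse carry the ``on $\BBZ$'' condition back and forth. The paper is far terser---it simply asserts that this is ``clear from the construction'' (after noting that conditions (iii) and (iv) of the skew-symmetric lexicographic array definition hold automatically for pairs on $\BBZ$), whereas you spell out the reductions (nonvanishing to signed, positive to negative via $L$ and $\iota$) and give an explicit inductive verification that the insertion path in $P$ and the dual insertion path in $Q$ place star-conjugate entries at dual positions, so that the sum-equals-$2d{+}1$ condition is preserved step by step.
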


\subsection*{Restricting by $T$ and $W$}
A \textbf{dual pair of chains} in $\NN$ is a pair of subsets $\{C_1=\{(e_1,f_1),\ldots,(e_m,f_m)\},C_2=\{(g_1,h_1),\ldots,(g_m,h_m)\}\}$ such that $C_1$ is a chain in $\NN$ in the sense of section 7 of ~\cite{Kr-bkrs}, and $\{\sigma_1,\sigma_2\}$ is a pair of skew-symmetric lexicographic arrays where 
\begin{equation}\sigma_1= \left(\begin{array}{ccc}
f_1 & \cdots & f_m\\
e_1 & \cdots & e_m\\ \end{array}\right)\ and\ \sigma_2= \left(\begin{array}{ccc}
h_1 & \cdots & h_m\\
g_1 & \cdots & g_m\\ \end{array}\right)\end{equation}.

\bdefn\label{defn.dual-column}
Let $\{U_1,U_2\}$ be a pair of skew-symmetric multisets on $\NN$. Let $\{C_1,C_2\}$ be a dual pair of chains in $\NN$ such that $C_i$ is contained in the underlying set of $U_i$ for all $i=1,2$. Let $\{\pi_{C_1},\pi_{C_2}\}:={\psi}^{-1}(\{C_1,C_2\})$ and $\{\pi_{U_1},\pi_{U_2}\}:={\psi}^{-1}(\{U_1,U_2\})$. Given any column in $\pi_{C_1}$ (say, the $i$-th column counting from left to right), look at the column in $\pi_{U_1}$ having the least possible column number (counting from left to right) which is entrywise the same as the $i$-th column of $\pi_{C_1}$. Call this column of $\pi_{U_1}$ as the $i_{min}$-th column. Let $t$ be the total number of columns in $\pi_{U_1}$. We call the $(t+1-i_{min})$-th column of $\pi_{U_2}$ (counting from left to right) as the \textbf{dual column in} $\pi_{U_2}$ \textbf{corresponding to the} $i$-th \textbf{column of} $\pi_{C_1}$.  
\edefn

\bdefn\label{defn.columns-dual-toeachother}
Given any pair $\{\pi_1,\pi_2\}$ of skew-symmetric lexicographic arrays where 
\begin{equation}\pi_1= \left(\begin{array}{ccc}
b_1 & \cdots & b_t\\
a_1 & \cdots & a_t\\ \end{array}\right)\ and\ \pi_2= \left(\begin{array}{ccc}
c_1 & \cdots & c_t\\
d_1 & \cdots & d_t\\ \end{array}\right)\end{equation},
we say that the column $\left(\begin{array}{c}
b_i\\
a_i\\ \end{array}\right)$ of $\pi_1$ and the column $\left(\begin{array}{c}
c_{t+1-i}\\
d_{t+1-i}\\ \end{array}\right)$ of $\pi_2$ are \textbf{dual to each other w.r.t} $\{\pi_1,\pi_2\}$. Similarly, we say that the column $\left(\begin{array}{c}
b_{t+1-i}\\
a_{t+1-i}\\ \end{array}\right)$ of $\pi_1$ and the column $\left(\begin{array}{c}
c_{i}\\
d_{i}\\ \end{array}\right)$ of $\pi_2$ are \textbf{dual to each other w.r.t} $\{\pi_1,\pi_2\}$.
\edefn

\bdefn\label{defn.dual_pair_of_chains_in_pairs_of_multisets}
Given a pair of $\{U_1,U_2\}$ of skew-symmetric multisets on $\NN$, we say that a dual pair $\{C_1,C_2\}$ of chains in $\NN$ is a \textbf{dual pair of chains in} $\{U_1,U_2\}$, if the following two conditions are satisfied simultaneously:---
\noindent (i) $C_i$ is contained in the underlying set of $U_i$ for all $i=1,2$.
\noindent (ii) If $\{\pi_{C_1},\pi_{C_2}\}:={\psi}^{-1}(\{C_1,C_2\})$ and $\{\pi_{U_1},\pi_{U_2}\}:={\psi}^{-1}(\{U_1,U_2\})$, then given any column of $\pi_{C_1}$ (say, the $i$-th column), the dual column in $\pi_{U_2}$ corresponding to it is entrywise the same as the dual column of the $i$-th column of $\pi_{C_1}$ w.r.t $\{\pi_{C_1},\pi_{C_2}\}$.
\edefn

\bdefn\label{defn.bounded_by_T,W}
Let $T$ and $W$ be negative and positive subsets of $\NN$ respectively satisfying the condition that:---
\begin{equation}\label{e.g.Kreiman's condition (*)}
T_{(1)}, T_{(2)}, W_{(1)}, and\ W_{(2)}\ are\ subsets\ of\ \bN. 
\end{equation}
A nonempty pair of skew-symmetric multisets $\{U_1,U_2\}$ on $\NN$ is said to be \textbf{bounded by} $\mathbf{T, W}$ if for every dual pair $\{C_1,C_2\}$ of chains in $\{U_1,U_2\}$, we have:---
\begin{equation}\label{e.g.ortho_admissible_def}
T\leq {(P_{C_1^-,C_2^-},Q_{C_1^-,C_2^-})}^{up}\ and\ {(P_{C_1^+,C_2^+},Q_{C_1^+,C_2^+})}^{down}\leq W
\end{equation}
(where we use the order on multisets on $\bN^2$ defined in Section 4 of \cite{Kr-bkrs}), and $(P_{C_1^-,C_2^-},Q_{C_1^-,C_2^-})$ (resp. $(P_{C_1^+,C_2^+},Q_{C_1^+,C_2^+})$) is defined to be $OBRSK(\psi^{-1}(\{C_1^-,C_2^-\}))$ (resp. $OBRSK(\psi^{-1}(\{C_1^+,C_2^+\}))$). 

It is worthwhile to note that $(P_{C_1^-,C_2^-},Q_{C_1^-,C_2^-})^{up}$, $(P_{C_1^+,C_2^+},Q_{C_1^+,C_2^+})^{down}$, $T$ and $W$ are subsets of $\NN$, they are NOT pairs of subsets! 

With this definition, the $OBRSK$ correspondence is a bounded
function, in the sense that it maps bounded sets to bounded sets.
More precisely, we have the following Lemma (To understand the statement of this lemma, we need to recall the notion of a semistandard notched bitableau being bounded by $\mathbf{T,W}$ from \S 5 of \cite{Kr-bkrs}), whose proof appears
in Section \ref{s.Proofs}.
\edefn

\begin{lem}\label{l.g.dominance}
If a pair $\{U_1,U_2\}$ of nonvanishing skew-symmetric multisets on $\NN$ is bounded by $T,W$, then
$OBRSK(\{U_1,U_2\})$ is bounded by $T,W$. [Note that here, by $OBRSK(\{U_1,U_2\})$, we mean $OBRSK({\psi}^{-1}(\{U_1,U_2\}))$.]
\end{lem}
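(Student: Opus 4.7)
My plan is to reduce the statement to the negative case and then deduce it from the analogous boundedness result for Kreiman's bounded RSK (\cite{Kr-bkrs}), using the fact that $OBRSK$ is, in a precise sense, two copies of Kreiman's bounded insertion glued together by the duality structure.

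First, I would split $\{U_1,U_2\}$ into its negative and positive parts $\{U_1^-,U_2^-\}$ and $\{U_1^+,U_2^+\}$. Since $OBRSK$ on a nonvanishing pair is defined componentwise on these parts by means of the involutions $L$ and $\gi$ (Proposition~\ref{p.obrsk_bijection} and Figure~\ref{f.obrsk}), and since these involutions interchange negative $\leftrightarrow$ positive as well as up $\leftrightarrow$ down while fixing $T$ and $W$, the lemma reduces to the case where $\{U_1,U_2\}$ is negative. So assume this and write $(P,Q):=OBRSK(\{U_1,U_2\})$.

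Next, I would establish a chain-correspondence: every chain in the notched bitableau $(P,Q)$ (in the sense used in \cite{Kr-bkrs}~\S 5 to define boundedness for notched bitableaux) arises from a dual pair of chains in $\{U_1,U_2\}$. Inspection of the algorithm in Section~\ref{s.obrsk} shows that $P$ is built up by successive bounded insertions $P^{(i)}\la{b_{i+1}}a_{i+1}$ driven purely by $\pi_{U_1}$, just as in Kreiman's bounded RSK; $Q$ is built by the mirror sequence of dual insertions driven by $\pi_{U_2}$, with each insertion path in $Q$ being the left-right reflection, at the appropriate row, of the corresponding path in $P$. Hence Kreiman's chain lemma applied to $P$ pulls a chain in $P$ back to a chain $C_1$ contained in $U_1$ whose bounded-RSK image reproduces that chain-tableau; similarly any chain in $Q$ pulls back to $C_2\subseteq U_2$. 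The duality property of a skew-symmetric notched bitableau together with the mirroring between insertion and dual-insertion forces this pair $\{C_1,C_2\}$ to be a dual pair of chains in $\{U_1,U_2\}$ in the sense of Definition~\ref{defn.dual_pair_of_chains_in_pairs_of_multisets}.

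Finally, I would invoke the hypothesis that $\{U_1,U_2\}$ is bounded by $T,W$: applied to the dual pair $\{C_1,C_2\}$ just produced, it yields the two inequalities
\[
T\leq (P_{C_1^-,C_2^-},Q_{C_1^-,C_2^-})^{up}
\qquad\text{and}\qquad
(P_{C_1^+,C_2^+},Q_{C_1^+,C_2^+})^{down}\leq W.
\]
By the chain-correspondence, the right-hand sides are exactly the negative and positive parts of the sub-bitableau of $(P,Q)$ cut out by the chosen chain, so the required boundedness inequalities on $(P,Q)$ in the sense of \cite{Kr-bkrs}~\S 5 hold. Since the chain was arbitrary, $(P,Q)$ is bounded by $T,W$. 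The main obstacle is the chain-correspondence step, and in particular verifying that duality of a chain in the skew-symmetric bitableau $(P,Q)$ corresponds exactly to $\{C_1,C_2\}$ being a dual pair of chains in $\{U_1,U_2\}$; this requires careful bookkeeping of insertion paths in $P$ against their reflected dual paths in $Q$, using condition (v) of the definition of a pair of skew-symmetric lexicographic arrays together with Lemma~\ref{l.P(i)_row_strict}. Once this alignment is secured, the rest is a direct appeal to the bounded RSK machinery of \cite{Kr-bkrs} applied to $P$ and $Q$ individually.
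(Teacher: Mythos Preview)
Your reduction to the negative case via $L$ and $\gi$ is correct and matches the paper, but the core of your argument rests on two claims that do not hold.

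First, the assertion that ``$P$ is built up by successive bounded insertions $P^{(i)}\la{b_{i+1}}a_{i+1}$ driven purely by $\pi_{U_1}$, just as in Kreiman's bounded RSK'' is false. In the OBRSK algorithm (Section~\ref{s.obrsk}), after each bounded insertion $P^{(i)}\la{b_{i+1}}a_{i+1}$ one also places $d_{t-i}$, which comes from $\pi_{U_2}$, at the rightmost end of the $K_{(i)}$-th row to form $P^{(i+1)}$. Likewise $Q^{(i+1)}$ receives $b_{i+1}$ from $\pi_{U_1}$ in addition to $c_{t-i}$ from $\pi_{U_2}$. So neither $P$ nor $Q$ is the output of Kreiman's bounded RSK on a single array, and you cannot invoke his boundedness lemma on $P$ and $Q$ separately. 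The presence of the $d$-entries in the top row of $P$ is exactly what makes the inequality $T_{(1)}-T_{(2)}\leq P_1-Q_1$ delicate in the ranges $a\leq z<p_l$ and $q_{2\hat c+1-l}\leq z<c$ (and $d\leq z<c$ in the non-bumping case), and this is where the paper has to do real work.

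Second, there is no notion of ``chains in the notched bitableau $(P,Q)$'' used to define boundedness in \cite{Kr-bkrs}~\S5. For a semistandard notched bitableau, being bounded by $T,W$ is a condition solely on the top row of the negative part and the bottom row of the positive part (namely $T_{(1)}-T_{(2)}\leq P_1^--Q_1^-$ and $P_r^+-Q_r^+\leq W_{(1)}-W_{(2)}$). So your proposed ``chain-correspondence'' step, as stated, is aimed at the wrong target.

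The paper's proof proceeds instead by a direct induction on the number $k$ of columns inserted. For each $j\leq m(k)$ it constructs an explicit dual pair of chains $\{C_{k,j}^{(1)},C_{k,j}^{(2)}\}$ in $\{U_1^{(k)},U_2^{(k)}\}$ with at most $j$ elements whose OBRSK top row realises $p_j^{(k)}$ in position at least $j$ (part~(i)). It then verifies $|(T_{(1)}-T_{(2)})^{\leq z}|\geq |(P_1'-Q_1')^{\leq z}|$ for every $z$ (part~(ii)), splitting into the bump and non-bump cases; in the problematic $z$-ranges it applies the boundedness hypothesis on $\{U_1,U_2\}$ to the freshly built chain $\{C_{k+1,l}^{(1)},C_{k+1,l}^{(2)}\}$ and counts explicitly. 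The chains are thus tools for bounding the top row of $(P,Q)$, not features of $(P,Q)$ itself.
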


\bdefn\label{defn.dual_pair_of_chains_in_BBZ}
A dual pair $\{C_1,C_2\}$ of chains in $\NN$ is called a \textbf{dual pair of chains in $\BBZ$} if $\{C_1,C_2\}$ is a pair of skew-symmetric multisets on $\BBZ$. Clearly then, a general dual pair of chains in $\BBZ$ will look like $\{C,C^{\hash}\}$ for some extended $\gb$-chain $C$ in $\orootsb$.
\edefn

\bdefn\label{defn.subsets-for-bitableau}
Given any row-strict notched bitableau $(P,Q)$, we associate to it $2$ subsets of $\NN$ as follows:---
Let $P_1$ and $Q_1$ denote the topmost row of $P$ and $Q$ respectively. Let $p_{11}<\ldots<p_{1k_1}$ and $q_{11}<\ldots<q_{1k_1}$ denote the entries of $P_1$ and $Q_1$ respectively. We denote by $(P,Q)^{up}$ the subset of $\NN$ given by $\{(p_{11},q_{11}),\ldots,(p_{1k_1},q_{1k_1})\}$. We denote by $(P,Q)^{down}$ the subset of $\NN$ obtained similarly if we work with the lower-most rows of $P$ and $Q$, instead of the topmost rows. We call $(p_{1j},q_{1j})$ the \textbf{j-th element of} $(P,Q)^{up}$ and, we denote by $(P,Q)^{up}_{\leq j}$ the subset $\{(p_{11},q_{11}),\ldots,(p_{1j},q_{1j})\}$ of $(P,Q)^{up}$.
\edefn 

\begin{rem}\label{r.a_particular_case_of_dual_pair_of_chains}
Note that if $\{U_1,U_2\}$ is a pair of skew-symmetric multisets on $\BBZ$, i.e., if $U_2=U_1^{\hash}$, then any dual pair of chains $\{C_1,C_2\}$ in $\{U_1,U_2\}$ must be a dual pair of chains in $\BBZ$, in other words, we must have $C_2=C_1^{\hash}$ where $C_1$ is an extended $\gb$-chain in $\orootsb$.
\end{rem}

\begin{rem}\label{r.a_particular_case_of_bounded_by_TW}
Let $T$ and $W$ be negative and positive subsets of $\NN$ respectively satisfying \ref{e.g.Kreiman's condition (*)}. A nonempty pair of skew-symmetric multisets $\{U,U^{\hash}\}$ on $\BBZ$ is said to be \textbf{bounded by} $\mathbf{T, W}$ if for every dual pair of chains $\{C,C^{\hash}\}$ in $\BBZ$ which is contained in the underlying set of $\{U,U^{\hash}\}$,
\begin{equation}\label{e.g.ortho_admissible_def}
T\leq (P_{C^-,{C^-}^{\hash}},Q_{C^-,{C^-}^{\hash}})^{up}\ and\ (P_{C^+,{C^+}^{\hash}},Q_{C^+,{C^+}^{\hash}})^{down}\leq W
\end{equation}
(where we use the order on multisets on $\bN^2$ defined in Section 4 of \cite{Kr-bkrs}), and $(P_{C^-,{C^-}^{\hash}},Q_{C^-,{C^-}^{\hash}})$ (resp. $(P_{C^+,{C^+}^{\hash}},Q_{C^+,{C^+}^{\hash}})$) is defined to be $OBRSK(\psi^{-1}(\{C^-,{C^-}^{\hash}\}))$ (resp. $OBRSK(\psi^{-1}(\{C^+,{C^+}^{\hash}\}))$). 

It is worthwhile to note that $(P_{C^-,{C^-}^{\hash}},Q_{C^-,{C^-}^{\hash}})^{up}$, $(P_{C^+,{C^+}^{\hash}},Q_{C^+,{C^+}^{\hash}})^{down}$, $T$ and $W$ are subsets of $\NN$, they are NOT pairs of subsets! 
\end{rem}

Let $T$ and $W$ be negative and positive subsets of $\gB$,
respectively satisfying \ref{e.g.Kreiman's condition (*)}. Combining Corollary \ref{c.beta_obrsk_bijection} and
Lemma \ref{l.g.dominance}, we obtain
\begin{cor}\label{c.g.beta_dominance_2}
For any positive integer $m$, the number of pairs of 
nonvanishing skew-symmetric multisets on $\BBZ$ bounded by $T, W$ of degree $2m$ is less than or
equal to the number of nonvanishing skew-symmetric notched bitableaux on $\BBZ$ bounded by $T, W$ of degree $2m$.
\end{cor}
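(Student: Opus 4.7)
The plan is to combine the bijection of Corollary~\ref{c.beta_obrsk_bijection} with the boundedness-preservation of Lemma~\ref{l.g.dominance} in a completely formal way; the corollary is essentially a bookkeeping consequence of those two results, and there should be no substantive obstacle.

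First I would restrict the $OBRSK$ map supplied by Corollary~\ref{c.beta_obrsk_bijection} to the subset of pairs of nonvanishing skew-symmetric multisets on $\BBZ$ that are bounded by $T,W$. Since that corollary gives a degree-preserving bijection from all pairs of nonvanishing skew-symmetric multisets on $\BBZ$ onto all nonvanishing skew-symmetric notched bitableaux on $\BBZ$, its restriction to any subset is still an injection and still preserves degree.

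Next I would invoke Lemma~\ref{l.g.dominance}, together with the compatibility recorded in Remark~\ref{r.a_particular_case_of_bounded_by_TW} (which explains that $T,W$-boundedness for pairs of skew-symmetric multisets on $\BBZ$ is merely the specialization of the general $\NN$-version to pairs of the form $\{U,U^\hash\}$), to conclude that the image under $OBRSK$ of a $T,W$-bounded pair on $\BBZ$ is again bounded by $T,W$. Here one should also verify that the image lies on $\BBZ$, but this is already part of Corollary~\ref{c.beta_obrsk_bijection}. Consequently the restricted $OBRSK$ is a degree-preserving injection from pairs of nonvanishing skew-symmetric multisets on $\BBZ$ bounded by $T,W$ into nonvanishing skew-symmetric notched bitableaux on $\BBZ$ bounded by $T,W$.

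Finally, fixing the degree to be $2m$ and applying the pigeonhole principle to this injection between the two finite sets of objects of that degree yields the asserted cardinality inequality. The only delicate point in the whole argument is Lemma~\ref{l.g.dominance}, whose proof is deferred to Section~\ref{s.Proofs}; assuming it, the corollary itself requires no further work.
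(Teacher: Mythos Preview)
Your proposal is correct and matches the paper's own argument, which simply states that the corollary follows by combining Corollary~\ref{c.beta_obrsk_bijection} and Lemma~\ref{l.g.dominance}. You have spelled out the details (restricting the bijection, invoking Remark~\ref{r.a_particular_case_of_bounded_by_TW}, and counting in fixed degree) that the paper leaves implicit, but the approach is the same.
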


\section{The initial ideal}\label{s.grobner}
Let $P=\field[X_{(r,c)}\st (r,c)\in\orootsb]$. Recall the concept of a \textit{Pfaffian} (denoted by $f_{\theta,\gb}$ for $\theta\in\id$) from \S~\ref{ss.ideal-of-tgtcone} of this paper. We call $f=f_{\gt_1,\gb}\cdots f_{\gt_r,\gb}\in P$ a
\textbf{standard monomial} if $\gt_1,\ldots,\gt_r\in\id$, 
\begin{equation}\label{e.r.stand_mon_1}
\gt_1\leqT \cdots\leqT \gt_r
\end{equation}
and for each $i\in\{1,\ldots,r\}$, either
\begin{equation}\label{e.r.stand_mon_2}
\gt_i\ltT \gb\ \ \hbox{ or }\ \ \gt_i\gtT \gb.
\end{equation}
If in addition, for $\ga,\gc\in\id$,
\begin{equation}\label{e.r.stand_mon_on_ab} \ga\leq \gt_1\quad\text{and} \quad
\gt_r\leq\gc,
\end{equation}
then we say that $f$ is \textbf{standard on} $Y\abc$. We define the \textbf{degree} of the standard monomial $f_{\gt_1,\gb}\cdots f_{\gt_r,\gb}$ to be the sum of the $\gb$-degrees of $\gt_1,\ldots,\gt_r$ where for any $\gt\in\id$, the $\gb$-degree is defined to be one-half the cardinality of $\gt\setminus\gb$. 

We remark that, in general, a standard monomial is not a monomial
in the affine coordinates $X_{(r,c)},(r,c)\in\orootsb$; rather,
it is a polynomial. It is only a monomial in the $f_{\gt,\gb}$'s. The following result follows in exactly the same way as in the proof of Proposition~3.2.1 of ~\cite{sthesis}:---
\begin{thm}\label{t.r.stand_mons_form_basis}
The standard monomials on $Y_{\ga,\gb}^\gc$ form a basis for
$\field[Y_{\ga,\gb}^\gc]$.
\end{thm}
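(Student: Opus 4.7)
The plan is to follow the same strategy as the proof of Proposition~3.2.1 of \cite{sthesis}, which established the analogous statement for Schubert varieties $Y^\gc(\gb) := X^\gc\cap\affinev^\gb$, and to upgrade it by incorporating the extra lower-bound condition $\ga\leq \gt_1$ that comes from intersecting with the opposite Schubert variety $X_\ga$. Concretely, I would split the proof into a spanning step and a linear-independence step.

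For spanning, I would first invoke the Schubert-variety result from \cite{sthesis}: standard monomials on $Y^\gc(\gb)$ (i.e.\ products $f_{\gt_1,\gb}\cdots f_{\gt_r,\gb}$ with $\gt_1\leqT\cdots\leqT \gt_r\leq\gc$ and each $\gt_i$ satisfying \eqref{e.r.stand_mon_2}) form a $\field$-basis for $\field[Y^\gc(\gb)]$. Since $Y_\ga^\gc(\gb) = Y^\gc(\gb)\cap X_\ga$ is a closed subscheme, there is a surjection $\field[Y^\gc(\gb)]\twoheadrightarrow \field[Y_\ga^\gc(\gb)]$, so these standard monomials already span $\field[Y_\ga^\gc(\gb)]$. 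It then suffices to show that every such monomial violating $\ga\leq \gt_1$ maps to zero in $\field[Y_\ga^\gc(\gb)]$. This is the pfaffian incarnation of the classical fact that, inside the ambient Schubert variety, the opposite Schubert variety $X_\ga$ is cut out by the vanishing of Plücker coordinates $p_\gt$ with $\gt\not\geq\ga$; under the identification of $p_\gt|_{\affinev^\gb}$ with $\pm f_{\gt,\gb}$ coming from the matrix \eqref{eq.matrix}, this translates to $f_{\gt_1,\gb}\in I(Y_\ga^\gc(\gb))$ whenever $\gt_1\not\geq\ga$. Thus every spanning element that is not already standard on $Y_\ga^\gc(\gb)$ is killed in the quotient, and the standard monomials on $Y_\ga^\gc(\gb)$ span.

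For linear independence, I would use a Hilbert-function argument. Standard monomial theory for Richardson varieties in $G/P$ (as developed in \cite{Br,Kr-La3,La-Go,Ri}) gives a formula for $\dim_{\field}\field[X_\ga^\gc]_n$ in each degree $n$, and this agrees with the count of standard monomials on $Y_\ga^\gc(\gb)$ of $\gb$-degree $n$ (with $\gb$-degree defined as in the statement). Localizing to the affine patch $\affinev^\gb$ preserves the grading, so the dimensions of the graded pieces of $\field[Y_\ga^\gc(\gb)]$ match the number of standard monomials of each degree. Combined with the spanning statement above, this forces the standard monomials on $Y_\ga^\gc(\gb)$ to be linearly independent, completing the proof.

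The main obstacle is the spanning step, specifically the verification that the opposite-Schubert ideal inside the affine patch $\affinev^\gb$ admits the pfaffian description one needs, i.e.\ that $f_{\gt,\gb}$ vanishes on $Y_\ga$ precisely when $\gt\not\geq \ga$. This boils down to carefully matching the classical Plücker-coordinate description of $X_\ga$ (as the closure of the $B^-$-orbit through $e_\ga$, cut out by the vanishing of $p_\gt$ for $\gt\not\geq\ga$) with the local coordinates on $\affinev^\gb$ given in \eqref{eq.matrix}. Once this identification is in hand, every remaining ingredient is parallel to the Schubert-variety argument of Proposition~3.2.1 of \cite{sthesis}, and the proof concludes exactly as there.
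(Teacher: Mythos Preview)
Your proposal is correct and aligns with the paper's own treatment: the paper does not give an independent proof but simply states that the result ``follows in exactly the same way as in the proof of Proposition~3.2.1 of \cite{sthesis}'', and your outline is precisely a fleshing-out of that strategy adapted to the Richardson setting. The spanning argument via the Schubert-variety basis together with the vanishing of $f_{\gt,\gb}$ for $\gt\not\geq\ga$ (which is exactly the content of \eqref{eq.ideal}), and the linear-independence step via a dimension count from projective SMT for Richardson varieties, are the expected ingredients.
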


We wish to give a different indexing set for the standard monomials on $Y\abc$. 
Let $I_{\gb}(Skew-symm)$ denote the set of all pairs $(\pr,\qr)$ such that all of the following conditions are satisfied:---
\begin{itemize}
\item $\pr\subset\ol{\gb}$.
\item $\qr\subset\gb$.
\item $|\pr|=|\qr|$ and this cardinality is \textit{even}.
\item If $\pr=\{r_1<\cdots<r_{2l}\}$ and $\qr=\{s_1<\cdots<s_{2l}\}$, then $r_i+s_{2l+1-i}=2d+1\ \forall\ i\in\{1,\ldots,2l\}$.
\end{itemize}

Defining $\pr-\qr:=\pr\disjunion(\gb\setminus
\qr)$ (see Section 4 of \cite{Kr-bkrs}), we have the following fact,
which is easily verified:
\begin{equation*}
\hbox{The map }(\pr,\qr)\mapsto \pr-\qr\hbox{ is a bijection from
}I_{\gb}(Skew-symm)\hbox{ to }\id,
\end{equation*}
(Indeed, the inverse map is given by $\gt\mapsto
(\gt\setminus\gb,\gb\setminus\gt)$).

Note that under this bijection, $(\es,\es)$ maps to $\gb$. Let
$(\pr_\ga,\qr_\ga)$ and $(\pr_\gc,\qr_\gc)$ be the preimages of
the elements $\ga$ and $\gc$ (of $\id$) respectively.  Define $T_\ga$ and $W_\gc$ to be
any subsets of $\BBZ$ such that $(T_\ga)_{(1)}=\pr_\ga$,
$(T_\ga)_{(2)}=\qr_\ga$, $(W_\gc)_{(1)}=\pr_\gc$,
$(W_\gc)_{(2)}=\qr_\gc$. Observe that $T_\ga$ and $W_\gc$ satisfy \ref{e.g.Kreiman's condition (*)}.

Under this identification of $I_{\gb}(Skew-symm)$ with $\id$, the inequalities which define non-vanishing skew-symmetric notched bitableaux on $\BBZ$ bounded by $T_\ga$, $W_\gc$ (these are inequalities (3),(4),(5) of \cite{Kr-bkrs}) are precisely the inequalities which define the standard monomials on $Y_{\ga,\gb}^\gc$ (the inequalities (\ref{e.r.stand_mon_1}), (\ref{e.r.stand_mon_2}), (\ref{e.r.stand_mon_on_ab}) of this paper). Thus we obtain
\begin{lem}\label{l.r.skewsymm_index}
The degree $2m$ nonvanishing skew-symmetric notched bitableaux on
$\BBZ$ bounded by $T_\ga, W_\gc$ form an indexing set for the
degree $m$ standard monomials on $Y_{\ga,\gb}^\gc$.
\end{lem}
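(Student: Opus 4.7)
The plan is to construct an explicit bijection row-by-row, using the bijection $(\pr,\qr)\mapsto \pr-\qr$ from $I_\gb(\text{Skew-symm})$ to $\id$ recorded just before the lemma. Given a nonvanishing skew-symmetric notched bitableau $(P,Q)$ on $\BBZ$ of degree $2m$, its $i$-th row is a pair $(P_i,Q_i)$ of strictly increasing sequences of common (even) length $2l_i$ with $P_i\subset\ol{\gb}$ and $Q_i\subset\gb$. The identity $p_{i,j}+q_{i,2l_i+1-j}=2d+1$ forced by the skew-symmetry condition on $(P,Q)$ restricted to $\BBZ$ is precisely the fourth bullet in the definition of $I_\gb(\text{Skew-symm})$, so each row lies in $I_\gb(\text{Skew-symm})$ and corresponds to some $\gt_i\in\id$ of $\gb$-degree $l_i$. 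Summing over rows yields a candidate standard monomial $f_{\gt_1,\gb}\cdots f_{\gt_r,\gb}$ of degree $\sum_i l_i=m$, matching the degree $2m$ of the bitableau.

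Next I would check that the remaining combinatorial conditions translate correctly under this row-by-row assignment. The nonvanishing condition on $(P,Q)$ (each row strictly negative or strictly positive) becomes $\gt_i\ltT\gb$ or $\gt_i\gtT\gb$, giving (\ref{e.r.stand_mon_2}); the semistandardness of $(P,Q)$ becomes $\gt_1\leqT\cdots\leqT\gt_r$, giving (\ref{e.r.stand_mon_1}). These two equivalences are the content of inequalities (3)--(5) of \cite{Kr-bkrs} (transferred via the bijection $I_\gb(\text{Skew-symm})\to\id$), and are explicitly stated in the paragraph immediately preceding the lemma.

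The principal obstacle, and the part requiring the most care, is the equivalence of the boundedness of $(P,Q)$ by $T_\ga,W_\gc$ with the conditions $\ga\leq\gt_1$ and $\gt_r\leq\gc$ of (\ref{e.r.stand_mon_on_ab}). Recall that $T_\ga$ and $W_\gc$ were defined precisely so that they encode $\ga$ and $\gc$ under the bijection $I_\gb(\text{Skew-symm})\to\id$. The task therefore reduces to identifying the dual pairs of chains in the pair of skew-symmetric multisets underlying $(P,Q)$ whose $OBRSK$-images control the boundedness inequality, and to verifying that these are, in effect, the extremal rows of the negative and positive parts of $(P,Q)$ — so that $(P_{C^-,(C^-)^\hash},Q_{C^-,(C^-)^\hash})^{up}$ is the single row indexing the smallest $\gt_i$ and $(P_{C^+,(C^+)^\hash},Q_{C^+,(C^+)^\hash})^{down}$ is the single row indexing the largest $\gt_i$.

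Once that identification is in hand, the multiset order used in (\ref{e.g.ortho_admissible_def}) (as defined in Section~4 of \cite{Kr-bkrs}) translates, via the row-by-row map, into the poset order on $\id$, so the boundedness inequalities become $\ga\leq\gt_1$ and $\gt_r\leq\gc$. Combining the four equivalences (degree, nonvanishing, semistandard, bounded) with Theorem~\ref{t.r.stand_mons_form_basis} then produces the desired bijective indexing of the degree $m$ standard monomials on $Y_\ga^\gc(\gb)$ by the degree $2m$ nonvanishing skew-symmetric notched bitableaux on $\BBZ$ bounded by $T_\ga,W_\gc$.
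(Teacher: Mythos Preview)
Your overall approach is exactly the paper's: use the row-by-row bijection $(\pr,\qr)\mapsto \pr-\qr$ from $I_\gb(\text{Skew-symm})$ to $\id$, and check that the defining conditions on each side correspond. The degree count, the identification of semistandardness with (\ref{e.r.stand_mon_1}), and of the nonvanishing condition with (\ref{e.r.stand_mon_2}), are all handled correctly.

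However, in the paragraph you flag as the ``principal obstacle'' you have conflated two different notions of ``bounded by $T,W$''. The condition appearing in the statement of the lemma is the one for \emph{notched bitableaux}, which (as the paper states in Section~\ref{s.ss_notched_bitab}) is taken verbatim from \S 5 of \cite{Kr-bkrs}: it is simply an inequality on the extremal rows of the negative and positive parts of $(P,Q)$ in the multiset order. There are no dual pairs of chains, no OBRSK-images, and no ``pair of skew-symmetric multisets underlying $(P,Q)$'' involved. The chain-based definition you invoke is Definition~\ref{defn.bounded_by_T,W}, which applies to pairs of \emph{multisets}, not to bitableaux; it enters only in Lemma~\ref{l.r.admiss_index} and Lemma~\ref{l.g.dominance}, not here. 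Once you use the correct definition, boundedness of $(P,Q)$ by $T_\ga,W_\gc$ translates directly, under the row-by-row bijection and by the very choice of $T_\ga,W_\gc$, into $\ga\leq\gt_1$ and $\gt_r\leq\gc$, with no further work required. The paper therefore disposes of the entire lemma in the single sentence preceding it.
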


Recall that $Chains_\ga^\gc(\gb)$ is the set $\{X_C\st C\ is\ a\ non-vanishing\ extended\ \gb-chain\ in\ \orootsb\ such\ that\ either\ (i)\ or\ (ii)\ of\ \ref{eq.chains_again}\ holds\}$.
\begin{equation}\label{eq.chains_again}
(i) C^-\ is\ non-empty\ and\ \ga\not\leq w_{C^-}^-(\gb).
(ii) C^+\ is\ non-empty\ and\ w_{C^+}^+(\gb)\not\leq\gc.
\end{equation}

For the rest of this section, we will use extensively the terminology and notation of \S 4 of \cite{Kr-bkrs}.

\begin{rem}\label{r.translate-cond-eq.chains}
Let $C$ be a non-vanishing extended $\gb$-chain in $\orootsb$. Note that $w_{C^-}^-(\gb)=\id(\diagb_{C^-}(-))$ and $w_{C^+}^+(\gb)=\id(\diagb_{C^+}(+))$. Consider the dual pairs $\{C^-,{C^-}^{\hash}\}$ and $\{C^+,{C^+}^{\hash}\}$ of chains in $\BBZ$, as defined above. Consider the row-strict notched bitableau $(P_{C^-,{C^-}^{\hash}},Q_{C^-,{C^-}^{\hash}})$ which is by definition $OBRSK(\psi^{-1}(C^-,{C^-}^{\hash}))$, and similarly consider the row-strict notched bitableau $(P_{C^+,{C^+}^{\hash}},Q_{C^+,{C^+}^{\hash}})$. Then consider the subsets of $\NN$ given by $(P_{C^-,{C^-}^{\hash}},Q_{C^-,{C^-}^{\hash}})^{up}$ and $(P_{C^+,{C^+}^{\hash}},Q_{C^+,{C^+}^{\hash}})^{down}$ (see definition ~\ref{defn.subsets-for-bitableau}). It is easy to see that since $C$ be an extended $\gb$-chain in $\orootsb$, the subsets $(P_{C^-,{C^-}^{\hash}},Q_{C^-,{C^-}^{\hash}})^{up}$ and $(P_{C^+,{C^+}^{\hash}},Q_{C^+,{C^+}^{\hash}})^{down}$ of $\NN$ are actually subsets of $\BBZ$. 

It is easy to observe that 
\begin{itemize}
\item ${\diagb_{C^-}(-)}_{(1)}=(P_{C^-,{C^-}^{\hash}},Q_{C^-,{C^-}^{\hash}})^{up}_{(1)}$ 
\item ${\diagb_{C^-}(-)}_{(2)}=(P_{C^-,{C^-}^{\hash}},Q_{C^-,{C^-}^{\hash}})^{up}_{(2)}$ 
\item ${\diagb_{C^+}(+)}_{(1)}=(P_{C^+,{C^+}^{\hash}},Q_{C^+,{C^+}^{\hash}})^{down}_{(1)}$ and 
\item ${\diagb_{C^+}(+)}_{(2)}=(P_{C^+,{C^+}^{\hash}},Q_{C^+,{C^+}^{\hash}})^{down}_{(2)}$
\end{itemize}
where for any multiset $U=\{(e_1,f_1),(e_2,f_2),\ldots\}$ on $\bN^2$, $U_{(1)}$ and $U_{(2)}$ are defined to be the multisets
$\{e_1,e_2,\ldots\}$ and $\{f_1,f_2,\ldots\}$ respectively on $\bN$ as in \S 4 of \cite{Kr-bkrs}.

It is now easy to see that the conditions $(i)$ and $(ii)$ for the non-vanishing extended $\gb$-chain $C$ in $\orootsb$ as mentioned in equation \ref{eq.chains_again} above can be translated into the conditions $(i)'$ and $(ii)'$ as mentioned below:---
\begin{equation}\label{eq.chains_translated_1}
(i)'\ C^-\ is\ non-empty\ and\ R_{\ga}-S_{\ga}\not\leq (P_{C^-,{C^-}^{\hash}},Q_{C^-,{C^-}^{\hash}})^{up}_{(1)}-(P_{C^-,{C^-}^{\hash}},Q_{C^-,{C^-}^{\hash}})^{up}_{(2)}.
\end{equation}

\begin{equation}\label{eq.chains_translated_2}
(ii)'\ C^+\ is\ non-empty\ and\ (P_{C^+,{C^+}^{\hash}},Q_{C^+,{C^+}^{\hash}})^{down}_{(1)}-(P_{C^+,{C^+}^{\hash}},Q_{C^+,{C^+}^{\hash}})^{down}_{(2)}\not\leq R_{\gc}-S_{\gc}.
\end{equation}
\end{rem}

\begin{lem}\label{l.r.admiss_index}
The pairs of non-vanishing skew-symmetric multisets on $\BBZ$ bounded by $T_\ga, W_\gc$ of degree $2m$ form
an indexing set for the degree $m$ monomials of $P/\langle Chains_\ga^\gc(\gb)\rangle$.
\end{lem}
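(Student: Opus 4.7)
The plan is to establish a degree-preserving bijection via $U \mapsto \{U, U^\hash\}$. A degree-$m$ monomial of $P$ is encoded by a cardinality-$m$ multiset $U \subseteq \orootsb$, with $X_U$ the corresponding monomial; since $\langle Chains_\ga^\gc(\gb)\rangle$ is a monomial ideal, $X_U$ represents a nonzero class in $P/\langle Chains_\ga^\gc(\gb)\rangle$ if and only if $U$ contains, as a sub-multiset, no non-vanishing extended $\gb$-chain $C$ with $X_C \in Chains_\ga^\gc(\gb)$, i.e., no such $C$ satisfying (i) or (ii) of \ref{eq.chains}. Independently, $U \mapsto \{U, U^\hash\}$ is by construction a degree-preserving bijection from cardinality-$m$ multisets on $\orootsb$ onto pairs of skew-symmetric multisets on $\BBZ$ of degree $2m$; the resulting pair is automatically non-vanishing in the sense of Section \ref{s.ss_lex_arrays}, because $\ol{\gb}$ and $\gb$ are disjoint, so no $(r,c) \in \orootsb$ satisfies $r = c$.

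It remains to show that for such a $U$, the pair $\{U, U^\hash\}$ is bounded by $T_\ga, W_\gc$ if and only if $U$ avoids every bad extended $\gb$-chain. By Remark \ref{r.a_particular_case_of_dual_pair_of_chains}, every dual pair of chains contained in $\{U, U^\hash\}$ has the form $\{C, C^\hash\}$ for a unique extended $\gb$-chain $C \subseteq U$, and every such $C$ arises this way. The boundedness condition of Definition \ref{defn.bounded_by_T,W}, with $T = T_\ga$ and $W = W_\gc$, therefore becomes the requirement that for every extended $\gb$-chain $C \subseteq U$,
\[
T_\ga \leq (P_{C^-,{C^-}^\hash}, Q_{C^-,{C^-}^\hash})^{up} \quad \text{and} \quad (P_{C^+,{C^+}^\hash}, Q_{C^+,{C^+}^\hash})^{down} \leq W_\gc,
\]
the empty-part cases being vacuous.

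To match this with the failure of (i) and (ii) of \ref{eq.chains}, I would invoke Remark \ref{r.translate-cond-eq.chains}, which reformulates (i) and (ii) as (i)' and (ii)' by expressing $\ga$ and $\gc$ via their $I_\gb(\text{Skew-symm})$-preimages $(\pr_\ga, \qr_\ga)$ and $(\pr_\gc, \qr_\gc)$ under the bijection $(\pr, \qr) \mapsto \pr - \qr$. Under this identification the inequality $T_\ga \leq (P_{C^-,{C^-}^\hash}, Q_{C^-,{C^-}^\hash})^{up}$ becomes precisely $\ga \leq w_{C^-}^-(\gb)$ in $\id$, and analogously $(P_{C^+,{C^+}^\hash}, Q_{C^+,{C^+}^\hash})^{down} \leq W_\gc$ becomes $w_{C^+}^+(\gb) \leq \gc$. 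Hence boundedness of $\{U, U^\hash\}$ by $T_\ga, W_\gc$ is equivalent to $\ga \leq w_{C^-}^-(\gb)$ (when $C^-$ is non-empty) and $w_{C^+}^+(\gb) \leq \gc$ (when $C^+$ is non-empty) for every extended $\gb$-chain $C \subseteq U$, i.e., to $U$ avoiding every bad chain. Chaining these equivalences with the first paragraph yields the lemma.

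The main technical obstacle is the careful matching of the multiset partial order produced by OBRSK with the natural order on $\id$: one must unpack how $(\pr, \qr) \leftrightarrow \pr - \qr$ sends the coordinate-wise order on $\id$ to the row-by-row multiset order on subsets of $\BBZ$, and verify that the projection/flip constructions of Subsection \ref{ss.extd-v-chains} that define $w_{C^\pm}^\pm(\gb)$ really agree with the top and bottom rows of the OBRSK output bitableaux, as asserted in Remark \ref{r.translate-cond-eq.chains}. Once this bookkeeping is in hand, the chain of equivalences above closes the proof.
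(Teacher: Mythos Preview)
Your proposal is correct and follows essentially the same route as the paper's own proof: both argue via the chain of equivalences $X_U$ survives in $P/\langle Chains_\ga^\gc(\gb)\rangle$ $\iff$ $U$ contains no bad extended $\gb$-chain $\iff$ every extended $\gb$-chain $C\subseteq U$ satisfies the two OBRSK-bound inequalities $\iff$ $\{U,U^\hash\}$ is bounded by $T_\ga,W_\gc$, invoking Remark~\ref{r.translate-cond-eq.chains} (the translation (i),(ii)$\leftrightarrow$(i)$'$,(ii)$'$) and Remark~\ref{r.a_particular_case_of_dual_pair_of_chains} at exactly the same points. Your additional remarks on non-vanishing (disjointness of $\gb$ and $\ol{\gb}$) and on the degree-doubling under $U\mapsto\{U,U^\hash\}$ are welcome explicitness but do not constitute a different approach.
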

\begin{Proof}
Note that 
\begin{align*}
\langle Chains_\ga^\gc(\gb)\rangle&= \langle x_C\mid C\hbox{ an extended $\gb$ chain in $\orootsb$}, \hbox{ either }\ref{eq.chains_translated_1}\hbox{ or } \ref{eq.chains_translated_2}\hbox{ holds }\rangle\\
&=\langle x_C\mid C\hbox{ an extended $\gb$ chain in $\orootsb$}, T_\ga\not\leq (P_{C^-,{C^-}^{\hash}},Q_{C^-,{C^-}^{\hash}})^{up}\\
&\ \hbox{ or }(P_{C^+,{C^+}^{\hash}},Q_{C^+,{C^+}^{\hash}})^{down}\not\leq W_\gc\rangle.
\end{align*}
Therefore,\\

\noindent $x_U$ is a monomial in $P/\langle Chains_\ga^\gc(\gb)\rangle$
\begin{itemize}
\item[$\iff$] $x_U$ is not divisible by any $x_C$, $C$ an extended $\gb$ chain in $\orootsb$ 
such that $T_\ga\not\leq (P_{C^-,{C^-}^{\hash}},Q_{C^-,{C^-}^{\hash}})^{up}$ or $(P_{C^+,{C^+}^{\hash}},Q_{C^+,{C^+}^{\hash}})^{down}\not\leq W_\gc$

\item[$\iff$] $U$ contains no extended $\gb$-chains $C$ such that $T_\ga\not\leq
(P_{C^-,{C^-}^{\hash}},Q_{C^-,{C^-}^{\hash}})^{up}$ or $(P_{C^+,{C^+}^{\hash}},Q_{C^+,{C^+}^{\hash}})^{down}\not\leq W_\gc$

\item[$\iff$] $T_\ga\leq (P_{C^-,{C^-}^{\hash}},Q_{C^-,{C^-}^{\hash}})^{up}$ and $(P_{C^+,{C^+}^{\hash}},Q_{C^+,{C^+}^{\hash}})^{down}\leq W_\gc$, for every extended $\gb$-chain $C$ in $U$

\item[$\iff$] The pair $\{U,U^{\hash}\}$ of skew-symmetric multisets on $\BBZ$ is bounded by $T_\ga,W_\gc$.
\end{itemize}
\end{Proof}
\noindent We are now ready to prove the main result of the paper.
\begin{proof}[Proof of Theorem \ref{t.main}.] We wish to show that $\init I=\langle Chains_\ga^\gc(\gb)\rangle$. Since we already know from Remark \ref{r.Chains-in-initI} that $Chains_\ga^\gc(\gb)\subseteq\init I$, it follows that $\langle Chains_\ga^\gc(\gb)\rangle\subseteq\init I$. For any $m\geq 1$,\\

\noindent\# of degree $m$ monomials in $P/\langle Chains_\ga^\gc(\gb)\rangle$
\begin{itemize}
\item[$\stackrel{a}{=}$] \# of pairs of non-vanishing skew-symmetric multisets on $\BBZ$ bounded by $T_\ga, W_\gc$ of degree $2m$

\item[$\stackrel{b}{\leq}$] \# of nonvanishing skew-symmetric notched bitableaux on $\BBZ$ bounded by $T_\ga, W_\gc$ of degree $2m$

\item[$\stackrel{c}{=}$] \# of degree $m$ standard monomials on
$Y_{\ga,\gb}^\gc$

\item[$\stackrel{d}{=}$] \# of degree $m$ monomials in
$P/\init I$,
\end{itemize}

\noindent where $a$ follows from Lemma \ref{l.r.admiss_index}, $b$
from Corollary \ref{c.g.beta_dominance_2}, $c$ from Lemma
\ref{l.r.skewsymm_index}, and $d$ from the fact that standard
monomials on $Y_{\ga,\gb}^\gc$ and the monomials in
$P/\init I$ both induce
homogeneous bases for $P/I$. Thus $\langle Chains_\ga^\gc(\gb)\rangle\supseteq\init I$.

We point out that, as a consequence of this proof, inequality b is
actually an equality.
\end{proof}

\section{Proofs}\label{s.Proofs}
In this section, we will use extensively the terminology and notation of \S 4 of \cite{Kr-bkrs}.

\subsection{Proof of Lemma~\ref{l.neg-skewlexarrays-to-neg-skew-bitab}}\label{ss.l.neg-skewlexarrays-to-neg-skew-bitab}
\begin{Proof} 
The proof is by induction, the base case of induction is easy to see. Let $\{\pi_1^{(t-1)},\pi_2^{(t-1)}\}$ be a pair of negative Skew-symmetric lexicographic arrays given by:---
\begin{equation}\pi_1^{(t-1)}= \left(\begin{array}{ccc}
b_1 & \cdots & b_{t-1}\\
a_1 & \cdots & a_{t-1}\\ \end{array}\right)\ and\ \pi_2^{(t-1)}= \left(\begin{array}{ccc}
c_2 & \cdots & c_{t}\\
d_2 & \cdots & d_{t}\\ \end{array}\right)\end{equation} 
Let $(P,Q)=OBRSK(\{\pi_1^{(t-1)},\pi_2^{(t-1)}\})$. Assume inductively that $(P,Q)$ is a negative Skew-symmetric notched bitableau. Now let $\{\pi_1^{(t)},\pi_2^{(t)}\}$ be a pair of negative Skew-symmetric lexicographic arrays given by:---
\begin{equation}\pi_1^{(t)}= \left(\begin{array}{ccc}
b_1 & \cdots & b_{t}\\
a_1 & \cdots & a_{t}\\ \end{array}\right)\ and\ \pi_2^{(t)}= \left(\begin{array}{ccc}
c_1 & \cdots & c_{t}\\
d_1 & \cdots & d_{t}\\ \end{array}\right)\end{equation}   
that is,  $\{\pi_1^{(t)},\pi_2^{(t)}\}$ is obtained by attaching the elements $b_t,a_t,c_1,d_1$ to $\{\pi_1^{(t-1)},\pi_2^{(t-1)}\}$ in a way such that the resulting pair of arrays $\{\pi_1^{(t)},\pi_2^{(t)}\}$ is again negative Skew-symmetric lexicographic. Let $(P',Q')=OBRSK(\{\pi_1^{(t)},\pi_2^{(t)}\})$. It suffices to show that $(P',Q')$ is also a negative Skew-symmetric notched bitableau. 

We will first prove that $(P',Q')$ is a negative semistandard notched bitableau. The fact that $P'$ is row-strict follows from lemma ~\ref{l.P(i)_row_strict}. It then follows from duality that $Q'$ is also row-strict. Hence $(P',Q')$ is row-strict. Let $r'$ be the total number of rows of $P'(or\ Q')$. Let $P_i'(resp.\ Q_i')$ denote the set of all elements in the $i$-th row of $P'(resp.\ Q')$. It needs to be shown that $P_i'-Q_i'\leq P_{i+1}'-Q_{i+1}'$ for $1\leq i\leq r'-1$, and $P_{r'}'-Q_{r'}'<\es$. We will prove the former statement first. To prove that $P_i'-Q_i'\leq P_{i+1}'-Q_{i+1}'$ for $1\leq i\leq r'-1$, there will be three non-trivial possibilities which will be registered below as cases I,II, and III respectively. Let $P_i(resp\ Q_i)$ denote the set of all elements in the $i$-th row of $P(resp\ Q)$.

\noindent\textbf{Case I}: $a_t$ and $d_1$ are added to the first row of $P$ and $c_1$ and $b_t$ are added to the first row of $Q$. All rows other than the $1$-st row of $P$ and $Q$ remain unchanged. 

In this case, it suffices to show that $P_1'-Q_1'\leq P_{2}'-Q_{2}'$ which is the same as showing that $P_1'\disjunion Q_2'\leq P_2'\disjunion Q_1'$. Note that $P_1'=P_1\disjunion\{a_t,d_1\}$, $Q_1'=Q_1\disjunion\{c_1,b_t\}$, $P_2'=P_2$ and $Q_2'=Q_2$. Since $(P,Q)$ is assumed to be a negative Skew-symmetric notched bitableau, we have $P_1\disjunion Q_2\leq P_2\disjunion Q_1$. It then suffices to show that $P_1\disjunion Q_2\disjunion\{a_t,d_1\}\leq P_2\disjunion Q_1\disjunion\{c_1,b_t\}$. 

Since $a_t$ and $d_1$ are the duals (with respect to the pair $\{\pi_1^{(t)},\pi_2^{(t)}\}$ of arrays) of $c_1$ and $b_t$ respectively, and the elements of $P_1\disjunion Q_2$ are the duals (with respect to $(P,Q)$) of the elements of $P_2\disjunion Q_1$, therefore the positions at which $c_1$ and $b_t$ appear in the set $P_2\disjunion Q_1\disjunion\{c_1,b_t\}$ (when the elements of the set are written in ascending order) are `dual' to the positions at which $a_t$ and $d_1$ appear in the set $P_1\disjunion Q_2\disjunion\{a_t,d_1\}$ (when the elements of the set are written in ascending order). \textit{[In the sense that if $c_1$ appears at the $l$-th position (counting from left to right) in the set $P_2\disjunion Q_1\disjunion\{c_1,b_t\}$ (when the elements of the set are written in ascending order), then $a_t$ appears at the reverse $l$-th position (that is, the $l$-th position counting from right to left) in the set $P_1\disjunion Q_2\disjunion\{a_t,d_1\}$ (when the elements of the set are written in ascending order), and a similar thing is true for the positions of $b_t$ and $d_1$.]} The above fact, together with the facts that $a_t<d_1$, $b_t<c_1$, $a_t\leq b_t$ (in fact, $a_t<b_t$) and $d_1\leq c_1$ (in fact, $d_1<c_1$) imply easily that $P_1\disjunion Q_2\disjunion\{a_t,d_1\}\leq P_2\disjunion Q_1\disjunion\{c_1,b_t\}$. Hence we are done in this case. 

\noindent\textbf{Case II}: $x_p$ bumps $y_p$ from $P_i$ and $y_p$ bumps $z_p$ from $P_{i+1}$, and the dual bumping happens on $Q_i$ and $Q_{i+1}$. Let us express the dual bumping by saying that $x_q$ bumps $y_q$ from $Q_i$ and $y_q$ bumps $z_q$ from $Q_{i+1}$.

Clearly then, $x_p\leq y_p\leq z_p<b_t$ and hence $x_q\geq y_q\geq z_q$ by the property (ii) in the definition of a Skew-symmetric notched bitableau. Again since all entries in $Q$ are $\geq b_t$, we get that $x_q\geq y_q\geq z_q\geq b_t$. It is also easy to note that
\begin{center}
$P_i'=(P_i\setminus\{y_p\})\disjunion\{x_p\}$\hspace{3cm} $P_{i+1}'=(P_{i+1}\setminus\{z_p\})\disjunion\{y_p\}$\\
$Q_i'=(Q_i\setminus\{y_q\})\disjunion\{x_q\}$\hspace{3cm} $Q_{i+1}'=(Q_{i+1}\setminus\{z_q\})\disjunion\{y_q\}$
\end{center}
We need to show that $P_i'-Q_i'\leq P_{i+1}'-Q_{i+1}'$, or in other words, $P_i'\disjunion Q_{i+1}'\leq P_{i+1}'\disjunion Q_i'$. Note that $P_i'\disjunion Q_{i+1}'=[(P_i\disjunion Q_{i+1})\setminus\{y_p,z_q\}]\disjunion\{x_p,y_q\}$ and $P_{i+1}'\disjunion Q_i'=[(P_{i+1}\disjunion Q_i)\setminus\{z_p,y_q\}]\disjunion\{x_q,y_p\}$. It suffices to prove that $(P_i\disjunion Q_{i+1})\setminus\{y_p,z_q\}\leq (P_{i+1}\disjunion Q_i)\setminus\{z_p,y_q\}$ because if we can prove this much, then since $x_p\leq y_p, x_q\geq y_q, x_p<y_q\ and\ y_p<x_q$, we can give an argument exactly similar to Case I to prove that $[(P_i\disjunion Q_{i+1})\setminus\{y_p,z_q\}]\disjunion\{x_p,y_q\}\leq [(P_{i+1}\disjunion Q_i)\setminus\{z_p,y_q\}]\disjunion\{x_q,y_p\}$. 

We will now prove that $(P_i\disjunion Q_{i+1})\setminus\{y_p,z_q\}\leq (P_{i+1}\disjunion Q_i)\setminus\{z_p,y_q\}$. The proof of this follows easily from the facts mentioned below: Since $z_p<b_t$ and all entries of $Q_i$ are $\geq b_t$, therefore $z_p$ is the smallest element in $P_{i+1}\disjunion Q_i$ which is $\geq y_p$. It then follows from duality that $z_q$ is the biggest element in $P_i\disjunion Q_{i+1}$ that is $\leq y_q$. Also since $y_p<b_t\leq z_q$, we have $y_p<z_q$, and hence by duality $y_q>z_p$. Since $(P,Q)$ is assumed to be a negative Skew-symmetric notched bitableau, we have $P_i\disjunion Q_{i+1}\leq P_{i+1}\disjunion Q_i$. All these facts put together prove the required thing easily and we are done in this case.

\noindent\textbf{Case III}: $x_p$ bumps $y_p$ from $P_i$ and, $y_p$ along with $d_1$ are added to $P_{i+1}$. The dual phenomenon happens with $Q_i$ and $Q_{i+1}$, we express the dual phenomenon by saying that $x_q$ bumps $y_q$ from $Q_i$ and, $y_q$ along with $b_t$ are added to $Q_{i+1}$.

Clearly then, $x_p\leq y_p<b_t$ and hence $x_q\geq y_q$ by the property (ii) in the definition of a Skew-symmetric notched bitableau. Again since all entries in $Q$ are $\geq b_t$, we get that $x_q\geq y_q\geq b_t$. It is also easy to note that 
\begin{center}
$P_i'=(P_i\setminus\{y_p\})\disjunion\{x_p\}$\hspace{3cm} $P_{i+1}'=P_{i+1}\disjunion\{y_p\}\disjunion\{d_1\}$\\
$Q_i'=(Q_i\setminus\{y_q\})\disjunion\{x_q\}$\hspace{3cm} $Q_{i+1}'=Q_{i+1}\disjunion\{b_t\}\disjunion\{y_q\}$
\end{center}
We need to show that $P_i'-Q_i'\leq P_{i+1}'-Q_{i+1}'$, or in other words, $P_i'\disjunion Q_{i+1}'\leq P_{i+1}'\disjunion Q_i'$. Note that $P_i'\disjunion Q_{i+1}'=(P_i\disjunion Q_{i+1}\setminus\{y_p\})\disjunion\{b_t\}\disjunion\{x_p,y_q\}$ and $P_{i+1}'\disjunion Q_i'=(P_{i+1}\disjunion Q_i\setminus\{y_q\})\disjunion\{d_1\}\disjunion\{x_q,y_p\}$. Using arguments similar to Case II, it follows that it is enough to prove that $(P_i\disjunion Q_{i+1}\setminus\{y_p\})\disjunion\{b_t\}\leq (P_{i+1}\disjunion Q_i\setminus\{y_q\})\disjunion\{d_1\}$, which we will do now.

Note that there are no elements of $P_{i+1}$ which are $\geq y_p$ and $<b_t$. Also since $b_t$ is $\leq$ all elements of $Q_i$, therefore we can conclude that there are no elements of $P_{i+1}\disjunion Q_i$ which are $\geq y_p$ and $<b_t$. Let $\alpha_1\leq\ldots\leq\alpha_k$ and $\beta_1\leq\ldots\leq\beta_k$ denote the multisets $P_i\disjunion Q_{i+1}$ and $P_{i+1}\disjunion Q_i$ respectively. Since $(P,Q)$ is assumed to be a negative Skew-symmetric notched bitableau, we have $P_i\disjunion Q_{i+1}\leq P_{i+1}\disjunion Q_i$, that is, $\alpha_i\leq\beta_i\ \forall i\in\{1,\ldots,k\}$. Let $y_p=\alpha_{l+1}$. Since there are no elements of $P_{i+1}\disjunion Q_i$ which are $\geq y_p$ and $<b_t$, it follows that $\beta_{l+1}\geq b_t$ and $\beta_l$ should either be $<y_p$ or $\geq b_t$. Since $y_p<b_t$, it is now easy to see that $(P_i\disjunion Q_{i+1}\setminus\{y_p\})\disjunion\{b_t\}\leq P_{i+1}\disjunion Q_i$. 

Since the elements of $P_i\disjunion Q_{i+1}$ and $P_{i+1}\disjunion Q_i$ are dual to each other with respect to the natural partial order $\leq$ on the set of all integers, and the total number of elements in $P_i\disjunion Q_{i+1}(or\ in\ P_{i+1}\disjunion Q_i)$ is even, it follows that $\beta_{l+1}\neq y_q$. So either $y_q=\beta_j$ for some $j\in\{1,\ldots,l\}$ or $y_q=\beta_j$ for some $j\in\{l+2,\ldots,k\}$. Let us write the elements of $(P_i\disjunion Q_{i+1}\setminus\{y_p\})\disjunion\{b_t\}$ as $\gamma_1\leq\ldots\leq\gamma_k$. Clearly then $b_t=\gamma_s$ for some $s\geq l+1$. Since $y_p<b_t$, it follows from duality that $y_q>d_1$. It also follows from duality that there are no elements of $P_i\disjunion Q_{i+1}$ which are $\leq y_q$ and $>d_1$. Hence there are no elements of $(P_i\disjunion Q_{i+1})\setminus\{y_p\}$ which are $\leq y_q$ and $>d_1$. Since $y_q\geq b_t$, it also follows that EITHER there are no elements of $((P_i\disjunion Q_{i+1})\setminus\{y_p\})\disjunion\{b_t\}$ which are $\leq y_q$ and $>d_1$ OR the only possible element in $((P_i\disjunion Q_{i+1})\setminus\{y_p\})\disjunion\{b_t\}$ which is $\leq y_q$ and $>d_1$ is $b_t$ itself (in which case $b_t>d_1$). 

Now suppose that $y_q=\beta_j$ for some $j\in\{1,\ldots,l\}$. Then since $y_p=\alpha_{l+1}, y_p<b_t$ and $b_t=\gamma_s$ for some $s\geq l+1$, it follows that $\gamma_j$ should be $\leq d_1$ and $\gamma_j\neq b_t$. Now since we already know that $(P_i\disjunion Q_{i+1}\setminus\{y_p\})\disjunion\{b_t\}\leq P_{i+1}\disjunion Q_i$, it is easy to see that $(P_i\disjunion Q_{i+1}\setminus\{y_p\})\disjunion\{b_t\}\leq (P_{i+1}\disjunion Q_i\setminus\{y_q\})\disjunion\{d_1\}$ in the case when $y_q=\beta_j$ for some $j\in\{1,\ldots,l\}$. Let us now work out the other case, that is, the case when $y_q=\beta_j$ for some $j\in\{l+2,\ldots,k\}$. Then there are two possibilities for $\gamma_j$: EITHER $\gamma_j\leq d_1$ OR $\gamma_j=b_t$ (where $b_t>d_1$) and $\gamma_t\leq d_1$ for all $t\in\{1,\ldots,j-1\}$. If $\gamma_j\leq d_1$, then since we already know that $(P_i\disjunion Q_{i+1}\setminus\{y_p\})\disjunion\{b_t\}\leq P_{i+1}\disjunion Q_i$, it is easy to see that $(P_i\disjunion Q_{i+1}\setminus\{y_p\})\disjunion\{b_t\}\leq (P_{i+1}\disjunion Q_i\setminus\{y_q\})\disjunion\{d_1\}$. But if $\gamma_j=b_t$ (where $b_t>d_1$) and $\gamma_t\leq d_1$ for all $t\in\{1,\ldots,j-1\}$, then since $\beta_{l+1}\geq b_t$, $b_t>d_1$ and $(P_i\disjunion Q_{i+1}\setminus\{y_p\})\disjunion\{b_t\}\leq P_{i+1}\disjunion Q_i$, it follows easily that $(P_i\disjunion Q_{i+1}\setminus\{y_p\})\disjunion\{b_t\}\leq (P_{i+1}\disjunion Q_i\setminus\{y_q\})\disjunion\{d_1\}$. Hence we are done in Case III. 

We have now proved that $(P',Q')$ is a semistandard notched bitableau. To prove that the semistandard notched bitableau $(P',Q')$ is negative, it is enough to prove that $P_{r'}'-Q_{r'}'<\es$ where $r'$ denotes the total number of rows of $P'(or\ Q')$. If $P_{r'}'=P_{r'}$ and $Q_{r'}'=Q_{r'}$, then since $(P,Q)$ is assumed to be a negative Skew-symmetric notched bitableau, it follows immediately that $P_{r'}'-Q_{r'}'<\es$. Otherwise, we do the following: Let $P_{r'}'$ and $Q_{r'}'$ be given by $\lambda_1<\ldots<\lambda_{s'}$ and $\delta_1<\ldots<\delta_{s'}$ respectively. It follows easily from the way the OBRSK algorithm works that there exists at least one entry in $P_{r'}'$ which is $<b_t$. Let $\lambda_l$ denote the largest entry in $P_{r'}'$ which is $<b_t$. Since $b_t\leq\delta_1$, it now follows easily that $\lambda_j<\delta_j\ \forall j\in\{1,\ldots,l\}$. Since $\lambda_l$ is the largest entry in $P_{r'}'$ which is $<b_t$, therefore $l\geq 1$ and it follows from duality that $\delta_{s'+1-l}=\delta_{s'-(l-1)}>d_1$. Hence $\delta_{s'}\geq\delta_{s'-(l-1)}>d_1=\lambda_{s'}$. It only remains to show that if $l+1<s'$, then $\lambda_j<\delta_j\ \forall j\in\{l+1,\ldots,s'-1\}$. Clearly since $l\geq 1$ and $l+1<s'$, it follows that in this case, the last row of $P(resp.\ Q)$ is the $r'$-th row, namely $P_{r'}(resp.\ Q_{r'})$ and $\delta_{s'-(l-1)}$ is the new box of the dual insertion in $Q_{r'}$. Since $\lambda_{s'}=d_1, \delta_{s'-(l-1)}>d_1$, $P_{r'}-Q_{r'}<\es$ by induction hypothesis, and $\lambda_{j}<\lambda_{s'}\forall\ j\in\{l+1,\ldots,s'-1\}$, it is now easy to see that $\lambda_j<\delta_j\ \forall j\in\{l+1,\ldots,s'-1\}$. 

Hence we have proved that $(P',Q')$ is a negative semistandard notched bitableau. The fact that $(P',Q')$ is Skew-symmetric is easy to see from the very construction of the $OBRSK$ algorithm and from the fact that the pair $\{\pi_1^{(t)},\pi_2^{(t)}\}$ of arrays is Skew-symmetric, lexicographic. 
\end{Proof}
\subsection{Proof of Lemma \ref{l.g.dominance}}\label{ss.l.g.dominance}
\begin{Proof}
Let $\{U_1,U_2\}$ be a pair of non-vanishing skew-symmetric multisets on $\NN$, and let $T$ and $W$ be negative and positive subsets of $\NN$ respectively, with the property that condition \ref{e.g.Kreiman's condition (*)} is satisfied. Lemma \ref{l.g.dominance} is part(v) of the following lemma.
\begin{lem}
(i) Suppose that $\{U_1=\{(a_1,b_1),\ldots,(a_t,b_t)\},U_2=\{(d_1,c_1),\ldots,(d_t,c_t)\}\}$ is a pair of negative skew-symmetric 
multisets on $\NN$ such that ${\psi}^{-1}(\{U_1,U_2\})=\{\pi_1,\pi_2\}$ where
\begin{equation}\pi_1= \left(\begin{array}{ccc}
b_1 & \cdots & b_{t}\\
a_1 & \cdots & a_{t}\\ \end{array}\right)\ ,\ \pi_2= \left(\begin{array}{ccc}
c_1 & \cdots & c_{t}\\
d_1 & \cdots & d_{t}\\ \end{array}\right)\end{equation}
and $\{\pi_1,\pi_2\}$ is a pair of negative skew-symmetric lexicographic arrays.
For $k=1,\ldots,t$, let $U_1^{(k)}:=\{(a_1,b_1),\ldots,(a_k,b_k)\}$ and $U_2^{(k)}:=\{(d_{t+1-k},c_{t+1-k}),\ldots,(d_t,c_t)\}$.
Let $\{\pi_1^k,\pi_2^k\}:={\psi}^{-1}(\{U_1^{(k)},U_2^{(k)}\})$. Let $(P^{(k)},Q^{(k)})=OBRSK(\{\pi_1^k,\pi_2^k\})$ (note that
$(P^{(t)},Q^{(t)})=OBRSK(\{\pi_1^t,\pi_2^t\})$). Define
$\{p^{(k)}_1,\ldots,p^{(k)}_{2c_k}\}$ to be the topmost row of
$P^{(k)}$ and $\{q^{(k)}_1,\ldots,q^{(k)}_{2c_k}\}$ the topmost row
of $Q^{(k)}$, both listed in increasing order. Let
$m(k):=\max\{m\in\{1,\ldots,2c_k\}\mid
p^{(k)}_{m}<q^{(k)}_1\}=|(P^{(k)}_1)^{<q^{(k)}_1}|$. Then for
$1\leq j\leq m(k)$, there exists a dual pair of chains $\{C_{k,j}^{(1)},C_{k,j}^{(2)}\}$ in $\{U_1^{(k)},U_2^{(k)}\}$ 
which has at most $j$ elements each in $C_{k,j}^{(1)}$ and $C_{k,j}^{(2)}$, and there exists an integer $\chi_{k,j}$ which is $\geq j$ such that the first coordinate of the $\chi_{k,j}$-th element of $(P_{C_{k,j}^{(1)-},C_{k,j}^{(2)-}},Q_{C_{k,j}^{(1)-},C_{k,j}^{(2)-}})^{up}$ is $p^{(k)}_j$ and, all the entries which occur as first coordinates of elements of $C_{k,j}^{(1)}$ form a subset of the set of all entries which occur as first coordinates of elements of $(P_{C_{k,j}^{(1)-},C_{k,j}^{(2)-}},Q_{C_{k,j}^{(1)-},C_{k,j}^{(2)-}})^{up}_{\leq\chi_{k,j}}$.\\
\noindent (ii) If $\{U_1,U_2\}$ is bounded by $T,\es$, then
$(P^{(k)},Q^{(k)})$ is bounded by $T,\es$ for all $k=1,\ldots,t$.\\
\noindent (iii) If $\{U_1,U_2\}$ is
bounded by $T,\es$, then $OBRSK(\{U_1,U_2\})$ is bounded by $T,\es$.\\
\noindent (iv) If $\{U_1,U_2\}$ is bounded by $\es,W$, then $OBRSK(\{U_1,U_2\})$ is
bounded by $\es,W$.\\
\noindent (v) If $\{U_1,U_2\}$ is bounded by $T,W$, then $OBRSK(\{U_1,U_2\})$ is
bounded by $T,W$.
\end{lem}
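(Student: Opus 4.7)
The plan is to prove the compound lemma by induction-within-induction: part (i) is the technical engine, established by induction on $k$; part (ii) follows from part (i) applied at $k=t$; parts (iii) and (iv) are reduced to (ii) via the two involutions $\iota$ (on skew-symmetric notched bitableaux) and $L$ (on pairs of skew-symmetric lexicographic arrays); and part (v), which is the statement of Lemma~\ref{l.g.dominance} itself, will follow by splitting $\{U_1,U_2\}$ into its negative and positive parts and invoking (iii) and (iv) respectively. Throughout, the overall skeleton is that of Section~8 of \cite{Kr-bkrs}, but the data travels in dual pairs and we have to carry the extra bookkeeping that keeps the dual structure intact at every step.

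For part (i) I would induct on $k$. The base case $k=1$ is direct: $(P^{(1)},Q^{(1)})$ has a single row $(a_1,d_t)$ above $(b_1,c_t)$, and the dual pair $\{\{(a_1,b_1)\},\{(d_t,c_t)\}\}$ with $\chi_{1,1}=1$ witnesses the claim. For the inductive step, pass from $(P^{(k)},Q^{(k)})$ to $(P^{(k+1)},Q^{(k+1)})$ by first performing the bounded insertion $P^{(k)}\la{b_{k+1}}a_{k+1}$ with its dual on $Q^{(k)}$, and then appending $d_{t-k}$ at the right end and $b_{k+1}$ at the left end of the stabilising row. The change to the top row sorts into a small number of geometric cases (the bumping stays in row one, or it displaces a row-one entry, or it bypasses the top row altogether), and in each case I construct $\{C^{(1)}_{k+1,j},C^{(2)}_{k+1,j}\}$ either by reusing a previously built chain $\{C^{(1)}_{k,j'},C^{(2)}_{k,j'}\}$ or by augmenting it with the new column $(a_{k+1},b_{k+1})$ on the $U_1$-side and its dual $(d_{t-k},c_{t-k})$ on the $U_2$-side. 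The bound $|C^{(1)}_{k+1,j}|\leq j$ is maintained because each augmentation uses exactly one new column, and the index $\chi_{k+1,j}$ shifts by at most one according to the position of $p^{(k+1)}_j$ in the new top row.

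Given part (i), part (ii) follows by applying it at $k=t$: for each $j\leq m(t)$, the dual pair $\{C^{(1)}_{t,j},C^{(2)}_{t,j}\}$ lies in $\{U_1,U_2\}$, is negative (since $p^{(t)}_j < q^{(t)}_1$ places these columns in the negative part), and by the hypothesis $T\leq (P_{C^{(1)-}_{t,j},C^{(2)-}_{t,j}},Q_{C^{(1)-}_{t,j},C^{(2)-}_{t,j}})^{up}$. A coordinate-wise comparison, using the fact that the first $\chi_{t,j}$ entries of $(P_{C^{(1)-}_{t,j},C^{(2)-}_{t,j}},Q_{C^{(1)-}_{t,j},C^{(2)-}_{t,j}})^{up}$ are among those of $(P^{(t)},Q^{(t)})^{up}$, yields $T\leq (P^{(t)},Q^{(t)})^{up}$. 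Part (iii) is a restatement of (ii). Part (iv) is obtained by applying $L$ to $\{U_1,U_2\}$, which turns a positive pair into a negative one while sending $W$ to an appropriate $T'$, invoking (iii), and then applying $\iota$ to the resulting bitableau, which swaps negative/positive and top/bottom; the compatibility of $L$ and $\iota$ with the OBRSK (built into its extension to positive pairs) makes this translation automatic. Part (v) then follows by decomposing $\{U_1,U_2\}=\{U_1^-,U_2^-\}\disjunion\{U_1^+,U_2^+\}$, observing that the bound by $T,W$ reduces to bound by $T,\es$ on the negative part and $\es,W$ on the positive part, and applying (iii) and (iv) to each part separately; the construction of OBRSK on non-vanishing pairs by its action on the two parts (Figure~\ref{f.obrsk}) ensures the outputs combine correctly.

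The principal obstacle is the inductive step of part (i): we must verify, in each bumping case, that the augmented pair $\{C^{(1)}_{k+1,j},C^{(2)}_{k+1,j}\}$ really is a dual pair of chains in $\{U_1^{(k+1)},U_2^{(k+1)}\}$ in the sense of Definition~\ref{defn.dual_pair_of_chains_in_pairs_of_multisets}, not merely a pair of chains. This forces us to trace, column by column, the duality condition (v) in the definition of a skew-symmetric lexicographic array, showing that whenever the insertion on the $P$-side bumps the forward $j_s$-th entry of row $s$, the dual insertion on the $Q$-side bumps the backward $j_s$-th entry of row $s$, and that these two movements correspond to columns of $\pi_1,\pi_2$ that are dual in the sense of Definition~\ref{defn.columns-dual-toeachother}. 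A secondary delicate point, arising in part (v), is that the negative/positive decomposition of $\{U_1,U_2\}$ on the $U_1$-side must be matched column-by-column by that on the $U_2$-side; this is precisely what condition (vi) in the definition of a skew-symmetric lexicographic array guarantees, and without it the decomposition used to deduce (v) from (iii) and (iv) would not respect the dual pair structure.
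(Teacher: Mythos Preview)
Your scaffolding for (iii)--(v) matches the paper, but the logical relationship you posit between (i) and (ii) is where the argument breaks. In the paper, (i) and (ii) are proven by \emph{simultaneous} induction on $k$: the inductive step for (ii) must show $|(T_{(1)}-T_{(2)})^{\leq z}|\geq|(P'_1-Q'_1)^{\leq z}|$ for every positive integer $z$, and the argument splits according to the position of $z$ relative to $a,p_l,q_{2\hat c+1-l},c$ (Case~1) or $a,b,d,c$ (Case~2). For the ``easy'' ranges of $z$ one has $|(P'_1-Q'_1)^{\leq z}|=|(P_1-Q_1)^{\leq z}|$ and the inequality comes directly from the \emph{induction hypothesis for (ii)}; only for the ``hard'' ranges (e.g.\ $a\leq z<p_l$ or $q_{2\hat c+1-l}\leq z<c$) does one invoke the freshly constructed chain $\{C_{k+1,l}^{(1)},C_{k+1,l}^{(2)}\}$ from (i) together with the boundedness hypothesis on $\{U_1,U_2\}$. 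Your plan to deduce (ii) from (i) alone, without running (ii)'s own induction, omits the first half of this and cannot cover all $z$.

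Concretely, the step ``the first $\chi_{t,j}$ entries of $(P_{C^{(1)-}_{t,j},C^{(2)-}_{t,j}},Q_{C^{(1)-}_{t,j},C^{(2)-}_{t,j}})^{up}$ are among those of $(P^{(t)},Q^{(t)})^{up}$'' is not what (i) asserts: (i) only pins down the first coordinate of the $\chi_{k,j}$-th element of the chain's OBRSK top row and says that the first coordinates of $C_{k,j}^{(1)}$ land among the first $\chi_{k,j}$ first-coordinates of that same top row---it says nothing about containment in $(P^{(k)},Q^{(k)})^{up}$. So your coordinate-wise comparison does not go through. (You have also transposed the roles of (ii) and (iii): (ii) is the statement for \emph{all} $k$, and (iii) is its specialisation to $k=t$; it is (iii) that is immediate from (ii), not the reverse.) The fix is exactly what the paper does: carry (ii) inductively alongside (i), using the previous step of (ii) on the ranges of $z$ where $P'_1-Q'_1$ and $P_1-Q_1$ agree, and the chain from (i) together with the boundedness of $\{U_1,U_2\}$ on the remaining ranges.
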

\begin{proof}
We prove (i) and (ii) together by induction on $k$, with $k=1$ the starting point of induction. When $k=1$, $U_1^{(1)}=\{(a_1,b_1)\}$ and $U_2^{(1)}=\{(d_t,c_t)\}$. Clearly then, $P^{(1)}$ is given by a single row tableau containing the two elements $a_1$ and $d_t$ where $a_1<d_t$ and, $Q^{(1)}$ is given by a single row tableau containing the two elements $b_1$ and $c_t$ where $b_1<c_t$. 

For (i), there are two possible cases, namely when $m(k)=1$ and $m(k)=2$. In both the cases, for every $j\in\{1,\ldots,m(k)\}$, take $C_{1,j}^{(1)}=\{(a_1,b_1)\}$ and $C_{1,j}^{(2)}=\{(d_t,c_t)\}$.

For (ii), $\{U_1,U_2\}$ is bounded by $T,\es$ implies that for the dual pair $\{C_{1,j}^{(1)},C_{1,j}^{(2)}\}$ of chains in $\{U_1,U_2\}$ as mentioned above, we have:---
\begin{equation}\label{e.g.1_is_bounded_by_T,phi}
T\leq (P_{C_{1,j}^{(1)-},C_{1,j}^{(2)-}},Q_{C_{1,j}^{(1)-},C_{1,j}^{(2)-}})^{up}\leq \es
\end{equation}
But it can be easily seen that $(P_{C_{1,j}^{(1)-},C_{1,j}^{(2)-}},Q_{C_{1,j}^{(1)-},C_{1,j}^{(2)-}})^{up}=\{(a_1,b_1),(d_t,c_t)\}$. So equation \ref{e.g.1_is_bounded_by_T,phi} above is clearly equivalent to saying that $(P^{(1)},Q^{(1)})$ is bounded by $T,\es$. This proves parts (i) and (ii) of the base case of induction.

Now let $k\in 1,\ldots,t-1$.  Let $(P,Q)=(P^{(k)},Q^{(k)})$,
$a=a_{k+1}$, $b=b_{k+1}$, $c=c_{t-k}$, $d=d_{t-k}$, $(P',Q')=(P^{(k+1)},Q^{(k+1)})$,
$\{V_1,V_2\}=\{U_1^{(k)},U_2^{(k)}\}$, $\{V_1',V_2'\}=\{U_1^{(k+1)},U_2^{(k+1)}\}$,
$\{p_1,\ldots,p_{2\hat{c}}\}=\{p^{(k)}_1,\ldots,p^{(k)}_{2c_k}\}$ and,
$\{q_1,\ldots,q_{2\hat{c}}\}=\{q^{(k)}_1,\ldots,q^{(k)}_{2c_k}\}$. Note that
$\{p_1,\ldots,p_{2\hat{c}}\}\subset \{a_1,\ldots,a_k\}\disjunion\{d_{t+1-k},\ldots,d_t\}$ and $\{q_1,\ldots,q_{2\hat{c}}\}\subset \{b_1,\ldots,b_k\}\disjunion\{c_{t+1-k},\ldots,c_t\}$. Let $P_1$ (resp.$Q_1$) denote the topmost row of $P$ (resp. $Q$). Similarly let $P_1'$ (resp. $Q_1'$) denote the topmost row of $P'$ (resp. $Q'$).

Since $b$ is
less than or equal to all elements of $\{b_1,\ldots,b_k\}$ and $b_i<c_{t+1-i}\ \forall\ i\in\{1,\ldots,t\}$,
it follows that $b\leq$ all elements of $\{b_1,\ldots,b_k\}\disjunion\{c_{t+1-k},\ldots,c_t\}$. Therefore $a<b\leq q_1$, and hence by duality $c>d\geq p_{2\hat{c}}$. We assume inductively that $$T_{(1)}-T_{(2)}\leq
P_1-Q_1,$$ and we need to prove that
$$T_{(1)}-T_{(2)}\leq P'_1-Q'_1.$$ Equivalently, we need to prove that
for all positive integers $z$, $$|(T_{(1)}-T_{(2)})^{\leq z}|
\geq|(P'_1-Q'_1)^{\leq z}|,$$ where we use the definition
$A-B:=A\disjunion (\bN\setminus B)$, where $A$ and $B$ are both
subsets of $\bN$ (see Section 4 of \cite{Kr-bkrs}). 

We consider two cases corresponding to the two ways in which $(P_1',Q_1')$ can be obtained from $(P_1,Q_1)$.

\noindent {\it Case 1}.  $P'_1$ is obtained by $a$ bumping $p_l$
in $P_1$, for some $1\leq l\leq 2\hat{c}$, i.e.,
\[
\begin{array}{l}
P'_1=(P_1\setminus \{p_l\})\disjunion \{a\}\\
Q'_1=(Q_1\setminus \{q_{2\hat{c}+1-l}\})\disjunion \{c\}
\end{array}
\]

\noindent (i) The fact that $a$ bumps $p_l$ implies that $a\leq p_l$ and $p_l<b$. Hence $a\leq p_l<b\leq q_1$ and therefore by duality, we have $c\geq q_{2\hat{c}+1-l}>d\geq p_{2\hat{c}}$. This implies that $m(k+1)=m(k)$. For $j\in\{1,\ldots,m(k)\}\setminus\{l\}$, set $C_{k+1,j}^{(1)}=C_{k,j}^{(1)}$ and $C_{k+1,j}^{(2)}=C_{k,j}^{(2)}$ (note that in these cases $p_j^{(k)}=p_j^{(k+1)}$.). We now consider the case when $j=l$. If $l=1$, then set $C_{k+1,l}^{(1)}=\{(a,b)\}$ and $C_{k+1,l}^{(2)}=\{(d,c)\}$. Otherwise consider the dual pair of chains $\{C_{k,l-1}^{(1)},C_{k,l-1}^{(2)}\}$ in $\{U_1^{(k)},U_2^{(k)}\}$. 

By induction hypothesis, there are at most $(l-1)$ elements each in $C_{k,l-1}^{(1)}$ and $C_{k,l-1}^{(2)}$, and there exists an integer $\chi_{k,l-1}(\geq l-1)$ such that the first coordinate of the $\chi_{k,l-1}$-th element of ${(P_{C_{k,l-1}^{(1)-},C_{k,l-1}^{(2)-}},Q_{C_{k,l-1}^{(1)-},C_{k,l-1}^{(2)-}})}^{up}$ is $p_{l-1}$ and, all the entries which occur as first coordinates of elements of $C_{k,l-1}^{(1)}$ form a subset of the set of all entries which occur as first coordinates of elements of ${(P_{C_{k,l-1}^{(1)-},C_{k,l-1}^{(2)-}},Q_{C_{k,l-1}^{(1)-},C_{k,l-1}^{(2)-}})}^{up}_{\leq\chi_{k,l-1}}$.

Say, $C_{k,l-1}^{(1)}=\{(e_1,f_1),\ldots,(e_r,f_r)\}$ and $C_{k,l-1}^{(2)}=\{(g_r,h_r),\ldots,(g_1,h_1)\}$ where $r\leq l-1$. Therefore $e_1<\cdots<e_r$ and $f_1>\dots>f_r$. It follows from the induction hypothesis $e_1<\cdots<e_r\leq p_{l-1}$. Since $a$ bumps $p_l$, it follows that $a>p_{l-1}$. Hence $e_1<\cdots<e_r<a$. Also $b<f_r$, because $(a,b)$ comes after $(e_r,f_r)$ in the ordered list of elements of $V_1'$.

Therefore $C_{k,l-1}^{(1)}\cup\{(a,b)\}$ is a chain in $V_1'$. We let $C_{k+1,l}^{(1)}$ to be this chain. It follows from duality that $C_{k,l-1}^{(2)}\cup\{(d,c)\}$ is a chain in $V_2'$. We let $C_{k+1,l}^{(2)}$ to be this chain. Note that the dual pair $\{C_{k+1,l}^{(1)},C_{k+1,l}^{(2)}\}$ of chains in $\{U_1^{(k+1)},U_2^{(k+1)}\}$ satisfies the required conditions.

\noindent (ii) For $z<a$ or $p_l\leq z<q_{2\hat{c}+1-l}$ or $z\geq c$,
\begin{equation}\label{e.chd1}
|(T_{(1)}-T_{(2)})^{\leq z}|\geq|(P_1-Q_1)^{\leq
z}|=|(P'_1-Q'_1)^{\leq z}|.
\end{equation}
where the first inequality follows from induction hypothesis, and second equality follows from the facts that $p_{l-1}<a\leq p_l<b\leq q_1\leq q_{2\hat{c}+1-l}\leq c$ and $p_{2\hat{c}}\leq d<c$.

If $a=p_l$, then we are done. Thus we assume that $a<p_l$ (and hence by duality that $c>q_{2\hat{c}+1-l}$). We now need to consider only two possible positions of $z$, namely: $a\leq z<p_l$ and $q_{2\hat{c}+1-l}\leq z<c$. We claim that for $z$ such that $a\leq z<p_l$ or $q_{2\hat{c}+1-l}\leq z<c$,
\begin{equation}\label{e.chd2}
|({(P_{C_{k+1,l}^{(1)-},C_{k+1,l}^{(2)-}},Q_{C_{k+1,l}^{(1)-},C_{k+1,l}^{(2)-}})}^{up}_{(1)}-{(P_{C_{k+1,l}^{(1)-},C_{k+1,l}^{(2)-}},Q_{C_{k+1,l}^{(1)-},C_{k+1,l}^{(2)-}})}^{up}_{(2)})^{\leq z}|\geq |(P'_1-Q'_1)^{\leq z}|.
\end{equation}
Assuming the claim and using the fact that $T\leq {(P_{C_{k+1,l}^{(1)-},C_{k+1,l}^{(2)-}},Q_{C_{k+1,l}^{(1)-},C_{k+1,l}^{(2)-}})}^{up}$ (which is because $\{U_1,U_2\}$ is bounded by $T,\es$), we have that for $z$ such that $a\leq z<p_l$ or $q_{2\hat{c}+1-l}\leq z<c$,
$$|(T_{(1)}-T_{(2)})^{\leq z}|\geq |({(P_{C_{k+1,l}^{(1)-},C_{k+1,l}^{(2)-}},Q_{C_{k+1,l}^{(1)-},C_{k+1,l}^{(2)-}})}^{up}_{(1)}-{(P_{C_{k+1,l}^{(1)-},C_{k+1,l}^{(2)-}},Q_{C_{k+1,l}^{(1)-},C_{k+1,l}^{(2)-}})}^{up}_{(2)})^{\leq z}|$$\ 
$$\geq |(P'_1-Q'_1)^{\leq z}|.$$
This proves the inductive step of (ii). We will now prove the claim.

Note that $C_{k+1,l}^{(1)-}=C_{k+1,l}^{(1)}$ and $C_{k+1,l}^{(2)-}=C_{k+1,l}^{(2)}$. From the proof of (i), we have that $C_{k+1,l}^{(1)}=\{(e_1,f_1),\ldots,(e_r,f_r),(a,b)\}$ and $C_{k+1,l}^{(2)}=\{(d,c),(g_r,h_r),\ldots,(g_1,h_1)\}$ where $e_1<\cdots<e_r<a<p_l<b<f_r<\cdots<f_1$ and $h_1>\cdots>h_r>c>q_{2\hat{c}+1-l}>d>g_r>\cdots>g_1$.

Thus for $a\leq z<p_l$, we have,
$$|({(P_{C_{k+1,l}^{(1)},C_{k+1,l}^{(2)}},Q_{C_{k+1,l}^{(1)},C_{k+1,l}^{(2)}})}^{up}_{(1)}-{(P_{C_{k+1,l}^{(1)},C_{k+1,l}^{(2)}},Q_{C_{k+1,l}^{(1)},C_{k+1,l}^{(2)}})}^{up}_{(2)})^{\leq z}|$$
$$=|(\{topmost\ row\ of\ P_{C_{k+1,l}^{(1)},C_{k+1,l}^{(2)}}\}-\{topmost\ row\ of\ Q_{C_{k+1,l}^{(1)},C_{k+1,l}^{(2)}}\})^{\leq z}|$$
$$=|(\{topmost\ row\ of\ P_{C_{k+1,l}^{(1)},C_{k+1,l}^{(2)}}\}\disjunion(\bN\setminus\{topmost\ row\ of\ Q_{C_{k+1,l}^{(1)},C_{k+1,l}^{(2)}}\}))^{\leq z}|$$
$$=|\{topmost\ row\ of\ P_{C_{k+1,l}^{(1)},C_{k+1,l}^{(2)}}\}^{\leq z}\disjunion\{\bN\}^{\leq z}|\geq \chi_{k+1,l}+z\geq l+z$$

where the last equality (not inequality!) is because $b\leq$ all entries in $Q_{C_{k+1,l}^{(1)},C_{k+1,l}^{(2)}}$ and $a\leq z<p_l<b$ (All the other inequalities and equalities being obvious.). 

Also, $p_1<\cdots<p_{l-1}<a<p_l<b\leq q_1<\cdots<q_{2\hat{c}}$ and $b<c$. Thus for $a\leq z<p_l$,
$$|(P'_1-Q'_1)^{\leq z}|=|(P_1'\disjunion(\bN\setminus Q_1'))^{\leq z}|=|(P_1'\disjunion\bN)^{\leq z}|=l+z$$

Hence we have proved the claim for the case $a\leq z<p_l$. Now for $z$ such that $q_{2\hat{c}+1-l}\leq z<c$, we need to prove the claim, i.e., we need to show that
$$|(\{topmost\ row\ of\ P_{C_{k+1,l}^{(1)},C_{k+1,l}^{(2)}}\}\disjunion(\bN\setminus\{topmost\ row\ of\ Q_{C_{k+1,l}^{(1)},C_{k+1,l}^{(2)}}\}))^{\leq z}|$$
$$\geq |(P_1'\disjunion(\bN\setminus Q_1'))^{\leq z}|$$

Recall that $P_1'=(P_1\setminus\{p_l\})\disjunion\{a\}$ and $Q_1'=(Q_1\setminus\{q_{2\hat{c}+1-l}\})\disjunion\{c\}$. Since $(P',Q')=(P,Q)\la{b,c}a,d$, therefore $d\geq$ all entries of $P'$. Hence $d\geq$ all entries of $P_1'$. On the other hand, since $a<p_l<b$, it follows from duality that $c>q_{2\hat{c}+1-l}>d$. So we have $p_1<\cdots<p_{l-1}<a<p_{l+1}<\cdots<p_{2\hat{c}}\leq d<q_{2\hat{c}+1-l}<c<q_{2\hat{c}+1-(l-1)}<\cdots<q_{2\hat{c}}$. It is now easy to observe that for $z$ such that $q_{2\hat{c}+1-l}\leq z<c$, the number of elements in $Q_1'$ which are $\leq z$ is $2\hat{c}-l$. Hence the number of elements in $\bN\setminus Q_1'$ which are $\leq z$ will be $z-(2\hat{c}-l)$. 

On the other hand, it is also clear that all the elements of $P_1'$ are $\leq z$ and there are $2\hat{c}$ many elements in $P_1'$. Therefore,
$$|(P_1'\disjunion(\bN\setminus Q_1'))^{\leq z}|=2\hat{c}+(z-(2\hat{c}-l))=2\hat{c}+z-2\hat{c}+l=z+l$$

Let $\alpha_1<\cdots<\alpha_{2\tilde{c}}$ and $\beta_1<\cdots<\beta_{2\tilde{c}}$ denote the topmost rows of $P_{C_{k+1,l}^{(1)},C_{k+1,l}^{(2)}}$ and $Q_{C_{k+1,l}^{(1)},C_{k+1,l}^{(2)}}$ respectively. It follows from the definition of $\{C_{k+1,l}^{(1)}, C_{k+1,l}^{(2)}\}$ and from the algorithm of $OBRSK$ applied on the pair of arrays corresponding to $\{C_{k+1,l}^{(1)}, C_{k+1,l}^{(2)}\}$ that $d\geq\alpha_{2\hat{c}}>\cdots>\alpha_1$. On the other hand, since $a<p_l<b$, it follows from duality that $c>q_{2\hat{c}+1-l}>d$. Hence combining all these, we have $c>q_{2\hat{c}+1-l}>d\geq\alpha_{2\tilde{c}}>\cdot\alpha_1$. So for $z$ such that $q_{2\hat{c}+1-l}\leq z<c$, the number of elements in the topmost row of $P_{C_{k+1,l}^{(1)},C_{k+1,l}^{(2)}}$ which are $\leq z$ is $2\tilde{c}$. 

We know from (i) that there exists an integer $\chi_{k+1,l} (\geq l)$ such that the $\chi_{k+1,l}$-th entry (counting from left to right) of the topmost row of $P_{C_{k+1,l}^{(1)},C_{k+1,l}^{(2)}}$ is $p_l^{(k+1)}=a$. Hence it follows from duality that the backward $\chi_{k+1,l}$-th entry (i.e., the $\chi_{k+1,l}$-th entry counting from right to left) of the topmost row of $Q_{C_{k+1,l}^{(1)},C_{k+1,l}^{(2)}}$ is $c$. Therefore for $z$ such that $q_{2\hat{c}+1-l}\leq z<c$, the number of elements in the topmost row of $Q_{C_{k+1,l}^{(1)},C_{k+1,l}^{(2)}}$ which are $\leq z$ is equal to $X_0$ where $X_0$ is some non-negative integer such that $X_0\leq 2\tilde{c}-\chi_{k+1,l}$. But $\chi_{k+1,l}\geq l$, hence $-\chi_{k+1,l}\leq -l$ and therefore $X_0\leq 2\tilde{c}-\chi_{k+1,l}\leq 2\tilde{c}-l$.

Therefore, the number of elements in $(\bN\setminus\{topmost\ row\ of\ Q_{C_{k+1,l}^{(1)},C_{k+1,l}^{(2)}}\})$ which are $\leq z$ is $z-X_0$. Hence,
$$|(\{topmost\ row\ of\ P_{C_{k+1,l}^{(1)},C_{k+1,l}^{(2)}}\}\disjunion(\bN\setminus\{topmost\ row\ of\ Q_{C_{k+1,l}^{(1)},C_{k+1,l}^{(2)}}\}))^{\leq z}|$$
$$=2\tilde{c}+z-X_0\geq 2\tilde{c}+z-(2\tilde{c}-l)=z+l=|(P_1'\disjunion(\bN\setminus Q_1'))^{\leq z}|$$. This proves the claim in case 1.

\noindent {\it Case 2}. $P_1'$ is obtained by adding $a$ to $P_1$ in position $l$ from the left and adding $d$ to $P_1$ at the rightmost end (after $p_{2\hat{c}}$), $Q_1'$ is obtained from $Q_1$ by adding $b$ to the leftmost end of $Q_1$ and adding $c$ at the backward $l$-th position of $Q_1$. That is, 
$$P_1'=P_1\disjunion\{a,d\}=\{p_1,\ldots,p_{l-1},a,p_l,\ldots,p_{2\hat{c}},d\}\ and$$
$$Q_1'=Q_1\disjunion\{b,c\}=\{b,q_1,\ldots,q_{2\hat{c}+1-l},c,q_{2\hat{c}+1-(l-1)},\ldots,q_{2\hat{c}}\}$$
where $p_1<\cdots<p_{l-1}<a<p_l<\cdots<p_{2\hat{c}}<d$ and $b<q_1<\cdots<q_{2\hat{c}+1-l}<c<q_{2\hat{c}+1-(l-1)}<\cdots<q_{2\hat{c}}$.

(i) Since $p_{l-1}<a<b<q_1$, it follows that $m(k)\geq (l-1)$. Note that $a<b\leq p_l$ (since $b>p_l$ would require that $a$ bump $p_l$ in the bounded insertion process.). Thus $m(k+1)=l$.

For $j\in\{1,\ldots,l-1\}$, set $C_{k+1,j}^{(1)}=C_{k,j}^{(1)}$ and $C_{k+1,j}^{(2)}=C_{k,j}^{(2)}$. Consider the dual pair  $\{C_{k,l-1}^{(1)},C_{k,l-1}^{(2)}\}$ of chains in $\{U_1^{(k)},U_2^{(k)}\}$. Say, $C_{k,l-1}^{(1)}=\{(e_1,f_1),\ldots,(e_r,f_r)\}$ and $C_{k,l-1}^{(2)}=\{(g_r,h_r),\ldots,(g_1,h_1)\}$ where $r\leq l-1$. Therefore $e_1<\cdots<e_r$ and $f_1>\cdots>f_r$. It follows from the induction hypothesis that $e_1<\cdots<e_r\leq p_{l-1}$. Since $p_{l-1}<a$, we have $e_1<\cdots<e_r<a$. Also $b<f_r$ because $(a,b)$ comes after $(e_r,f_r)$ in the ordered list of elements of $V_1'$. Therefore $C_{k,l-1}^{(1)}\cup\{(a,b)\}$ is a chain in $V_1'$. We let $C_{k+1,l}^{(1)}$ to be this chain. It follows from duality that $C_{k,l-1}^{(2)}\cup\{(d,c)\}$ is a chain in $V_2'$. We let $C_{k+1,l}^{(2)}$ to be this chain. Note that the dual pair $\{C_{k+1,l}^{(1)},C_{k+1,l}^{(2)}\}$ of chains in $\{U_1^{(k+1)},U_2^{(k+1)}\}$ satisfies the required conditions. 

(ii) Note that $a<b<d<c$. For $z<a$,
$$|(T_{(1)}-T_{(2)})^{\leq z}|\geq|(P_1-Q_1)^{\leq
z}|=|(P'_1-Q'_1)^{\leq z}|$$
where the first inequality follows from induction hypothesis and the second equality follows from the facts that $p_{l-1}<a<p_l$ and $a<b<q_1$. 
For $z$ such that $b\leq z<d$, note that
\begin{align*}
|(P'_1-Q'_1)^{\leq z}|&=|(P'_1\disjunion (\bN\setminus
Q'_1))^{\leq z}|\\
&=|(P'_1)^{\leq z}|+|(\bN\setminus Q'_1)^{\leq
z}|\\
&=(|(P_1)^{\leq z}|+1)+(|(\bN\setminus Q_1)^{\leq
z}|-1)\\
&=|(P_1)^{\leq z}|+|(\bN\setminus Q_1)^{\leq z}|\\
&=|(P_1-Q_1)^{\leq z}|.
\end{align*}
Hence for $b\leq z<d$, we have $$|(T_{(1)}-T_{(2)})^{\leq z}|\geq|(P_1-Q_1)^{\leq
z}|=|(P'_1-Q'_1)^{\leq z}|$$.
For $z\geq c$,
\begin{align*}
|(P'_1-Q'_1)^{\leq z}|&=|(P'_1\disjunion (\bN\setminus
Q'_1))^{\leq z}|\\
&=|(P'_1)^{\leq z}|+|(\bN\setminus Q'_1)^{\leq
z}|\\
&=(|(P_1)^{\leq z}|+2)+(|(\bN\setminus Q_1)^{\leq
z}|-2)\\
&=|(P_1)^{\leq z}|+|(\bN\setminus Q_1)^{\leq z}|\\
&=|(P_1-Q_1)^{\leq z}|.
\end{align*}
Hence for $z\geq c$, we have $$|(T_{(1)}-T_{(2)})^{\leq z}|\geq|(P_1-Q_1)^{\leq
z}|=|(P'_1-Q'_1)^{\leq z}|.$$
It now remains to show that for $z$ such that $a\leq z<b$ or $d\leq z<c$,
$$|(T_{(1)}-T_{(2)})^{\leq z}|\geq |(P'_1-Q'_1)^{\leq z}|.$$
We claim that for $z$ such that $a\leq z<b$ or $d\leq z<c$,
$$|(\{topmost\ row\ of\ P_{C_{k+1,l}^{(1)},C_{k+1,l}^{(2)}}\}\disjunion(\bN\setminus\{topmost\ row\ of\ Q_{C_{k+1,l}^{(1)},C_{k+1,l}^{(2)}}\}))^{\leq z}|$$
$$\geq |(P_1'-Q_1')^{\leq z}|$$
Assuming the claim, we are done as in Case 1. We now prove the claim. Let us first consider the case when $a\leq z<b$. Then since $b\leq$ all entries in $Q_{C_{k+1,l}^{(1)},C_{k+1,l}^{(2)}}$, it follows that 
$$|(\{topmost\ row\ of\ P_{C_{k+1,l}^{(1)},C_{k+1,l}^{(2)}}\}\disjunion(\bN\setminus\{topmost\ row\ of\ Q_{C_{k+1,l}^{(1)},C_{k+1,l}^{(2)}}\}))^{\leq z}|$$
$$=|\{topmost\ row\ of\ P_{C_{k+1,l}^{(1)},C_{k+1,l}^{(2)}}\}^{\leq z}\disjunion\{\bN\}^{\leq z}|$$
which in turn is $\geq\chi_{k+1,l}+z\geq l+z$.
Also $p_1<\cdots<p_{l-1}<a<b<q_1$ and $b\leq p_l$. Thus for $z$ such that $a\leq z<b$, 
$$|(P_1'-Q_1')^{\leq z}|=|(P_1'\disjunion(\bN\setminus Q_1'))^{\leq z}|=|(P_1'\disjunion\bN)^{\leq z}|=l+z$$
This proves the claim in the case when $a\leq z<b$.

Now let us consider the case when $d\leq z<c$. Since $d\geq$ all entries of $P_1'$, it follows that $|(P_1')^{\leq z}|=2\hat{c}+2$. On the other hand, since $b\leq p_l$, it follows from duality that $d\geq q_{2\hat{c}+1-l}$. Therefore, the number of elements in $Q_1'$ which are $\leq z$ is $2\hat{c}+1-l+1=2\hat{c}+2-l$. Hence the number of elements in $\bN\setminus Q_1'$ which are $\leq z$ will be $z-(2\hat{c}+2-l)$. Therefore,
$$|(P_1'-Q_1')^{\leq z}|=|(P_1'\disjunion(\bN\setminus Q_1'))^{\leq z}|=(2\hat{c}+2)+z-(2\hat{c}+2-l)=z+l$$.

Let $\alpha_1<\cdots<\alpha_{2\tilde{c}}$ and $\beta_1<\cdots<\beta_{2\tilde{c}}$ denote the topmost rows of $P_{C_{k+1,l}^{(1)},C_{k+1,l}^{(2)}}$ and $Q_{C_{k+1,l}^{(1)},C_{k+1,l}^{(2)}}$ respectively. It follows from the definition of $\{C_{k+1,l}^{(1)},C_{k+1,l}^{(2)}\}$ and from the algorithm of $OBRSK$ applied on the pair of arrays corresponding to $\{C_{k+1,l}^{(1)},C_{k+1,l}^{(2)}\}$ that $d\geq\alpha_{2\tilde{c}}>\cdots>\alpha_1$. Hence for $z$ such that $d\leq z<c$, the number of elements in the topmost row of $P_{C_{k+1,l}^{(1)},C_{k+1,l}^{(2)}}$ which are $\leq z$ is $2\tilde{c}$. 

We know from (i) that there exists an integer $\chi_{k+1,l}(\geq l)$ such that the $\chi_{k+1,l}$-th entry (counting from left to right) of the topmost row of $P_{C_{k+1,l}^{(1)},C_{k+1,l}^{(2)}}$ is $p_l^{(k+1)}=a$. Hence it follows from duality that the backward $\chi_{k+1,l}$-th entry (i.e.,the $\chi_{k+1,l}$-th entry counting from right to left) of the topmost row of $Q_{C_{k+1,l}^{(1)},C_{k+1,l}^{(2)}}$ is $c$.

Therefore, for $z$ such that $d\leq z<c$, the number of elements in the topmost row of $Q_{C_{k+1,l}^{(1)},C_{k+1,l}^{(2)}}$ which are $\leq z$ is equal to $X_0$ where $X_0$ is some non-negative integer such that $X_0\leq 2\tilde{c}-\chi_{k+1,l}$. But $\chi_{k+1,l}\geq l$, and therefore $X_0\leq 2\tilde{c}-\chi_{k+1,l}\leq 2\tilde{c}-l$. Therefore, the number of elements in $(\bN\setminus\{topmost\ row\ of\ Q_{C_{k+1,l}^{(1)},C_{k+1,l}^{(2)}}\})$ which are $\leq z$ is $z-X_0$. Hence, 
$$|(\{topmost\ row\ of\ P_{C_{k+1,l}^{(1)},C_{k+1,l}^{(2)}}\}\disjunion(\bN\setminus\{topmost\ row\ of\ Q_{C_{k+1,l}^{(1)},C_{k+1,l}^{(2)}}\}))^{\leq z}|$$
$$=2\tilde{c}+z-X_0\geq 2\tilde{c}+z-(2\tilde{c}-l)=z+l=|(P_1'-Q_1')^{\leq z}|$$. 
This proves the claim in Case 2. 

So, we are done with the proofs of (i) and (ii) in all possible cases.

\noindent (iii) Set $k=t$ in (ii).

\noindent (iv) Use arguments similar to (i), (ii), and (iii), but
for $\{U_1,U_2\}$ a pair of positive skew-symmetric multisets on $\NN$. Alternatively, one could apply the involution
$L$ to (iii).

\noindent (v) Use (iii), (iv), and the fact that $\{U_1,U_2\}$ is bounded by
$T,W$ if and only if $\{U_1^-,U_2^-\}$ is bounded by $T,\es$ and $\{U_1^+,U_2^+\}$ is
bounded by $\es,W$; and similarly for $OBRSK(\{U_1,U_2\})$.  
\end{proof}
\end{Proof}

\providecommand{\bysame}{\leavevmode\hbox
to3em{\hrulefill}\thinspace}
\providecommand{\MR}{\relax\ifhmode\unskip\space\fi MR }
\providecommand{\MRhref}[2]{%
  \href{http://www.ams.org/mathscinet-getitem?mr=#1}{#2}
} \providecommand{\href}[2]{#2}

\vspace{1em}

\noindent \textsc{School of Mathematics, Tata Institute of Fundamental Research,
Mumbai, INDIA 400005}

\noindent \textsl{Email address}: \texttt{shyama@math.tifr.res.in}


\begin{thebibliography}{10}

\bibitem{Br}
M. Brion, \emph{Lectures on the geometry of flag varieties},
Topics in
  cohomological studies of algebraic varieties, Trends Math., Birkh\"auser,
  Basel, 2005, pp.~33--85.

\bibitem{Ei}
D. Eisenbud, \emph{Commutative algebra}, Graduate Texts in
Mathematics, vol.
  150, Springer-Verlag, New York, 1995.

\bibitem{Fu}
W. Fulton, \emph{Young tableaux}, London Mathematical Society
Student
  Texts, vol.~35, Cambridge University Press, Cambridge, 1997.

\bibitem{Gh-Ra}
S. Ghorpade, K. N. Raghavan, \emph{Hilbert functions of points on
{S}chubert varieties in the symplectic {G}rassmannian}, Trans.
Amer. Math. Soc. \textbf{358} (2006), no.~12, 5401--5423.

\bibitem{He-Tr}
J. Herzog and N.~V. Trung, \emph{Gr\"obner bases and
  multiplicity of determinantal and {P}faffian ideals}, Adv. Math. \textbf{96}
  (1992), no.~1, 1--37.

\bibitem{Ho-Pe}
W.~V.~D. Hodge and D.~Pedoe, \emph{Methods of algebraic geometry.
{V}ol. {I}},
  Cambridge Mathematical Library, Cambridge University Press, Cambridge, 1994.

\bibitem{Ko-Ra}
V. Kodiyalam and K.~N. Raghavan, \emph{Hilbert functions of points
on
  {S}chubert varieties in {G}rassmannians}, J. Algebra \textbf{270} (2003),
  no.~1, 28--54.

\bibitem{Kra1}
C.~Krattenthaler, \emph{On multiplicities of points on {S}chubert
varieties in
  {G}rassmannians}, S\'em. Lothar. Combin. \textbf{45} (2000/01), Art.\ B45c,
  11 pp. (electronic).

\bibitem{Kra2}
\bysame, \emph{On multiplicities of points on {S}chubert varieties
in
  {G}rassmannians {I}{I}}, J. Algebraic Combin. \textbf{22} (2005), 273--288.

\bibitem{Kr}
V.~Kreiman, \emph{{S}chubert classes in the equivariant {K}-theory
and
  equivariant cohomology of the {G}rassmannian}, preprint
  ar{X}iv:math.{A}{G}/0512204.

\bibitem{Kr-La}
\bysame, \emph{Monomial bases and applications for {R}ichardson
and {S}chubert
  varieties in ordinary and affine {G}rassmannians}, Ph.D. thesis, Northeastern
  University, 2003.

\bibitem{Kr-La2}
V. Kreiman and V.~Lakshmibai, \emph{Multiplicities of singular
points in
  {S}chubert varieties of {G}rassmannians}, Algebra, arithmetic and geometry
  with applications (West Lafayette, IN, 2000), Springer, Berlin, 2004,
  pp.~553--563.

\bibitem{Kr-La3}
\bysame, \emph{Richardson varieties in the {G}rassmannian},
Contributions to
  automorphic forms, geometry, and number theory, Johns Hopkins Univ. Press,
  Baltimore, MD, 2004, pp.~573--597.

\bibitem{La-Go}
V.~Lakshmibai and N.~Gonciulea, \emph{Flag varieties}, Hermann,
2001.

\bibitem{La-Ra-Sa}
V.~Lakshmibai, K.~N. Raghavan, and P.~Sankaran, \emph{Equivariant
{G}iambelli
  and determinantal restriction formulas for the {G}rassmannian}, preprint
  ar{X}iv:math.{A}{G}/0506015.

\bibitem{Ra-Up1}
K. N.~Raghavan, S. Upadhyay, \emph{Hilbert functions of points on
{S}chubert varieties in the orthogonal {G}rassmannians}, preprint
  ar{X}iv:0704.0542.

\bibitem{Ra-Up2}
K. N.~Raghavan, S. Upadhyay, \emph{Initial ideals of tangent cones to {S}chubert varieties in orthogonal {G}rassmannians}, preprint
  ar{X}iv:0710.2950, to appear in the \emph{Journal of Combinatorial theory - A series}.

\bibitem{Kr-bkrs}
V.~Kreiman, \emph{Local {P}roperties of {R}ichardson {V}arieties in the {G}rassmannian via a {B}ounded {R}obinson-{S}chensted-{K}nuth {C}orrespondence}, preprint
  ar{X}iv:math.{A}{G}/0511695 v2.

\bibitem{Ri}
R.~W. Richardson, \emph{Intersections of double cosets in
algebraic groups},
  Indag. Math. (N.S.) \textbf{3} (1992), no.~1, 69--77.

\bibitem{Ro-Ze}
J. Rosenthal and A. Zelevinsky, \emph{Multiplicities of points on
  {S}chubert varieties in {G}rassmannians}, J. Algebraic Combin. \textbf{13}
  (2001), no.~2, 213--218.

\bibitem{Sa}
B.~E. Sagan, \emph{The symmetric group}, Graduate Texts in
Mathematics, vol.
  203, Springer-Verlag, New York, 2001.

\bibitem{St}
R.~P. Stanley, \emph{Some combinatorial aspects of the {S}chubert
  calculus}, Combinatoire et repr\'esentation du groupe sym\'etrique (Actes
  Table Ronde CNRS, Univ. Louis-Pasteur Strasbourg, Strasbourg, 1976),
  Springer, Berlin, 1977, pp.~217--251. Lecture Notes in Math., Vol. 579.

\bibitem{Stu}
B. Sturmfels, \emph{Gr\"obner bases and {S}tanley decompositions
of
  determinantal rings}, Math. Z. \textbf{205} (1990), no.~1, 137--144.

\bibitem{Vi}
G.~Viennot, \emph{Une forme g\'eom\'etrique de la correspondance
de
  {R}obinson-{S}chensted}, Combinatoire et repr\'esentation du groupe
  sym\'etrique (Actes Table Ronde CNRS, Univ. Louis-Pasteur Strasbourg,
  Strasbourg, 1976), Springer, Berlin, 1977, pp.~29--58. Lecture Notes in
  Math., Vol. 579.

\bibitem{La-We}
V. Lakshmibai and J. Weyman, \emph{Multiplicities of points on
  a {S}chubert variety in a minuscule {G}/{P}}, Adv. Math., \textbf{84(2)}
  (1990), 179--208.

\bibitem{sthesis}
Shyamashree Upadhyay, \emph{Schubert varieties in the {O}rthogonal {G}rassmannian}, Ph. D. Thesis, Chennai Mathematical Institute, \textbf{2008}, http://www.imsc.res.in/~knr

\bibitem{La-Se-II}
V. Lakshmibai and C. S. Seshadri, \emph{Geometry of {G}/{P} - {I}{I} ({T}he work
of {D}e {C}oncini and {P}rocesi and the basis conjectures)}, Proc. Indian
Acad. Sci., \textbf{87}, (1978), 1--54.

\bibitem{ebud}
D. Eisenbud, \emph{Commutative algebra with a view toward algebraic geometry} ,
vol. 150 of Graduate Texts in Mathematics, Springer-Verlag, New York,
(1995).

\end{thebibliography}
\end{document}